\newcommand{\EE}{\bm{E}}
\newcommand{\genmin}{u}		
\newcommand{\shgmin}{\bar u} 
\newcommand{\lvshgmin}{\bar u} 
\newcommand{\mgenmin}{v^\epsilon} 
\newcommand{\ShG}{\mathrm{ShG}}
\newcommand{\Lv}{\mathrm{Lv}}
\newcommand{\GFF}{\textup{GFF}}
\newcommand{\PhiGFF}{\Phi^\epsilon}
\newcommand{\PhitShG}{\Psi^\epsilon}
\newcommand{\MGFFeps}{M^{\GFF_\epsilon}}
\newcommand{\MpmShGeps}{M^{\ShG_\epsilon,t,\pm}}
\title{A coupling for the Liouville and the sinh-Gordon model in the $L^2$ phase
}
\author{Michael Hofstetter \footnote{Department of Mathematics,
Weizmann Institute.  E-mail: {\tt michael.hofstetter@weizmann.ac.il}}
}
\date{}
\begin{document}

\maketitle

\begin{abstract}
Using a stochastic control approach we establish couplings of the Liouville field and the sinh-Gordon field with the Gaussian free field in dimension $d=2$, such that the difference is in a Sobolev space of regularity $\alpha>1$.
The analysis covers the entire $L^2$ phase.
Our main tools are estimates for the short scales of the minimiser of the variational problem and several applications of the Brascamp-Lieb inequality.
\end{abstract}




\section{Introduction}
\subsection{Model and main results}

In this work we study the Liouville model and the sinh-Gordon model on the unit torus $\Omega= \T^2$ in dimension $d=2$ in the $L^2$ phase.
These models are probability measure on $S'(\Omega)$ formally defined as
\begin{equation}
\label{eq:exp-model-formal-density}
d\nu^{\cE}(\phi) \propto \exp\Big[{-\lambda \int_{\Omega} V(\phi_x)dx} \Big] d\nu_m^{\GFF} (\phi) ,
\end{equation}
where $\nu_m^{\GFF}$ is the Gaussian free field on $\Omega$ with mass $m>0$,
i.e., the centred Gaussian measure on $\Omega$ with covariance $(-\Delta +m^2)^{-1}$, $\lambda>0$ and $V(\phi_x) = \exp(\sqrt{\beta}(\phi_x))$ for the Liouville model and $V(\phi_x)= \cosh(\sqrt{\beta}(\phi_x))$ for the sinh-Gordon model.
To distinguish the two models, we use the notation $\cE= \Lv$ and $\cE = \ShG$ to refer to the Liouville and the sinh-Gordon model.
The exponential interaction relates these models to the theory of Gaussian multiplicative chaos, for which the model is non-trivial when $\beta \in (0, 8\pi)$.
Throughout this work, we are mainly concerned with the case $\beta \in (0,4\pi)$, also known as the $L^2$ phase of the Gaussian multiplicative chaos.

To turn the ill-defined expression \eqref{eq:exp-model-formal-density} into a well-defined object,
we use a lattice regularisation of underlying space $\Omega$ given by $\Omega_\epsilon = \Omega\cap \epsilon \Z^2$,
replace the continuum and distribution valued Gaussian free field by the discrete Gaussian free field on $\Omega_\epsilon$
and the non-linearity $V$ by its Wick ordering.
In the present context, this leads to regularised measures with densities on $X_\epsilon = \R^{\Omega_\epsilon}$ that are well-defined for any $\epsilon>0$. 
For the Liouville model we have
\begin{equation}
\label{eq:lv-density}
\nu^{\Lv_\epsilon}(d\phi)
\propto
\exp{\Big[-\lambda \int_{\Omega_\epsilon} \wick{\exp(\sqrt{\beta} \phi_x)}_\epsilon \Big]} d\nu_m^{\GFF_\epsilon} (\phi),
\end{equation}
while for the sinh-Gordon model, this recipe leads to
\begin{equation}
\label{eq:shg-density}
\nu^{\ShG_\epsilon}(d\phi)
\propto
\exp{\Big[-\lambda \int_{\Omega_\epsilon} \wick{\cosh(\sqrt{\beta} \phi_x)}_\epsilon \Big]} d\nu_m^{\GFF_\epsilon} (\phi),
\end{equation}
where the Wick ordering in each case is given by
\begin{equation}
\wick{\exp(\pm \sqrt{\beta} \phi_x)}_\epsilon = \epsilon^{\beta /4\pi} \exp(\pm\sqrt{\beta} \phi_x)
\end{equation}
and the discrete integral is defined by
\begin{equation}
\int_{\Omega_\epsilon} \varphi_x dx = \epsilon^2 \sum_{x\in \Omega_\epsilon} \varphi_x, \qquad \varphi \in X_\epsilon.
\end{equation}

The range of $\beta$ as well as the exponent of $\epsilon>0$ stem from the scaling of the reference measure $\nu_m^{\GFF_\epsilon}$ in \eqref{eq:lv-density} and \eqref{eq:shg-density}.
More precisely, we have, for $\phi\sim \nu_m^{\GFF_\epsilon}$,
\begin{equation}
\label{eq:scaling-gff}
\var(\phi_x) = \frac{1}{2\pi} \log \frac{1}{\epsilon} + O_m(1), \qquad \epsilon \to 0,
\end{equation}
where $O_m(1)$ denotes a function of $\epsilon$ and depending on $m$,
which remains bounded as $\epsilon \to 0$.

Our main focus is to study the relation of non-Gaussian distributions $\nu^{\cE_\epsilon}$ and the Gaussian free field $\nu^{\GFF_\epsilon}$ on all scales and uniformly in the lattice spacing $\epsilon>0$.
To this end, we define the Gaussian process
\begin{equation}
\label{eq:gff-process-eps}
\Phi_t^{\GFF_\epsilon} = \int_t^\infty \big(\dot c_s^\epsilon \big)^{1/2} dW_s^\epsilon , \qquad t\in [0,\infty),
\end{equation}
where $(\dot c_t^\epsilon)_{t\in [0,\infty]}$ is a continuous decomposition of the Gaussian free field covariance $c_\infty^\epsilon= (-\Delta^\epsilon + m^2)^{-1}$ and $W^\epsilon$ is a Brownian motion indexed by $\Omega_\epsilon$ and with quadratic variation $t/\epsilon^2$.
Here, $\Delta^\epsilon \colon X_\epsilon \to X_\epsilon$ is the lattice Laplacian acting on functions $f\in X_\epsilon$ by
\begin{equation}
\big(\Delta^\epsilon f\big) (x) = \epsilon^{-2} \sum_{y\sim x} \big( f(y) - f(x) \big),
\end{equation}
where $x\sim y$ denotes that $x,y \in \Omega_\epsilon$ are nearest neighbours in $\Omega_\epsilon$.
We note that $\Phi_0^{\GFF_\epsilon}$, i.e., the evaluation of the process $(\Phi_t^{\GFF_\epsilon})_{t\in [0,\infty]}$ at $t=0$ is a realisation of the Gaussian free field on $\Omega_\epsilon$.
In this article, we use the Pauli-Villars decomposition, which is given by
\begin{equation}
\label{eq:ct-pauli-villars}
c_t^\epsilon = (-\Delta^\epsilon + m^2 + 1/t)^{-1}.
\end{equation}
Note that, as $\epsilon \to 0$ and $x_\epsilon \in \Omega_\epsilon$,
\begin{equation}
\label{eq:ct-asymptotics}
c_t^\epsilon(x,x) \sim \frac{1}{2\pi} \log \frac{L_t}{\epsilon} + O_m(1),
\end{equation}
where $O_m(1)$ denotes a function which remains bounded as $\epsilon \to 0$, and where $L_t = \sqrt{t} \wedge 1/m$.

Our main result is a coupling between the Gaussian process $(\Phi_t^{\GFF_\epsilon})_{t\in [0,\infty]}$ and an analogous process $(\Phi_t^{\cE_\epsilon})_{t\in [0,\infty]}$ satisfying $\Phi_0^{\cE_\epsilon} \sim \nu^{\cE_\epsilon}$, such that their difference, below denoted as $(\Phi_t^{\Delta_\epsilon})_{t\in [0,\infty]}$,
is a bounded and regular random field with values in a Sobolev space $H^\alpha$ of regularity $\alpha>1$.
Since this statement holds uniformly in the lattice spacing, we are able to establish a continuum version of the coupling.
While the statements are identical for the Liouville model and the sinh-Gordon model,
the proof differs in these two cases at various steps.
We write $\cE$ when results hold for both Liouville and sinh-Gordon,
and we write $\Lv$ and $\ShG$ explicitely otherwise.
In the statement below, we denote by $C_0([0,\infty), S)$ the space of continuous processes on $[0,\infty)$ with values in a metric space $S$ vanishing at $\infty$.

\begin{theorem}
\label{thm:coupling-pphi-to-gff-eps}
Let $\beta \in (0, 4\pi)$.
For $\cE \in \{\Lv, \ShG\}$ and $\epsilon>0$,
there exists a process $\Phi^{\cE_\epsilon} \in C_0([0,\infty), H^{-\kappa})$ for any $\kappa >0$ such that
\begin{equation}
\label{eq:coupling-pphi-to-gff-eps}
\Phi_t^{\cE_\epsilon}
= \Phi_t^{\Delta_\epsilon}
+ \Phi_t^{\GFF_\epsilon}, \qquad  \Phi_0^{\cE_\epsilon} \sim \nu^{\cE_\epsilon} ,
\end{equation}
where the difference field $\Phi^{\Delta_\epsilon}$ satisfies, for any $t_0> 0$,
\begin{align}
\label{eq:phi-delta-h1}
 &\sup_{\epsilon >0 } \sup_{t\geq 0} \E [\|\Phi_t^{\Delta_\epsilon}\|_{H^{1}}^2] < \infty,
 \\
\label{eq:phi-delta-h2}
&\sup_{\epsilon>0} \sup_{t\geq t_0} \E [\|\Phi_t^{\Delta_\epsilon}\|_{H^{2}}^2] < \infty,
\end{align}
and moreover, for any $\alpha \in [1,2- \beta/4\pi)$
\begin{align} 
\label{eq:phi-delta-1-2}
&\sup_{\epsilon >0 } \sup_{t\geq 0} \E [\|\Phi_t^{\Delta_\epsilon}\|_{H^\alpha}] < \infty, \\
\label{eq:phi-delta-to-0}
&\sup_{\epsilon>0} \E[ \| \Phi_0^{\Delta_\epsilon}-\Phi_t^{\Delta_\epsilon}\|_{H^\alpha} ]\to 0 \text{~as~} t\to 0.
\end{align}  
Finally, for any $t>0$, $\Phi^{\GFF_\epsilon}_0-\Phi_t^{\GFF_\epsilon}$ is independent of $\Phi_t^{\cE_\epsilon}$.
\end{theorem}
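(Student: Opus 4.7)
The plan is to use the Boué--Dupuis variational representation of the normalising constant of $\nu^{\cE_\epsilon}$ along the Pauli--Villars decomposition $(\dot c_s^\epsilon)_{s\geq 0}$ from \eqref{eq:ct-pauli-villars}. Writing $F(\phi)=\lambda \int_{\Omega_\epsilon} \wick{V(\phi_x)}_\epsilon$, one obtains a stochastic control problem whose minimiser $\genmin^\epsilon=(\genmin^\epsilon_s)_{s\geq 0}$ is adapted to the filtration generated by the Brownian increments on $[s,\infty)$ (the natural ``scale-by-scale'' adaptedness for this type of construction). I would then define
\begin{equation*}
\Phi_t^{\cE_\epsilon} := \Phi_t^{\GFF_\epsilon} + \Phi_t^{\Delta_\epsilon}, \qquad \Phi_t^{\Delta_\epsilon} := \int_t^\infty \dot c_s^\epsilon\, \genmin^\epsilon_s\, ds,
\end{equation*}
so that \eqref{eq:coupling-pphi-to-gff-eps} and $\Phi_0^{\cE_\epsilon}\sim\nu^{\cE_\epsilon}$ follow from the standard Girsanov/Boué--Dupuis argument. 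Path continuity of $\Phi^{\cE_\epsilon}$ in $H^{-\kappa}$ and decay at infinity are inherited from $\Phi^{\GFF_\epsilon}$ together with the eventual smoothness of $\Phi^{\Delta_\epsilon}$ established below.

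The independence claim is immediate from this construction: $\Phi_t^{\GFF_\epsilon}=\int_t^\infty(\dot c_s^\epsilon)^{1/2}dW_s^\epsilon$ is $\sigma(W_r^\epsilon-W_t^\epsilon:r\geq t)$-measurable, and by adaptedness of $\genmin^\epsilon_s$ for $s\geq t$ so is $\Phi_t^{\Delta_\epsilon}$. Hence $\Phi_t^{\cE_\epsilon}$ depends only on the Brownian increments past time $t$, while $\Phi_0^{\GFF_\epsilon}-\Phi_t^{\GFF_\epsilon}=\int_0^t(\dot c_s^\epsilon)^{1/2}dW_s^\epsilon$ is generated by the independent Brownian increments on $[0,t]$.

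The substantial content is in the Sobolev bounds. For \eqref{eq:phi-delta-h1}, the Cameron--Martin-type estimate $\|\Phi_t^{\Delta_\epsilon}\|_{H^1}^2\lesssim \int_t^\infty \|(\dot c_s^\epsilon)^{1/2}\genmin^\epsilon_s\|_{L^2}^2\, ds$ reduces the matter to bounding the variational cost uniformly in $\epsilon$, which is standard in the $L^2$ phase via Jensen's inequality on the partition function. The bound \eqref{eq:phi-delta-h2} uses that, for $s\geq t_0$, $\dot c_s^\epsilon$ carries an additional factor decaying like $L_s^{-2}$ in frequency, trading one derivative for an integrable tail. For the sharp range \eqref{eq:phi-delta-1-2}, I would use the Euler--Lagrange equation for $\genmin^\epsilon$ to express it in terms of $\wick{V'(\Phi_s^{\cE_\epsilon})}_\epsilon$, and invoke the Brascamp--Lieb inequality (comparing to a suitable log-concave auxiliary measure, as announced in the abstract) to get $L^p$-bounds on $\wick{e^{\pm\sqrt\beta\Phi_s^{\cE_\epsilon}}}_\epsilon$ uniform in $\epsilon$. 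Matching the short-scale divergence of this Wick exponential (which is of order $\epsilon^{-\beta/4\pi}$ after differentiation, controlled by the factor $\epsilon^{\beta/4\pi}$ in the Wick ordering up to a loss) against the smoothing of the Pauli--Villars kernel yields exactly the exponent $2-\beta/4\pi$. Finally, \eqref{eq:phi-delta-to-0} is a dominated-convergence consequence of \eqref{eq:phi-delta-1-2}.

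The main obstacle I expect is the Brascamp--Lieb step for the sinh-Gordon model: since $\cosh$ is not convex, the density is not a direct log-concave perturbation of the GFF, so one must decompose $\cosh$ into its two exponential branches and establish two-sided comparison bounds, then recombine them; this is precisely why the author distinguishes $\MpShGeps$ and $\MmShGeps$ in the macros. Threading these estimates so that the resulting regularity is sharp up to $2-\beta/4\pi$ without losing room, uniformly in $\epsilon$, is the technically delicate point of the argument.
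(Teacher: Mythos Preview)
Your overall framework is correct: the Polchinski/Boué--Dupuis construction, the identification of $\Phi^{\Delta_\epsilon}$ as an integrated optimal drift, and the independence argument all match the paper. The $H^1$ and $H^2$ bounds \eqref{eq:phi-delta-h1}--\eqref{eq:phi-delta-h2} do follow from the variational cost bound combined with the smoothing of $\dot c_s^\epsilon$, essentially as you describe.

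Your treatment of the sharp range \eqref{eq:phi-delta-1-2}, however, contains a factual error and a genuine gap. First, $\cosh$ \emph{is} convex, so $v_0^{\ShG_\epsilon}$ is convex and the sinh-Gordon density \emph{is} a log-concave perturbation of the GFF; this convexity propagates to all renormalised potentials $v_t^{\ShG_\epsilon}$ and is used throughout. The actual obstruction for $\ShG$ is that $\sinh$ (the derivative of the potential) has no definite sign, so one cannot compare $\Phi^{\ShG_\epsilon}$ to $\Phi^{\GFF_\epsilon}$ pointwise as in the Liouville case. The paper resolves this not by splitting $\cosh$ into two exponential branches, but by proving that the law of $Y_t^\epsilon + \Phi_t^{\ShG_\epsilon}$ itself has a log-concave density with Hessian bounded below by $-\Delta^\epsilon + m^2$ (a Brascamp--Lieb computation on a convolution), which then controls the $H^{-1+\delta}$ norm of the Wick exponential \emph{under this non-Gaussian law} by the corresponding Gaussian quantity.

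Second, the mechanism you propose for \eqref{eq:phi-delta-1-2}---Euler--Lagrange to write $\genmin_s^\epsilon$ via $\wick{V'}_\epsilon$ and then $L^p$ bounds---is not what the paper does and is not clearly sufficient. The paper instead compares the minimiser with the competitor $(1-h)\genmin^\epsilon$ and uses the elementary monotonicity $-a\sinh(b+a)\leq -a\sinh(b)$ (resp.\ $-ae^{b+a}\leq -ae^b$) to obtain
\[
\E\Big[\int_0^t \|\genmin_s^\epsilon\|_{L^2}^2\,ds \Bigm| \cF^t\Big] \;\leq\; \sqrt{\beta}\,\E\Big[-\int_{\Omega_\epsilon} I_t^\epsilon(\genmin^\epsilon)\,\wick{V'(Y_t^\epsilon+\Phi_t^{\cE_\epsilon})}_\epsilon\,dx \Bigm| \cF^t\Big],
\]
then applies $H^{1-\delta}/H^{-1+\delta}$ duality and Cauchy--Schwarz to close this as $\E[\int_0^t\|\genmin_s^\epsilon\|_{L^2}^2\,ds\mid\cF^t]^{1/2}\lesssim t^{\delta/2}\cW_{\delta,t}$ for $\delta\in(0,1-\beta/4\pi)$. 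The bound \eqref{eq:phi-delta-1-2} then comes from a dyadic decomposition $t_n=2^{-n}$ and the triangle inequality, summing $\big((t_n-t_{n+1})/t_{n+1}^\alpha\big)^{1/2}t_n^{\delta/2}$; choosing $\delta>\alpha-1$ makes the series geometric. Your proposal omits both the competitor/monotonicity step (which replaces the Euler--Lagrange identity you invoke---the paper in fact flags that identity as only conjectural) and the dyadic summation. Likewise \eqref{eq:phi-delta-to-0} is proved by the same summation with $\tau_n=t\,2^{-n}$, yielding a bound $O(t^{(1-\alpha+\delta)/2})$, not by dominated convergence from \eqref{eq:phi-delta-1-2}.
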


\begin{corollary}
\label{cor:pphi-coupling-continuum}
Let $\beta\in (0,4\pi)$. There exists a process $\Phi^{\cE_0} \in C_0([0,\infty), H^{-\kappa})$ for every $\kappa>0$ such that, for any $t\geq 0$,
\begin{equation}
\label{eq:coupling-pphi-to-gff-cont}
\Phi_t^{\cE_0}
= \Phi_t^{\Delta_0}
+ \Phi_t^{\GFF_0},
\end{equation}
where $\Phi_0^{\cE_0}$ is distributed as the  continuum  Liouville measure (for $\cE = \Lv$) and sinh-Gordon measure (for $\cE= \ShG$),
and $\Phi_0^{\GFF_0}$ is distributed as the continuum Gaussian free field on $\Omega$.
For the difference field $\Phi^{\Delta_0}$,
the analogous estimates as for $\Phi_t^{\Delta_\epsilon}$ in Theorem \ref{thm:coupling-pphi-to-gff-eps} hold in the continuum Sobolev spaces. 
Finally, for any $t>0$, $\Phi_0^\GFF- \Phi_t^\GFF$ is independent of $\Phi_t^\cE$.
\end{corollary}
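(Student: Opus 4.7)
The plan is to obtain the continuum coupling as a subsequential weak limit, along $\epsilon_n\downarrow 0$, of the lattice couplings produced by Theorem \ref{thm:coupling-pphi-to-gff-eps}. Viewing each lattice field as a distribution on $\Omega$ through a fixed embedding on scale $\epsilon_n$ (e.g.\ piecewise-constant extension, or convolution with a smooth mollifier of scale $\epsilon_n$), I would regard the joint law of the triple $(\Phi^{\GFF_{\epsilon_n}},\Phi^{\Delta_{\epsilon_n}},\Phi^{\cE_{\epsilon_n}})$ as a Borel probability measure on
\[
C_0([0,\infty),H^{-\kappa})\times C_0([0,\infty),H^{\alpha'})\times C_0([0,\infty),H^{-\kappa})
\]
for some $\kappa>0$ and a fixed $\alpha'\in [1,2-\beta/4\pi)$, and then extract a convergent subsequence.

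The first step is tightness. Tightness of the first marginal in $C_0([0,\infty),H^{-\kappa})$ is classical given the Pauli--Villars decomposition \eqref{eq:ct-pauli-villars} and standard estimates on its kernel. Tightness of $\Phi^{\Delta_{\epsilon_n}}$ is the crucial new input: the uniform moment bounds \eqref{eq:phi-delta-h1} and \eqref{eq:phi-delta-1-2} together with the compact embedding $H^\alpha\hookrightarrow H^{\alpha'}$ for $\alpha'<\alpha$ yield tightness of the spatial marginals, while time equicontinuity is upgraded on each interval $[t_0,\infty)$ from \eqref{eq:phi-delta-h2} combined with the semimartingale structure coming from the stochastic-control construction, and \eqref{eq:phi-delta-to-0} controls the behaviour at $t=0$. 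The third marginal is then tight by the pathwise decomposition.

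Along a subsequence one extracts a joint limit $(\Phi^{\GFF_0},\Phi^{\Delta_0},\Phi^{\cE_0})$, and the identity \eqref{eq:coupling-pphi-to-gff-cont} passes to the limit by continuity of addition. The marginal of $\Phi^{\GFF_0}$ is identified with the continuum Pauli--Villars process through convergence of $\dot c_t^\epsilon$ to its continuum analogue. The law of $\Phi_0^{\cE_0}$ is identified with the continuum Liouville, respectively sinh--Gordon, measure using the known $L^2$-phase convergence of the Wick-ordered exponentials $\wick{\mathrm{e}^{\pm\sqrt\beta\phi}}_\epsilon$ to the Gaussian multiplicative chaos for $\beta<4\pi$, which gives convergence of the Radon--Nikodym densities and hence of the tilted measures. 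Independence of $\Phi_0^{\GFF_0}-\Phi_t^{\GFF_0}$ from $\Phi_t^{\cE_0}$ for fixed $t>0$ is preserved under weak convergence of the joint law. All Sobolev-norm bounds on $\Phi^{\Delta_0}$ then follow from the lattice bounds by Fatou and lower semicontinuity of the norms under weak convergence.

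The principal obstacle is the lattice-to-continuum comparison of the Sobolev norms in \eqref{eq:phi-delta-h1}--\eqref{eq:phi-delta-to-0}. One must pick the embedding of $X_\epsilon$ into $S'(\Omega)$ so that (i) the continuum $H^\alpha$-norm of the embedded field is dominated by the discrete $H^\alpha$-norm uniformly in $\epsilon$, (ii) the Pauli--Villars decomposition and the driving Brownian motions appearing in \eqref{eq:gff-process-eps} converge to their continuum analogues under the embedding, and (iii) the embedding is compatible with the decomposition $\Phi^{\cE_\epsilon}=\Phi^{\Delta_\epsilon}+\Phi^{\GFF_\epsilon}$. Convolution with a smooth mollifier of scale $\epsilon$ satisfies all three properties; once this choice is fixed, the remaining arguments are routine, with only modest care required near the endpoint $\alpha=2-\beta/4\pi$, and no new ideas beyond those already in Theorem \ref{thm:coupling-pphi-to-gff-eps} are needed.
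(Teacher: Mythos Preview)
Your overall strategy---tightness along $\epsilon_n\downarrow 0$, extraction of a subsequential limit, and identification of the marginals via convergence of the Wick exponentials---is the same as the paper's. The identification of $\Phi_0^{\cE_0}$ and the transfer of the Sobolev bounds by lower semicontinuity are handled essentially as you describe.

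The substantive difference, and the place where your argument has a gap, is the tightness of $\Phi^{\Delta_\epsilon}$. You aim for tightness in $C_0([0,\infty),H^{\alpha'})$ with $\alpha'\ge 1$ using the moment bounds \eqref{eq:phi-delta-h1}--\eqref{eq:phi-delta-to-0}. The problem is time equicontinuity near $t=0$: the estimate \eqref{eq:phi-delta-to-0} controls $\E[\|\Phi_0^{\Delta_\epsilon}-\Phi_t^{\Delta_\epsilon}\|_{H^\alpha}]$ for each fixed $t$, with a rate that is first moment and of order $t^{(1-\alpha+\delta)/2}<t$, so neither a Kolmogorov criterion nor a direct Markov bound yields control of $\sup_{s<t\le t_0}\|\Phi_s^{\Delta_\epsilon}-\Phi_t^{\Delta_\epsilon}\|$. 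Invoking ``semimartingale structure'' does not help here, since $\Phi^{\Delta_\epsilon}$ is a pure drift with no martingale part, and \eqref{eq:phi-delta-h2} bounds a fixed-time norm rather than increments on $[t_0,\infty)$.

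The paper sidesteps this by proving tightness only in $H^\alpha$ for $\alpha<1$, which suffices because the higher-regularity bounds on the limit are recovered afterwards by Fatou. The modulus of continuity is obtained \emph{pathwise} from the drift representation $\Phi_t^{\Delta_\epsilon}=I_{t,\infty}^\epsilon(\bar u^\epsilon)$ and the deterministic estimate of Lemma~\ref{lem:sobolev-norm-integrated-drift-1},
\[
\|\Phi_t^{\Delta_\epsilon}-\Phi_s^{\Delta_\epsilon}\|_{H^\alpha}^2\lesssim (t-s)^{1-\alpha}\int_0^\infty\|\bar u_\tau^\epsilon\|_{L^2}^2\,d\tau,
\]
whose random constant is bounded in expectation uniformly in $\epsilon$ by Proposition~\ref{prop:lvshg-all-scales-drift}; Arz\`ela--Ascoli then applies directly. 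The paper also uses the Fourier-extension embedding $I_\epsilon$, which is an isometry for the Sobolev norms and makes your point (i) automatic; the mollifier route you propose would require an additional (routine) comparison.
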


\begin{remark}
In dimension $d=2$ these results give a control on H\"older norms of suitable regularity $s\in (0,1)$ defined by
\begin{equation}
\| f \|_{C^s(\Omega)}  \equiv \| f \|_{C^s} = |f|_{C^s}  + \|f\|_{L^\infty}, \quad  |f|_{C^s(\Omega)}= \sup_{x,y \in \Omega, x\neq y} \frac{|f(x)-f(y)|}{|x-y|^s}.
\end{equation}
More precisely, for $s\in (0,1)$ and $\alpha \geq 1+s$, we have by standard Sobolev embeddings
\begin{equation}
\| f \|_{C^s(\Omega)} \lesssim_{\alpha, s} \|  f \|_{H^\alpha(\Omega)} .
\end{equation}
In particular, \eqref{eq:phi-delta-1-2} and \eqref{eq:phi-delta-to-0} give bounds on the regularity and the maximum of the difference field $\Phi_0^{\Delta_\epsilon}$,
which is uniform in the lattice width $\epsilon>0$.
The two main results therefore allow to study the extreme values of the non-Gaussian field $\Phi_0^\cE$ by comparision to the ones of the Gaussian free field.
Following the same proof as in \cite[Section 6]{MR4665719} for the $\cP(\phi)_2$ field,
this then implies that the sequence
\begin{equation}
\max_{\Omega_\epsilon} \Phi_0^{\cE_\epsilon} - m_\epsilon, \qquad m_\epsilon = \frac{1}{\sqrt{2\pi}}\big( 2\log \frac{1}{\epsilon} - \frac{3}{4} \log \log \frac{1}{\epsilon} \big)
\end{equation}
converges in law to a randomly shifted Gumbel distribution.

For $\cE= \Lv$, this result is known for $\beta\in (0,8\pi)$ thanks to \cite[Theorem 1.2]{HofstetterZeitouni2025Liouville},
while for $\cE = \ShG$ it extends the literature.
\end{remark}

\begin{remark}
We believe that the regularity of $\Phi^{\Delta_\epsilon}$ in Theorem \ref{thm:coupling-pphi-to-gff-eps} and Corollary \ref{cor:pphi-coupling-continuum} is not optimal,
and that similar bounds are true in the entire $L^1$ phase.
More precisely, we conjecture that for $\beta \in (0,8\pi)$ the difference field satisifes
\begin{equation}
\sup_{\epsilon>0} \E\big[ \| \Phi_0^{\Delta_\epsilon} \|_{H^\alpha} \big] <\infty
\end{equation}
for all $\alpha \in [1, \alpha^*)$, where $\alpha^*$ depends on $\beta$ and satisfies $\alpha^*>0$ for $\beta\in (0,8\pi)$ and $\alpha^* =  1$ for $\beta= 8\pi$.
For the heuristcs of this conjecture, we refer to Remark \ref{rem:identity-scmall-scale-drift}.
\end{remark}

\begin{remark}
In \cite[Theorem 1.1]{HofstetterZeitouni2025Liouville} it is shown that, for $\beta \in (0,8\pi)$,
the H\"older norms of $\Phi_0^{\Delta}$ of certain regularity depending on $\beta$ are bounded a.s.  
The regularity obtained in this work is quantified in expectation rather than a.s.,
and also the H\"older exponents differs. 
\end{remark}

\subsection{Literature}

The present work continues the analysis of Euclidean field theories in dimension $d=2$ through the lens of multiscale couplings to the Gaussian free field.
This program was initiated in \cite{MR4399156} for the sine-Gordon model, and subsequently continued in \cite{MR4665719} for the $\cP(\phi)_2$ fields and in \cite{HofstetterZeitouni2025Liouville} for the Liouville model in the entire $L^1$ phase.
The main contribution of this work is a coupling for the sinh-Gordon field,
which is not covered by existing results.

Our main tools are a stochastic control formulation, which allows to express the partition function of the measures in \eqref{eq:lv-density} and \eqref{eq:shg-density} as a variational formula,
as well as ideas from the equivalent Polchinski renormalisation group approach.
The interplay between these different points of view was first systematically used in \cite{MR4665719} for the $\cP(\phi)_2$ fields to obtain bounds for the minimiser of the variational problem,
which then imply bounds on the Sobolev norm of the difference field.
The present work follows in large parts the same strategy,
but the main input to obtain the estimates on the minimiser in the present work is different.

The Polchinski renormalisation group approach in the context of quantum field theory was rigorously developed in \cite{MR4303014} and successfully applied to prove a log-Sobolev inequality for the sine-Gordon measure for $\beta < 6\pi$.
The idea to apply a stochastic control approach to quantum field theory originates from \cite{MR4173157}, where this method was used to give a construction of the $\phi_d^4$ field in dimensions $d= 2$ and $d=3$.
For more details on the equivalence of these two approaches, we refer to the survey article \cite{MR4798104}.

Euclidean field theories with exponential interaction were introduced in \cite{MR292433},
and later studied in \cite{Albeverio1973Uniqueness} \cite{MR395578} \cite{MR0456220} \cite{MR692312} for $\beta <4\pi$ using non-probabilistic methods.
The extension to $\beta <8\pi$ was achived subsequently in \cite{MR4528973}.
The probabilistic point of view leads to the theory of Gaussian multiplicative chaos,
which was initiated in \cite{MR829798}.
With our scaling of the Gaussian free field in \eqref{eq:scaling-gff},
the particular value $\beta = 4\pi$ marks the end of the $L^2$ phase of the Gaussian multiplicative chaos,
where this object is typically easier to analyse.

A probabilistic analysis of the Liouville model was given in \cite{MR4054101} for $\beta<4\pi$ and subsequently in \cite{MR4238209},
where the results are extended to $\beta \in (0,8\pi)$.
Both works construct the measure $\nu^\Lv$ as an invariant measure of a certain singular stochastic partial differential equation,
also known as the stochastic quantization equation.
We remark that the stoachstic differential equations originating from the Polchinski renormalisation group approach can also be viewed as regularisation of a stochastic partial differential equation,
which is however different to the stochastic quantisation equation.
The analysis in \cite{MR4054101} and \cite{MR4238209} requires estimates of the multiplicative chaos when seen as a distribution rather than a Borel measure,
which also enter in the proof of Theorem \ref{thm:coupling-pphi-to-gff-eps}.
The reason for the restriction to $\beta <4\pi$ in the present work is ultimately the same as in \cite{MR4054101}.
The key observation in \cite{MR4238209},
which allows to extend the results to the $L^1$ phase,
is to consider the regularity of the Wick exponential under the distribution $\nu^\Lv$ rather than $\nu^\GFF$.
Generalising several technical estimates to our setting, we believe that this idea also improves our method, thereby extending Theorem \ref{thm:coupling-pphi-to-gff-eps} to $\beta<4\pi$ for both $\cE = \{ \Lv, \ShG \}$. 

Additional aspects of the Liouville model, again from a stochastic quantisation perspective,
were studied in \cite{MR4415393} \cite{ALBEVERIO2012602} \cite{MR4054101} \cite{devecchi2022singularintegrationpartsformula} and more recently in \cite{HofstetterZeitouni2025Liouville}.
All these results rely on the fact that the exponential function,
which appears as the derivative of the potential, is positive and increasing.
For the sinh-Gordon model, the analogue appearing is a hyperbolic sine,
which does not have a definite sign.
This adds an additional difficulty to the model, as most of the techniques crafted for the Liouville model cannot be used for the analysis of the sinh-Gordon model.
From a probabilistic point of view the sinh-Gordon measure is more complicated than the Liouville measure,
as it involves two non-independent multiplicative chaoses constructed from the same Gaussian field.
In the proof of Theorem \ref{thm:coupling-pphi-to-gff-eps} for $\cE = \ShG$,
we eliminate both difficulties simultaneously using in a crucial way that the hyperbolic sine is monotonically increasing. 

Recent works on the sinh-Gordon model include \cite{BarashkovDeVecchi2021Elliptic},
where the sinh-Gordon measure for $\beta < 4\pi$ using again stochastic quantisation techniques.
An important and difficult open problem for the sinh-Gordon model, not addressed by the present work, is the construction of a massless limit,
which is formally given by the limit of \eqref{eq:shg-density} with $m=0$ as $\epsilon\to 0$.
The main difficulties come from the Gaussian reference measure being ill-defined when $m=0$.
Partial progress on this problem has been achieved in \cite{Guillarmou2024sinhGordon} and \cite{Barashkov2024SmallDeviations}.


\subsection{Notation}

We write $\Lv$ and $\ShG$ to refer to the Liouville model and the sinh-Gordon model.
To avoid repeating arguments that are similar for $\Lv$ and $\ShG$,
we use $\cE$ in statement that apply for both $\Lv$ and $\ShG$ likewise.

We further use $\EE_c$ to denote expectations with respect to centred Gaussian random variables with covariance matrix $c$.

To discuss asymptotic behaviour, we use the standard big-$O$ notation and further write $\lesssim$, when an estimate holds up to a deterministic constant.
To emphasise that the dependence of a parameter $a$ say, we also write $\lesssim_a$.

We mostly study random fields with values in $X_\epsilon$.
Note that any $f\in X_\epsilon$ admits a Fourier series
\begin{equation}
f(x) = \sum_{k\in \Omega_\epsilon^*} \hat f(k) e^{ik\cdot x}, \qquad x\in \Omega_\epsilon,
\end{equation}
where $\Omega_\epsilon^* = \{ k \in 2\pi \Z^2 \colon -\pi/\epsilon \leq k_i <\pi/\epsilon  \}$ is the Fourier dual of $\Omega_\epsilon$ and $\hat f(k)\in \C$ denotes the $k$-th Fourier coefficients defined by
\begin{equation}
\hat f(k) = \epsilon^2 \sum_{x\in \Omega_\epsilon} f(x) e^{-ik\cdot x}.
\end{equation}
For $\alpha \in \R$, we define the discrete Sobolev norm through
\begin{equation}
\| f \|_{H^\alpha(\Omega_\epsilon)}^2 = \sum_{k\in \Omega_\epsilon} (1+ |k|^2| )^\alpha |\hat f(k)|^2 .
\end{equation}
When it is clear from the context, we omit the dependence on $\epsilon>0$ and write $H^\alpha (\Omega_\epsilon) \equiv H^\alpha$.
We further note that, for $f,g\in \Omega_\epsilon$,
\begin{equation}
\Big| \int_{\Omega_\epsilon} f(x) g(x) dx \Big| \leq \|f \|_{H^{\alpha}(\Omega_\epsilon)} \| g\|_{H^{-\alpha}(\Omega_\epsilon)},
\end{equation}
which follows from the orthogonality of the discrete exponentials.

\section{Stochastic control formulation}

The main tool for the proof of Theorem \ref{thm:coupling-pphi-to-gff-eps} is a variational formulation for the partition function of the measures \eqref{eq:lv-density} and \eqref{eq:shg-density} and its closely related representation as a process indexed by scale,
which comes from the Polchinski renormalisation group approach.
In this section, we briefly introduce the two concepts and state the key estimates that are used to deduce the Sobolev norm estimates in Theorem \ref{thm:coupling-pphi-to-gff-eps}.

\subsection{Polchinski renormalisation group approach and variational formulation}

In what follows, we denote by $v_0^{\cE_\epsilon}$ the potential of the measure \eqref{eq:lv-density} and \eqref{eq:shg-density},
i.e., for concreteness, we have
\begin{equation}
v_0^{\cE_\epsilon}(\phi) = \int_{\Omega_\epsilon} \wick{V(\phi_x)}_\epsilon dx ,
\end{equation}
where $V(\phi_x)= \exp(\sqrt{\beta}\phi_x)$ for $\cE = \Lv$ and $V(\phi_x)= \cosh(\sqrt{\beta}\phi_x)$ for $\cE = \ShG$.
With this notation the partition function of the measures $\nu^{\cE_\epsilon}$ is given by
\begin{equation}
Z^{\cE_\epsilon} = - \log \EE_{c_\infty^\epsilon}[e^{-v_0^{\cE_\epsilon}(\zeta)}].
\end{equation}
For $t\geq 0$, let $v_t^{\cE_\epsilon}$ be the renormalised potential defined through
\begin{equation}
\label{eq:lvshg-renormalised-potential}
e^{-v_t^{\cE_\epsilon} (\phi)} = \EE_{c_t^\epsilon} [e^{-v_0^{\cE_\epsilon}(\phi + \zeta)}], \qquad c_t^\epsilon = \int_0^t \dot c_s^\epsilon ds
\end{equation}
and note that $v_t^{\cE_\epsilon} \colon X_\epsilon \to \R$.
We remark that $t\geq 0$ is to be viewed as a scale parameter,
and that $c_t^\epsilon$ is the covariance of the small scale field of the Gaussian free field.
Thus, the expectation value in \eqref{eq:lvshg-renormalised-potential} can be seen as an averaging of the partition function of the original measure only over the short scale part of the reference measure.
This gives $v_t^{\cE_\epsilon}$ the interpretation of a potential of a renormalised version of $\nu^{\cE_\epsilon}$.
For more details we refer to \cite[Section 2.1]{MR4303014} and \cite[Section 2.1]{HofstetterZeitouni2025Liouville}.

An important quantity throughout this article is the gradient of $v_t^{\cE_\epsilon}$ with respect to the field $\phi\in X_\epsilon$.
Note that by differentiating \eqref{eq:lvshg-renormalised-potential} we have
\begin{equation}
\label{eq:renormalised-gradient-ratio-of-expectations}
\nabla v_t^{\cE_\epsilon} (\phi) 
=
\frac{\EE_{c_t^\epsilon}[\nabla v_0^{\cE_\epsilon}(\phi + \zeta)e^{-v_0^{\cE_\epsilon}(\phi + \zeta)}]}{\EE_{c_t^\epsilon}[e^{-v_0^{\cE_\epsilon}(\phi+\zeta)}]}.
\end{equation}
Here, the gradient with respect to the field is understood in the Fr\'echet sense with respect to the normalised inner product on $X_\epsilon$.
In particular, we have, for $x\in \Omega_\epsilon$
\begin{equation}
\partial_{\phi_x} v_0^{\cE_\epsilon} (\phi) = \sqrt{\beta} \wick{\exp(\sqrt{\beta} \phi_x)}_\epsilon .
\end{equation}

For $\cE= \{ \Lv, \ShG \}$, we have that the renormalised potential satisfies the a high-dimensional partial differential equation, to which we refer as the Polchinski-PDE, i.e., for $\epsilon>0$ and $v_t^{\cE_\epsilon}$ as in \eqref{eq:lvshg-renormalised-potential}, we have
\begin{align}
\label{eq:polchinski-pde}
\partial_t v_t^{\cE_\epsilon} &= \frac{1}{2} \Delta_{\dot c_t^\epsilon} v_t^{\cE_\epsilon} - \frac{1}{2} (\nabla v_t^{\cE_\epsilon})_{\dot c_t^\epsilon}^2 ,
\end{align}
where for a function $f\colon X_\epsilon \to \R$ the scale dependent differential operators in \eqref{eq:polchinski-pde} are defined by
\begin{align}
\Delta_{\dot c_t^\epsilon} f
&= \epsilon^4 \sum_{x,y} \dot c_t^\epsilon (x,y) \partial_{\phi_x} \partial_{\phi_y} f
\nnb
(\nabla f)_{\dot c_t^\epsilon}^2
&=
\epsilon^4 \sum_{x,y} \dot c_t^\epsilon (x,y) (\partial_{\phi_x}f ) ( \partial_{\phi_y} f ) .
\end{align}

The Polchinski-PDE corresponds to the backward stochastic differential equation, henceforth referred to as Polchinski-SDE,
\begin{equation}
\label{eq:polchinski-backward-sde-infty}
d\Phi_t^\epsilon = - \dot c_t^\epsilon \nabla v_t^{\cE_\epsilon}(\Phi_t^\epsilon) dt
+ (\dot c_t^\epsilon)^{1/2} dW_t^\epsilon,
\qquad \Phi_\infty = 0,
\end{equation}
which can be formally solved to obtain processes
$(\Phi_t^{\GFF_\epsilon})_{t\geq 0}$, $(\Phi_t^{\cE_\epsilon})_{t\geq 0}$ and $(\Phi_t^{\Delta_\epsilon})_{t\geq 0}$ such that for every $t\geq 0$
\begin{equation}
\Phi_t^{\cE_\epsilon} = \Phi_t^{\Delta_\epsilon} + \Phi_t^{\GFF_\epsilon},
\end{equation}
where $\Phi^{\GFF_\epsilon}$ is as in \eqref{eq:gff-process-eps},
the process $\Phi^{\cE_\epsilon} $ is non-Gaussian with $\Phi_0^{\cE_\epsilon}$ being distributed as the $\epsilon$-regularised Liouville field and sinh-Gordon field respectively,
and $\Phi_t^{\Delta_\epsilon} $ is a difference field.
To discuss the convergence of the processes as $\epsilon \to 0$, 
we further assume that all Brownian motions $(W^\epsilon)_{\epsilon>0}$ are obtained from the same cylindrical Brownian motion $W$.
For more details on this construction we refer to \cite[Section 3]{MR4399156}.
The field $\Phi_t^{\Delta_\epsilon}$ is different for $\cE= \Lv$ and $\cE = \ShG$,
but in order to simplify the notation, we omit this dependence from the notation.

The next results make this construction explicit for the measures \eqref{eq:lv-density} and \eqref{eq:shg-density}.
We give its proof in Appendix \ref{app:proof-sde-existence}.
For $\cE= \Lv$, this was achieved in \cite[Theorem 3.1]{HofstetterZeitouni2025Liouville} using Picard iteration even for $\epsilon=0$.
This method does not apply when $\cE= \ShG$, for which we use a different argument in that case, which only gives existence for $\epsilon>0$.

\begin{proposition}
\label{prop:lvshg-coupling-eps}
Let $\beta \in (0,8\pi)$. For $\epsilon>0$ and $\cE \in \{\Lv, \ShG\}$, there are unique $\cF^t$-adapted processes $\Phi^{\cE_\epsilon} \in C_0([0,\infty), X_\epsilon)$ such that for all $t\geq 0$
\begin{equation}
\label{eq:lvshg-coupling-eps}
\Phi_t^{\cE_\epsilon}
= - \int_t^\infty \dot c_s^\epsilon \nabla v_s^{\cE_\epsilon}(\Phi_s^{\cE_\epsilon}) \, ds
+ \Phi_t^{\GFF_\epsilon}
\end{equation}
and $\Phi_t^{\cE_\epsilon}$ is independent of $\Phi_0^{\GFF_\epsilon} - \Phi_t^{\GFF_\epsilon}$.
In addition, we have,
for $\cE = \Lv$,
that $\Phi_t^{\Lv_\epsilon} - \Phi_t^{\GFF_\epsilon} \leq 0$,
while, for $\cE = \ShG$,
we have $\E[\Phi_t^{\ShG_\epsilon}]=0$ for all $t\geq 0$.
\end{proposition}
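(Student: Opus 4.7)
The plan is to view \eqref{eq:lvshg-coupling-eps} as a classical backward SDE on the finite-dimensional space $X_\epsilon \cong \R^{\Omega_\epsilon}$, with smooth drift $\phi \mapsto \dot c_s^\epsilon \nabla v_s^{\cE_\epsilon}(\phi)$ given by the ratio representation \eqref{eq:renormalised-gradient-ratio-of-expectations}. The sign structure of the bare gradient governs the choice of solution scheme: $\nabla v_0^{\Lv_\epsilon}(\phi)(x) = \sqrt{\beta}\wick{\exp(\sqrt{\beta}\phi_x)}_\epsilon \geq 0$ is pointwise non-negative, while $\nabla v_0^{\ShG_\epsilon}(\phi)(x) = \sqrt{\beta}\wick{\sinh(\sqrt{\beta}\phi_x)}_\epsilon$ is sign-changing but odd in $\phi$. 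Both properties are inherited by $\nabla v_t^{\cE_\epsilon}$ through \eqref{eq:renormalised-gradient-ratio-of-expectations}, and this dichotomy forces two different existence arguments.

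For $\cE = \Lv$ the existence and uniqueness are essentially those of \cite[Theorem 3.1]{HofstetterZeitouni2025Liouville}, which I would reproduce: a Picard iteration $\Phi^{(n+1)}_t := -\int_t^\infty \dot c_s^\epsilon \nabla v_s^{\Lv_\epsilon}(\Phi_s^{(n)})\,ds + \Phi_t^{\GFF_\epsilon}$ starting from $\Phi^{(0)} := \Phi^{\GFF_\epsilon}$. The positivity of the drift confines all iterates to $\{\phi \leq \Phi^{\GFF_\epsilon}\}$, which together with $L^p$-moment bounds on the Wick exponential (finite since $e^{-v_0^{\Lv_\epsilon}} \leq 1$ and $\beta < 8\pi$) yields a limit in $C_0([0,\infty), X_\epsilon)$. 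Uniqueness is a Gr\"onwall estimate using local Lipschitz continuity of $\nabla v_s^{\Lv_\epsilon}$, and the bound $\Phi_t^{\Lv_\epsilon} - \Phi_t^{\GFF_\epsilon} \leq 0$ is immediate from the integral equation and pointwise non-negativity of $\dot c_s^\epsilon \nabla v_s^{\Lv_\epsilon}$.

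For $\cE = \ShG$ the lack of a one-sided drift destroys the monotone Picard structure, and I would instead use a truncation plus compactness argument. For each $T > 0$, solve the truncated backward SDE on $[0,T]$,
\[
\Phi_t^{(T)} = -\int_t^T \dot c_s^\epsilon \nabla v_s^{\ShG_\epsilon}(\Phi_s^{(T)})\, ds + \Phi_t^{\GFF_\epsilon} - \Phi_T^{\GFF_\epsilon},
\]
by Picard--Lindel\"of together with a non-explosion estimate based on $|\wick{\sinh(\sqrt{\beta}\phi_x)}_\epsilon| \leq \wick{\cosh(\sqrt{\beta}\phi_x)}_\epsilon$ and a polynomial-growth bound on the ratio \eqref{eq:renormalised-gradient-ratio-of-expectations} obtained from finite-dimensional Gaussian integration, with constants that blow up in $\epsilon$ but are harmless at fixed $\epsilon > 0$ (this is precisely where the restriction to $\epsilon > 0$ enters). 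Sending $T \to \infty$ and extracting a weak limit in $C([0,M], X_\epsilon)$ for every $M$ from tightness based on the same drift bound yields a solution $\Phi^{\ShG_\epsilon} \in C_0([0,\infty), X_\epsilon)$; uniqueness follows from local Lipschitz estimates on bounded sets combined with the a priori tightness just established.

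The structural claims are then clean. The $\cF^t$-adaptedness of $\Phi^{\cE_\epsilon}$ and the independence of $\Phi_t^{\cE_\epsilon}$ from $\Phi_0^{\GFF_\epsilon} - \Phi_t^{\GFF_\epsilon}$ follow from the construction, since the drift integral over $[t,\infty)$ is measurable with respect to $\sigma(W_u - W_t : u \geq t)$, which is independent of the pre-$t$ Brownian increments. The zero-mean property $\E[\Phi_t^{\ShG_\epsilon}] = 0$ is an odd-symmetry argument: evenness of $v_t^{\ShG_\epsilon}$ (since $\cosh$ is even) makes $\nabla v_t^{\ShG_\epsilon}$ odd, so $-\Phi^{\ShG_\epsilon}$ driven by $-W$ (equal in law to $W$) solves the same SDE, and uniqueness gives $\Phi_t^{\ShG_\epsilon} \stackrel{d}{=} -\Phi_t^{\ShG_\epsilon}$. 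The main obstacle I anticipate is the uniform-in-$T$ control for the sinh-Gordon truncation: without a comparison principle or one-sided drift there is no reason a priori for $\Phi^{(T)}$ to remain bounded as $T \to \infty$, so everything hinges on the quantitative estimate of the ratio \eqref{eq:renormalised-gradient-ratio-of-expectations}, which only closes at fixed $\epsilon > 0$.
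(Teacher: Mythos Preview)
Your treatment of the Liouville case matches the paper: Picard iteration inherited from \cite{HofstetterZeitouni2025Liouville}, with the sign of the difference field coming from positivity of the entries of $\dot c_t^\epsilon$ and of $\nabla v_t^{\Lv_\epsilon}$. The independence argument and the zero-mean argument for $\ShG$ via the symmetry $\Phi \stackrel{d}{=} -\Phi$ are also fine.

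For sinh-Gordon existence, however, your scheme has a gap at the non-explosion step, and it is not the one you flag. The claimed ``polynomial-growth bound on the ratio \eqref{eq:renormalised-gradient-ratio-of-expectations}'' is not uniform in $t$: for each fixed $t>0$ a Laplace-type argument does show that $\nabla v_t^{\ShG_\epsilon}(\phi)$ grows at most linearly in $\phi$, but the slope is of order $(c_t^\epsilon)^{-1} = -\Delta^\epsilon + m^2 + 1/t$, hence blows up like $1/t$ as $t\to 0$, and the product $\dot c_t^\epsilon \nabla v_t^{\ShG_\epsilon}$ inherits a non-integrable $1/t$ factor in its Lipschitz constant. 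A Gr\"onwall argument therefore does not close on $[0,T]$; and at $t=0$ itself the gradient is the genuine $\sinh$, with exponential growth. So neither Picard--Lindel\"of non-explosion down to $t=0$ nor the subsequent tightness step for $T\to\infty$ is justified as written. The obstacle you anticipate (uniform-in-$T$ control) is real, but there is an earlier obstacle at $t\to 0$ that you have not addressed.

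The paper's resolution is different in kind: it replaces the unavailable two-sided growth bound by a one-sided dissipativity estimate. Convexity $\He v_t^{\ShG_\epsilon}\geq 0$ (Lemma~\ref{lem:lvshg-he-v-pos-definite}) together with oddness $\nabla v_t^{\ShG_\epsilon}(0)=0$ gives, via the mean value theorem, $\Phi\cdot\nabla v_t^{\ShG_\epsilon}(\Phi) = \Phi\cdot\He v_t^{\ShG_\epsilon}(\bar\Phi)\,\Phi \geq 0$. This is fed into an It\^o computation for the Lyapunov function $f(t,\Phi) = g_t\,\Phi\,(\dot c_t^\epsilon)^{-1}\,\Phi$, where the weight $(\dot c_t^\epsilon)^{-1}$ is chosen so that the drift contribution from $\nabla v_t^{\ShG_\epsilon}$ acquires a good sign, and the specific choice $g_t = (tm^2+1)^{-2}$ cancels the remaining $\ddot c_t^\epsilon$ term against $g_t'$. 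Stopping times then give both non-explosion and pathwise uniqueness directly on $[0,\infty)$, bypassing simultaneously the $t\to 0$ and the $T\to\infty$ difficulties. The missing ingredient in your proposal is precisely this use of convexity of the renormalised potential as a substitute for a growth bound.
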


For the following result, we recall the notion of the renormalised measure $\nu_t^{\cE_\epsilon}$ on $X_\epsilon$,
which is defined by
\begin{equation}
\label{eq:lvshg-renormalised-measure}
\E_{\nu_t^{\cE_\epsilon}} \big[  F \big] = e^{v_0^{\cE_\epsilon}(0)} \EE_{c_\infty^\epsilon - c_t^\epsilon} \big[ F(\zeta) e^{-v_t^{\cE_\epsilon}(\zeta)} \big].
\end{equation}
As shown in \cite[Proposition 2.1]{MR4303014} the density is given by
\begin{equation}
\label{eq:lvshg-density-renormalised-measure}
d\nu_t^{\cE_\epsilon} (\phi) \propto e^{-v_t^{\cE_\epsilon}(\phi)} d\nu_t^{\GFF_\epsilon}(\phi),
\end{equation}
where $\nu_t^{\GFF_\epsilon}$ is the law of the centred Gaussian measure on $X_\epsilon$ with covariance $c_\infty^\epsilon - c_t^\epsilon$.

\begin{proposition}
\label{prop:law-polchinski-sde}
Let $\epsilon>0$ and $\cE \in \{ \Lv, \ShG \}$ and let $\Phi^{\cE_\epsilon}$ be the solutions of \eqref{eq:lvshg-coupling-eps}.
Then $\Phi_t^{\cE_\epsilon} \sim \nu_t^{\cE_\epsilon}$ for all $t\geq 0$.
\end{proposition}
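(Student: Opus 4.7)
The plan is to show that the law $\sigma_t$ of $\Phi_t^{\cE_\epsilon}$ and the measure $\nu_t^{\cE_\epsilon}$ solve the same weak evolution equation in $t$ on the finite-dimensional space of probability measures on $X_\epsilon$, with a common terminal value $\delta_0$ at $t = \infty$, and then conclude by uniqueness.

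The key algebraic input is that, by a short calculation from the Polchinski PDE \eqref{eq:polchinski-pde} together with the identity $\Delta_{\dot c_t^\epsilon} e^{-v_t^{\cE_\epsilon}} = (-\Delta_{\dot c_t^\epsilon} v_t^{\cE_\epsilon} + (\nabla v_t^{\cE_\epsilon})_{\dot c_t^\epsilon}^2) e^{-v_t^{\cE_\epsilon}}$, the function $e^{-v_t^{\cE_\epsilon}}$ satisfies the forward heat equation $\partial_t e^{-v_t^{\cE_\epsilon}} = \tfrac12 \Delta_{\dot c_t^\epsilon} e^{-v_t^{\cE_\epsilon}}$ (which is also manifest from the Feynman-Kac representation \eqref{eq:lvshg-renormalised-potential}). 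On the other hand, the Gaussian density $\rho_t$ with covariance $c_\infty^\epsilon - c_t^\epsilon$ shrinks in $t$, so $\partial_t \rho_t = -\tfrac12 \Delta_{\dot c_t^\epsilon} \rho_t$. Writing $d\nu_t^{\cE_\epsilon} \propto e^{-v_t^{\cE_\epsilon}} \rho_t \, d\phi$, differentiating $\int F e^{-v_t^{\cE_\epsilon}} \rho_t \, d\phi$ in $t$, and integrating by parts twice against the self-adjoint operator $\Delta_{\dot c_t^\epsilon}$ (using that $\EE_{c_\infty^\epsilon - c_t^\epsilon}[e^{-v_t^{\cE_\epsilon}}] = \EE_{c_\infty^\epsilon}[e^{-v_0^{\cE_\epsilon}}]$ is $t$-independent by the tower property), one obtains for every smooth test function $F \colon X_\epsilon \to \R$
\[ \partial_t \E_{\nu_t^{\cE_\epsilon}}[F] = \E_{\nu_t^{\cE_\epsilon}}\big[(\nabla F)_{\dot c_t^\epsilon}(\nabla v_t^{\cE_\epsilon}) - \tfrac12 \Delta_{\dot c_t^\epsilon} F\big]. \]

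For the matching equation on $\sigma_t$, a Taylor expansion of $F(\Phi_{t-\delta}^{\cE_\epsilon}) - F(\Phi_t^{\cE_\epsilon})$ using \eqref{eq:lvshg-coupling-eps}, together with the fact that the increment $\int_{t-\delta}^t (\dot c_s^\epsilon)^{1/2} dW_s^\epsilon$ is independent of $\Phi_t^{\cE_\epsilon}$ (adapted to the tail filtration of $W^\epsilon$ beyond $t$), yields in the limit $\delta \downarrow 0$ the identical right-hand side, with $\E_{\sigma_t}$ in place of $\E_{\nu_t^{\cE_\epsilon}}$. At $t = \infty$ both measures coincide with $\delta_0$: $\Phi_\infty^{\cE_\epsilon} = 0$ by the terminal condition, while $c_\infty^\epsilon - c_\infty^\epsilon = 0$ forces $\nu_\infty^{\GFF_\epsilon} = \delta_0$ and hence $\nu_\infty^{\cE_\epsilon} = \delta_0$. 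Since the common evolution equation is a linear first-order weak equation on probability measures on the finite-dimensional lattice space $X_\epsilon$ with smooth coefficients in $(\phi, t)$ (equivalently, the dual backward Kolmogorov equation has unique classical solutions), the common terminal value forces $\sigma_t = \nu_t^{\cE_\epsilon}$ for all $t \geq 0$.

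The main technical obstacle is the Itô-type step for the law of $\Phi_t^{\cE_\epsilon}$: the SDE \eqref{eq:polchinski-backward-sde-infty} is backward, with terminal condition at $t = \infty$, so $\Phi_t^{\cE_\epsilon}$ is adapted to the tail filtration of $W^\epsilon$ rather than its past and the standard forward Itô formula is not directly applicable. I would resolve this by first truncating to a finite interval $[t, T]$, reversing time there so that the restricted dynamics becomes a standard initial-value SDE driven by an equivalent Brownian motion, applying Itô's formula in that reversed variable, and then passing to $T \to \infty$ using the continuity $\Phi^{\cE_\epsilon} \in C_0([0, \infty), X_\epsilon)$ and integrability furnished by Proposition \ref{prop:lvshg-coupling-eps}.
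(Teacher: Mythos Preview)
Your approach is genuinely different from the paper's and the computation that $\nu_t^{\cE_\epsilon}$ satisfies the same weak evolution equation as $\sigma_t$ (via the Polchinski PDE for $e^{-v_t}$ and the heat equation for the Gaussian density) is correct and clean. The paper instead works at the level of processes: it introduces an auxiliary solution $\Phi^{\cE_\epsilon,T}$ of the backward SDE on $[0,T]$ with terminal law $\nu_T^{\cE_\epsilon}$, cites \cite{MR4303014} for $\Phi_t^{\cE_\epsilon,T}\sim\nu_t^{\cE_\epsilon}$, and then proves $\Phi_t^{\cE_\epsilon,T}\to\Phi_t^{\cE_\epsilon}$ in probability as $T\to\infty$ via an It\^o computation with a weighted Lyapunov function, using crucially the convexity $\He v_t^{\cE_\epsilon}\ge 0$ from Lemma~\ref{lem:lvshg-he-v-pos-definite}.

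There is, however, a real gap in your uniqueness step. The assertion that ``smooth coefficients'' force uniqueness of the weak evolution equation with terminal value $\delta_0$ at $t=\infty$ is not justified: the drift $-\dot c_t^\epsilon\nabla v_t^{\cE_\epsilon}$ has exponential growth (so is not globally Lipschitz), the diffusion $\dot c_t^\epsilon$ degenerates as $t\to\infty$, and the time interval is infinite. Standard parabolic well-posedness does not cover this. Your own proposed fix --- truncate to $[t,T]$ and let $T\to\infty$ --- runs into the problem that on $[t,T]$ the two flows have \emph{different} terminal data $\sigma_T$ and $\nu_T^{\cE_\epsilon}$; they only meet in the limit. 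Passing $T\to\infty$ therefore requires continuous dependence of the solution at time $t$ on the terminal datum at time $T$, uniformly in $T$, which is exactly the contraction estimate the paper extracts from the convexity of $v_t^{\cE_\epsilon}$. In short, the technical obstacle is not the backward It\^o step you flag but the stability at $T=\infty$, and closing it requires essentially the same ingredient the paper uses.
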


The proof uses standard arguments from the theory of stochastic differential equations as well as additional ideas to overcome the fact that the coefficients of \eqref{eq:polchinski-backward-sde-infty} are not globally Lipshitz continuous.

We further record the following result on the density $\nu_t^{\cE_\epsilon}$ defined in \eqref{eq:lvshg-renormalised-measure},
which shows that the convexity of the potential $v_0^{\cE_\epsilon}$
implies convexity of the renormalised potential $v_t^{\cE_\epsilon}$ along all scales $t\geq 0$.

\begin{lemma}
\label{lem:lvshg-he-v-pos-definite}
For all $t\geq 0$ we have
\begin{equation}
\He v_t^{\cE_\epsilon} \geq 0.
\end{equation}
In particular $v_t^{\cE_\epsilon}$ is convex.
\end{lemma}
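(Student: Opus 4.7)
The plan is to exploit log-concavity preservation under Gaussian convolution (Prékopa's theorem). First I would observe that the base potential $v_0^{\cE_\epsilon}$ is convex on $X_\epsilon$: for $\cE = \Lv$ it is a positive sum of $\phi_x \mapsto \epsilon^{\beta/4\pi}\exp(\sqrt{\beta}\phi_x)$ and for $\cE = \ShG$ a positive sum of $\phi_x \mapsto \epsilon^{\beta/4\pi}\cosh(\sqrt{\beta}\phi_x)$, both of which are convex scalar functions. In fact $\He v_0^{\cE_\epsilon}$ is diagonal in the site basis with strictly positive entries, since $\exp'' > 0$ and $\cosh'' > 0$.

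Next I would rewrite the renormalisation identity \eqref{eq:lvshg-renormalised-potential} in the form
\begin{equation}
e^{-v_t^{\cE_\epsilon}(\phi)} = \int_{X_\epsilon} e^{-v_0^{\cE_\epsilon}(\phi+\zeta)} \, p_{c_t^\epsilon}(\zeta) \, d\zeta ,
\end{equation}
where $p_{c_t^\epsilon}$ is the Gaussian density with covariance $c_t^\epsilon = (-\Delta^\epsilon + m^2 + 1/t)^{-1}$. Because $c_t^\epsilon$ is strictly positive definite for every $t>0$, $p_{c_t^\epsilon}$ is a bona fide log-concave density on the finite-dimensional space $X_\epsilon$. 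The integrand is log-concave jointly in $(\phi,\zeta)$: the map $(\phi,\zeta)\mapsto v_0^{\cE_\epsilon}(\phi+\zeta)$ is the composition of the convex function $v_0^{\cE_\epsilon}$ with the linear map $(\phi,\zeta)\mapsto \phi+\zeta$, hence convex, and $-\log p_{c_t^\epsilon}(\zeta)$ is a positive semidefinite quadratic form in $(\phi,\zeta)$.

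Applying Prékopa's theorem to marginalise out $\zeta$, the function $\phi\mapsto e^{-v_t^{\cE_\epsilon}(\phi)}$ is log-concave on $X_\epsilon$, equivalently $v_t^{\cE_\epsilon}$ is convex and $\He v_t^{\cE_\epsilon}\geq 0$. For $t=0$, the claim is just the convexity of $v_0^{\cE_\epsilon}$ noted above, and the case $t=\infty$ is vacuous since then the Gaussian is degenerate but $v_\infty^{\cE_\epsilon}$ is a constant. I do not expect any serious obstacle: everything takes place on the finite set $\Omega_\epsilon$, so the Prékopa-Leindler machinery applies verbatim without distributional subtleties. An alternative route would be to differentiate the Polchinski PDE \eqref{eq:polchinski-pde} twice in $\phi$ to obtain a matrix-valued transport-diffusion equation of the form $\partial_t H = \tfrac12 \Delta_{\dot c_t^\epsilon} H - (\nabla v_t)_{\dot c_t^\epsilon}\!\cdot\!\nabla H - H\,\dot c_t^\epsilon\, H$ for $H=\He v_t^{\cE_\epsilon}$, and invoke a maximum principle started from $H_0\geq 0$; this is technically more involved but avoids Prékopa, and the key observation is that the quadratic term $-H\dot c_t^\epsilon H$ only drives $H$ downward, never through zero from above.
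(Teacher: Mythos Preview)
Your Pr\'ekopa argument is correct and is precisely the standard way to obtain this lemma: log-concavity of $e^{-v_0^{\cE_\epsilon}}$ together with log-concavity of the Gaussian density yields log-concavity of the marginal $e^{-v_t^{\cE_\epsilon}}$, hence convexity of $v_t^{\cE_\epsilon}$. The paper does not give a self-contained proof but defers to \cite[Lemma 2.11]{HofstetterZeitouni2025Liouville}, noting only that the argument uses nothing beyond the convexity of $v_0^{\cE_\epsilon}$; your proof matches that description exactly, so the approaches are essentially the same.

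One minor inaccuracy worth flagging: your remark that $v_\infty^{\cE_\epsilon}$ is a constant is not correct --- at $t=\infty$ the covariance $c_\infty^\epsilon=(-\Delta^\epsilon+m^2)^{-1}$ is still nondegenerate, and $v_\infty^{\cE_\epsilon}(\phi)=-\log\EE_{c_\infty^\epsilon}[e^{-v_0^{\cE_\epsilon}(\phi+\zeta)}]$ genuinely depends on $\phi$. This does not affect the lemma (which is stated for $t\geq 0$), and Pr\'ekopa covers $t=\infty$ just as well, so the slip is harmless.
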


\begin{proof}
For $\cE= \Lv$ the result is proved in \cite[Lemma 2.11]{HofstetterZeitouni2025Liouville}.
Since the argument only uses the convexity of $v_0^{\cE_\epsilon}$, the same conclusion holds for $\cE= \ShG$. 
\end{proof}

Next, we discuss the variational formulation of the partition function of $\nu^{\cE_\epsilon}$,
which uses the ideas in \cite{MR1745333} and \cite{MR1675051}, 
and its relation to Proposition \ref{prop:lvshg-coupling-eps} and Proposition \ref{prop:law-polchinski-sde}.
To this end, we continue using the notions of \cite[Section 3]{MR4665719}:
\begin{itemize}
\item For $t>0$,
\begin{equation}
\label{eq:Y-definition}
Y_t^\epsilon = \Phi_0^{\GFF_\epsilon} - \Phi_t^{\GFF_\epsilon} 
\end{equation}
denotes the small scales of the Gaussian process. In particular, $\cov(Y_t^\epsilon) = c_t^\epsilon$.
\item $\cF^t$ is the backward filtration generated by the cylindrical Brownian $W$ motion that drives the Polchinski SDE, i.e.\ $\cF^t = \sigma\big ( \{  W_s \colon s\geq t \}  \big)$.
\item $\HH_a$ denotes the space of progressively measurable (with respect to the backward filtration $(\cF^t)_{t\geq 0}$ processes which are a.s.\ in $L^2(\R^+ \times \Omega_\epsilon)$, i.e.,
$u\in \HH_a$ if and only if $u\mid_{[t,\infty)}$ is $\cB([t,\infty))\otimes \cF^t$ measurable for every $t\geq 0$ and
\begin{equation}
\int_0^\infty \| u_s\|_{L^2(\Omega_\epsilon)}^2 ds < \infty \qquad \text{a.s.,}
\end{equation}
where $\cB([t,\infty))$ denotes the Borel $\sigma$-algrebra on $[t,\infty)$.
For $t\geq 0$ we write $\HH_a[0,t]$ for the restriction of $\HH_a$ to processes on $[0,t]$ and use the convention $\HH_a[0,\infty] = \HH_a$.
We refer to elements in $\HH_a$ and $\HH[0,t]$ as drifts.
\item For $u\in \HH_a$ and $0\leq s \leq t \leq \infty$, $I_{s,t}(u)$ is the integrated drift given by
\begin{align}
I_{s,t}^\epsilon(u) &= \int_s^t q_\tau^\epsilon u_\tau d\tau, \qquad q_\tau^\epsilon = (\dot c_\tau^\epsilon \big)^{1/2},
\end{align}
with the convention $I_{0,t}^\epsilon(u) = I_t^\epsilon(u)$.
\end{itemize}

\begin{proposition}
\label{prop:conditional-boue-dupuis}
Let $\epsilon>0$ and let $\beta\in (0,8\pi)$ and let $\cE\in \{\Lv, \ShG\}$.
Then, for $t\in [0, \infty]$,
\begin{equation}
\label{eq:conditional-boue-dupuis}
 -\log \E \big [e^{-v_0^{\cE_\epsilon}(Y_t^\epsilon +  \Phi_t^{\cE_\epsilon})} \bigm| \cF^t\big]	 
=
 \inf_{u \in \HH_a[0,t]} \E \Big[v_0^{\cE_\epsilon} \big (Y_t^\epsilon + \Phi_t^{\cE_\epsilon} +  I_t^\epsilon(u)\big) + \frac{1}{2} \int_0^t \| u_s\|_{L^2}^2 ds \bigm| \cF^t\Big] \qquad \text{a.s.}
\end{equation}
\end{proposition}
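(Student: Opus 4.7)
The plan is to derive the conditional variational formula by freezing the large-scale part and reducing to the classical Boué-Dupuis identity. The key structural input, from Proposition \ref{prop:lvshg-coupling-eps}, is that $\Phi_t^{\cE_\epsilon}$, being the time-$t$ value of a backward SDE driven by $W_{[t,\infty)}$, is $\cF^t$-measurable. Simultaneously, by \eqref{eq:gff-process-eps} and \eqref{eq:Y-definition}, the small-scale field $Y_t^\epsilon = \int_0^t q_s^\epsilon\, dW_s^\epsilon$ is a stochastic integral against the Brownian motion on $[0,t]$; it is therefore independent of $\cF^t$ and centred Gaussian with covariance $c_t^\epsilon$. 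Conditioning on $\cF^t$ hence amounts to treating $\Phi_t^{\cE_\epsilon}$ as a fixed element $\phi \in X_\epsilon$ while $Y_t^\epsilon$ remains non-trivial.

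Next, I would invoke the classical Boué-Dupuis formula on the finite interval $[0,t]$ applied to the functional $W_{[0,t]} \mapsto v_0^{\cE_\epsilon}(Y_t^\epsilon + \phi)$ with $\phi \in X_\epsilon$ regarded as a deterministic parameter. At positive lattice spacing, $X_\epsilon$ is finite-dimensional, the potential $v_0^{\cE_\epsilon}$ is non-negative since both $\exp$ and $\cosh$ are positive and the Wick prefactor $\epsilon^{\beta/4\pi}$ is a positive rescaling, and Wick-ordered exponentials of Gaussians with bounded variance have all moments, so the standard hypotheses are satisfied. The classical identity yields
\[
-\log \E\bigl[ e^{-v_0^{\cE_\epsilon}(Y_t^\epsilon + \phi)}\bigr]
= \inf_{u \in \HH_a[0,t]} \E \Bigl[ v_0^{\cE_\epsilon}\bigl(Y_t^\epsilon + \phi + I_t^\epsilon(u)\bigr) + \tfrac{1}{2} \int_0^t \|u_s\|_{L^2}^2 \, ds \Bigr].
\]
To upgrade this to the conditional statement, I would substitute $\phi = \Phi_t^{\cE_\epsilon}$. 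Since drifts in $\HH_a[0,t]$ are adapted to the backward filtration and $\cF^s \supseteq \cF^t$ for $s \leq t$, any $\cF^t$-measurable quantity can be incorporated into the drift without leaving the admissible class. A standard measurable-selection argument produces, for each $\eta>0$, a nearly optimal map $\phi \mapsto u^{(\phi)}$; evaluation at $\phi = \Phi_t^{\cE_\epsilon}$ then realises the conditional infimum almost surely up to $\eta$, and letting $\eta \to 0$ closes the identity.

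The main obstacle is the bookkeeping around measurability: one must ensure that the conditional expectations, the infimum over $\HH_a[0,t]$, and the substitution of the random parameter $\Phi_t^{\cE_\epsilon}$ are all mutually compatible, so that the Boué-Dupuis identity holds simultaneously for $\cF^t$-almost every realisation rather than only at deterministic $\phi$. The positivity, smoothness, and controlled growth of $v_0^{\cE_\epsilon}$ in $\phi$ at positive lattice spacing should render the measurable-selection step routine; the subtle issue is the choice of topology on $\HH_a[0,t]$ under which the value function $\phi \mapsto \inf_u \E[\cdots]$ is at least measurable, so that the composition with $\Phi_t^{\cE_\epsilon}$ remains a bona fide random variable.
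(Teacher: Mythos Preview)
Your proposal is correct and follows the same route as the paper: reduce to a deterministic shift $\phi$ by the independence of $Y_t^\epsilon$ from $\cF^t$ and the $\cF^t$-measurability of $\Phi_t^{\cE_\epsilon}$, apply the Bou\'e--Dupuis formula pointwise in $\phi$, then substitute $\phi=\Phi_t^{\cE_\epsilon}$. The only refinement the paper makes explicit is that the classical Bou\'e--Dupuis results require a bounded functional, so it invokes the extension in \cite{MR4389160} whose hypotheses (A1)--(A2) are precisely the non-negativity and finite-moment conditions you verify; conversely, the paper performs the final substitution without discussing the measurable-selection concern you raise.
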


\begin{proof}
Since $Y_t^\epsilon$ is independent of $\Phi_t^{\cE_\epsilon}$ we have by standard properties of conditional expectation 
\begin{align}
\label{eq:exponential-moment-conditioned-on-large-scales}
-\log \E\big[ e^{-v_0^{\cE_\epsilon}(Y_t^{\epsilon} +\Phi_t^{\cE_\epsilon})} \bigm | \cF^t \big]
= -\log \E\big[e^{-v_0^{\cE_\epsilon}(Y_t^{\epsilon}+\phi)}\big]_{\phi=\Phi_t^{\cE_\epsilon}},
\end{align}
and thus, it suffices to show \eqref{eq:conditional-boue-dupuis} for a deterministic $\phi\in X_\epsilon$.
The function $v_0^{\cE_\epsilon} \colon X_\epsilon \to \R$ is unbounded, and thus, the classical results \cite[(1.1)]{MR1745333} and \cite[Theorem 3.1]{MR1675051} do not apply.
Instead, we use the extension proved in \cite[Theorem 1.1]{MR4389160}.
The same conclusion can also be extracted from \cite[Theorem 7]{MR3255483}.
To apply \cite[Theorem 1.1]{MR4389160}, we need to verify the assumptions \cite[(A1) and (A2)]{MR4389160}.
The first assumption (A1) in our case is
\begin{equation}
\E[e^{-v_0^{\cE_\epsilon} (Y_t^\epsilon + \phi)}] < \infty ,
\end{equation}
which holds since $v_0^{\cE_\epsilon}$ is bounded below.
The second assumption reads 
\begin{equation}
\E [v_0^{\cE_\epsilon} (Y_t^\epsilon + \phi)] < \infty,
\end{equation}
which follows from the existence of exponential moments for Gaussian random variables.

Thus, we have that, for a deterministic $\phi\in X_\epsilon$,
\begin{align} 
\label{bd1}
-\log \E\big[ e^{-v_0^{\cE_\epsilon}(Y_t^\epsilon+\phi)} \big] 
&=
\inf_{u \in \mathbb{H}_{a}[0,t]} \E\Big[ v_0^{\cE_\epsilon}\big(Y_t^\epsilon+\phi+I_t^\epsilon(u)\big)+\frac{1}{2}\int_{0}^{t}\|u_s\|^{2}_{L^2(\Omega_{\epsilon})} ds\Big],
\end{align}
which, together with \eqref{eq:exponential-moment-conditioned-on-large-scales}, implies \eqref{eq:conditional-boue-dupuis}.
\end{proof}

\subsection{Existence and estimates for minimisers}

The following results give the existence of a minimiser for $\cE \in \{ \Lv, \ShG \}$ by construcing an explicit drift $\lvshgmin^\epsilon$,
which satisfies the variational formula \eqref{eq:conditional-boue-dupuis},
as well as well as estimates on this particular minimiser.
The proofs of these statements are different for $\cE = \Lv$ and $\cE = \ShG$,
and are therefore presented in Section \ref{ssec:lv-small-scales-drift} and Section \ref{ssec:shg-small-scales-drift}.

\begin{proposition}
\label{prop:bd-polchinski-correspondence}
Let $\beta \in (0,8\pi)$ and $\epsilon > 0$.
Let $\Phi^{\cE_\epsilon} \in C_0([0,\infty), X_\epsilon)$ be the unique strong solution to \eqref{eq:lvshg-coupling-eps} and let $\lvshgmin^\epsilon\colon [0,\infty\times\Omega_\epsilon \to \R$ denote the process defined by
\begin{equation}
\label{eq:lvshg-minimiser}
\lvshgmin_s^\epsilon
=
-q_s^\epsilon \nabla v_s^{\cE_\epsilon} (\Phi_s^{\cE_\epsilon}),
\qquad s\in [0,\infty).	
\end{equation}
Then $\lvshgmin^\epsilon|_{[0,t]}$ is a minimiser of the 
variational formula \eqref{eq:conditional-boue-dupuis}.
In particular, the relation between $\lvshgmin^\epsilon$ and the difference field $\Phi^{\Delta_\epsilon}$ is given by
\begin{equation}
\Phi_t^{\Delta_\epsilon} = \int_t^\infty q_s^\epsilon \lvshgmin_s^\epsilon ds.
\end{equation}
\end{proposition}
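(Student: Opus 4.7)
The second identity $\Phi_t^{\Delta_\epsilon} = \int_t^\infty q_s^\epsilon \lvshgmin_s^\epsilon\,ds$ follows directly from \eqref{eq:lvshg-coupling-eps}, the definition \eqref{eq:lvshg-minimiser} of $\lvshgmin^\epsilon$, and $\dot c_s^\epsilon = (q_s^\epsilon)^2$:
\begin{equation*}
\Phi_t^{\Delta_\epsilon} = \Phi_t^{\cE_\epsilon} - \Phi_t^{\GFF_\epsilon} = -\int_t^\infty \dot c_s^\epsilon \nabla v_s^{\cE_\epsilon}(\Phi_s^{\cE_\epsilon})\,ds = \int_t^\infty q_s^\epsilon \lvshgmin_s^\epsilon\,ds.
\end{equation*}
For the minimiser claim, note that by the definition \eqref{eq:lvshg-renormalised-potential} of the renormalised potential, Proposition \ref{prop:conditional-boue-dupuis} asserts that the infimum value equals $v_t^{\cE_\epsilon}(\Phi_t^{\cE_\epsilon})$. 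Evaluating \eqref{eq:lvshg-coupling-eps} at $s=0$ gives $Y_t^\epsilon + \Phi_t^{\cE_\epsilon} + I_t^\epsilon(\lvshgmin^\epsilon) = \Phi_0^{\cE_\epsilon}$, so the plan reduces to proving
\begin{equation*}
\E\Bigl[v_0^{\cE_\epsilon}(\Phi_0^{\cE_\epsilon}) + \tfrac{1}{2}\int_0^t \|\lvshgmin_s^\epsilon\|_{L^2}^2\,ds \,\Bigm|\, \cF^t\Bigr] = v_t^{\cE_\epsilon}(\Phi_t^{\cE_\epsilon}).
\end{equation*}

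By Lemma \ref{lem:lvshg-he-v-pos-definite} the potential $v_0^{\cE_\epsilon}$ is convex, so the functional on the right-hand side of \eqref{eq:conditional-boue-dupuis} is convex in the drift $u$ and optimality reduces to a first-order condition. Perturbing $\lvshgmin^\epsilon$ by an admissible $h \in \HH_a[0,t]$, using that $h_s$ is $\cF^s$-measurable with $\cF^s \supseteq \cF^t$ for $s \leq t$, and that the self-adjoint operator $q_s^\epsilon$ is invertible for $s>0$, the tower property collapses the Euler--Lagrange equation to the pointwise identity
\begin{equation*}
\E[\nabla v_0^{\cE_\epsilon}(\Phi_0^{\cE_\epsilon}) \mid \cF^s] = \nabla v_s^{\cE_\epsilon}(\Phi_s^{\cE_\epsilon}), \qquad \text{a.s., for a.e.\ } s \in [0,t].
\end{equation*}

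To verify this identity the plan is to combine Proposition \ref{prop:law-polchinski-sde} with the Markov structure of the Polchinski coupling: the information in $\cF^s$ beyond $\Phi_s^{\cE_\epsilon}$ is independent of the backward SDE dynamics that produces $\Phi_0^{\cE_\epsilon}$ from $\Phi_s^{\cE_\epsilon}$, so $\E[\,\cdot\,|\cF^s] = \E[\,\cdot\,|\Phi_s^{\cE_\epsilon}]$, and the joint law of $(\Phi_s^{\cE_\epsilon}, \Phi_0^{\cE_\epsilon})$ matches the tower decomposition of $\nu^{\cE_\epsilon}$ read off from \eqref{eq:lvshg-density-renormalised-measure}. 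Concretely, conditional on $\Phi_s^{\cE_\epsilon}=\chi$, the increment $\Phi_0^{\cE_\epsilon}-\chi$ will have density proportional to $e^{-v_0^{\cE_\epsilon}(\chi+\psi)}$ with respect to the centred Gaussian with covariance $c_s^\epsilon$; substituting into \eqref{eq:renormalised-gradient-ratio-of-expectations} returns exactly $\nabla v_s^{\cE_\epsilon}(\chi)$. Admissibility $\lvshgmin^\epsilon \in \HH_a[0,t]$ is routine: backward progressive measurability is inherited from Proposition \ref{prop:lvshg-coupling-eps}, and $L^2$ integrability in $s$ uses standard bounds on $\nabla v_s^{\cE_\epsilon}$ available at fixed $\epsilon>0$. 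The main obstacle is the conditional-law step: the joint law of $(\Phi_s^{\cE_\epsilon},\Phi_0^{\cE_\epsilon})$ is a genuine strengthening of the marginal statement of Proposition \ref{prop:law-polchinski-sde}, and is expected to be extracted by revisiting the construction in its proof together with the semigroup property of the renormalisation flow.
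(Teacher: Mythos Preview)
Your approach is a genuinely different route from the paper's. The paper applies It\^o's formula to $s\mapsto v_s^{\cE}(\Phi_s^{\cE})$ and uses the Polchinski PDE \eqref{eq:polchinski-pde} to obtain the exact identity
\[
v_0^{\cE}(\Phi_0^{\cE}) = v_t^{\cE}(\Phi_t^{\cE}) - \tfrac{1}{2}\int_0^t \bigl(\nabla v_s^{\cE}(\Phi_s^{\cE})\bigr)_{\dot c_s}^2\,ds + \int_0^t \nabla v_s^{\cE}(\Phi_s^{\cE})\,\dot c_s^{1/2}\,dW_s,
\]
then shows the stochastic integral has vanishing conditional expectation (via Proposition~\ref{prop:gradient-l1-upper-bound} together with the sign bound for $\Lv$ and Brascamp--Lieb for $\ShG$ from Proposition~\ref{prop:lvshg-coupling-eps}) and reads off directly that $\bar u$ attains the value $v_t^{\cE}(\Phi_t^{\cE})$.

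Your reduction through convexity and the Euler--Lagrange equation is structurally correct, and the identification of the first-order condition with $\E[\nabla v_0^{\cE}(\Phi_0^{\cE})\mid\cF^s]=\nabla v_s^{\cE}(\Phi_s^{\cE})$ is right. The gap is exactly where you place it: the conditional law of $\Phi_0^{\cE}$ given $\Phi_s^{\cE}$ is not supplied by Proposition~\ref{prop:law-polchinski-sde}, and the standard ways to obtain it (Girsanov, or verifying the Fokker--Planck equation for the fluctuation density) unwind to the same It\^o-plus-Polchinski computation the paper performs. A shorter way to close your loop, bypassing the full conditional law, is to apply It\^o directly to $\nabla v_s^{\cE}(\Phi_s^{\cE})$: differentiating \eqref{eq:polchinski-pde} once in $\phi$ shows its drift vanishes identically, so it is a backward local martingale, and promoting it to a true martingale needs precisely the same integrability input the paper invokes. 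Either way the analytic core is identical; your packaging via first-order conditions is conceptually pleasant and makes the role of convexity explicit, but it does not shorten the argument, and the paper's direct computation of the value avoids the detour through the conditional distribution entirely.
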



\begin{proposition}
\label{prop:lvshg-all-scales-drift}
Let $\beta \in (0,8\pi)$ and $\genmin^\epsilon \in \HH_a[0,t]$ be a minimiser of \eqref{eq:conditional-boue-dupuis}.
Then there is a constant $C>0$, which is independent of $t\geq 0$,
such that
\begin{equation}
\sup_{\epsilon>0} \E\Big[ \int_0^t \| \genmin_s^\epsilon \|_{L^2(\Omega_\epsilon)}^2 ds \Big] \leq C.
\end{equation}
\end{proposition}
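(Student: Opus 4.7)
The key observation is that $v_0^{\cE_\epsilon}$ is \emph{nonnegative}, since $V\in\{\exp,\cosh\}$ is a positive function. Testing the variational formula of Proposition~\ref{prop:conditional-boue-dupuis} against the zero drift $u\equiv 0$ gives, for any minimiser $\genmin^\epsilon\in\HH_a[0,t]$,
\begin{equation*}
\E\Big[v_0^{\cE_\epsilon}(Y_t^\epsilon + \Phi_t^{\cE_\epsilon} + I_t^\epsilon(\genmin^\epsilon)) + \tfrac{1}{2}\int_0^t \|\genmin_s^\epsilon\|_{L^2(\Omega_\epsilon)}^2\, ds \Bigm| \cF^t\Big] \leq \E\big[v_0^{\cE_\epsilon}(Y_t^\epsilon + \Phi_t^{\cE_\epsilon}) \bigm| \cF^t\big].
\end{equation*}
Dropping the first term on the left (nonnegative) and taking total expectation, the problem reduces to the uniform bound $\E[v_0^{\cE_\epsilon}(Y_t^\epsilon + \Phi_t^{\cE_\epsilon})] \leq C$ with $C$ independent of $\epsilon>0$ and $t\geq 0$.

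To prove this reduced bound I would invoke the Brascamp-Lieb inequality. By Proposition~\ref{prop:law-polchinski-sde}, $\Phi_t^{\cE_\epsilon}\sim\nu_t^{\cE_\epsilon}$ is independent of $Y_t^\epsilon\sim N(0,c_t^\epsilon)$. Lemma~\ref{lem:lvshg-he-v-pos-definite} yields that $\nu_t^{\cE_\epsilon}$ is log-concave with density Hessian bounded below by $(c_\infty^\epsilon - c_t^\epsilon)^{-1}$, so Brascamp-Lieb applied to the evaluation functional $\phi\mapsto\pm\sqrt{\beta}\phi(x)$ gives
\begin{equation*}
\E_{\nu_t^{\cE_\epsilon}}\!\big[e^{\pm\sqrt{\beta}(\phi(x)-\E\phi(x))}\big] \leq e^{\frac{\beta}{2}(c_\infty^\epsilon - c_t^\epsilon)(x,x)}.
\end{equation*}
For $\cE=\Lv$, the pointwise comparison $\Phi_t^{\Lv_\epsilon}\leq\Phi_t^{\GFF_\epsilon}$ from Proposition~\ref{prop:lvshg-coupling-eps} gives $\E_{\nu_t^{\Lv_\epsilon}}\phi(x)\leq 0$, so the missing mean prefactor $e^{\sqrt{\beta}\E\phi(x)}$ is $\leq 1$; for $\cE=\ShG$, the same Proposition gives $\E_{\nu_t^{\ShG_\epsilon}}\phi=0$ and both signs are handled symmetrically.

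Multiplying by the Gaussian moment $\E[e^{\pm\sqrt{\beta}Y_t^\epsilon(x)}] = e^{\frac{\beta}{2}c_t^\epsilon(x,x)}$ (using independence of $Y_t^\epsilon$ and $\Phi_t^{\cE_\epsilon}$), and using the asymptotics $c_\infty^\epsilon(x,x) = \frac{1}{2\pi}\log(1/\epsilon)+O_m(1)$, the Wick factor $\epsilon^{\beta/4\pi}$ cancels exactly:
\begin{equation*}
\E\big[\wick{\exp(\pm\sqrt{\beta}(Y_t^\epsilon + \Phi_t^{\cE_\epsilon})(x))}_\epsilon\big] \leq \epsilon^{\beta/4\pi}\, e^{\frac{\beta}{2}c_\infty^\epsilon(x,x)} \lesssim_m 1,
\end{equation*}
uniformly in $\epsilon>0$ and $t\geq 0$. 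Integrating over $\Omega_\epsilon$ closes the argument.

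The main delicate point is the Brascamp-Lieb step: one needs both the Hessian lower bound from Lemma~\ref{lem:lvshg-he-v-pos-definite} and the sign (resp.\ vanishing) of $\E_{\nu_t^{\cE_\epsilon}}\phi(x)$ from Proposition~\ref{prop:lvshg-coupling-eps} to dispose of the otherwise dangerous prefactor $e^{\sqrt{\beta}\E\phi(x)}$; this is also the origin of the need to argue separately for $\Lv$ and $\ShG$. The exact compensation of the Wick prefactor $\epsilon^{\beta/4\pi}$ against the Gaussian moment $e^{\frac{\beta}{2}c_\infty^\epsilon(x,x)}$ is the structural feature that makes the bound $\epsilon$-uniform, and it works for the entire $L^1$ range $\beta\in(0,8\pi)$ covered by the proposition.
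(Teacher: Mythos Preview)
Your proof is correct and follows the same overall strategy as the paper: test the variational formula against the zero drift, drop the nonnegative potential term, and then bound $\E[v_0^{\cE_\epsilon}(Y_t^\epsilon+\Phi_t^{\cE_\epsilon})]$ uniformly. The details of this last bound differ slightly from the paper in both cases, and the comparison is instructive.

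For $\cE=\Lv$, the paper bypasses Brascamp--Lieb entirely: it uses the pointwise inequality $Y_t^\epsilon+\Phi_t^{\Lv_\epsilon}\leq Y_t^\epsilon+\Phi_t^{\GFF_\epsilon}=Y_\infty^\epsilon$ (a direct consequence of $\Phi_t^{\Lv_\epsilon}\leq\Phi_t^{\GFF_\epsilon}$ in Proposition~\ref{prop:lvshg-coupling-eps}) together with the monotonicity of $v_0^{\Lv_\epsilon}$ to get $v_0^{\Lv_\epsilon}(Y_t^\epsilon+\Phi_t^{\Lv_\epsilon})\leq v_0^{\Lv_\epsilon}(Y_\infty^\epsilon)$ pathwise, which is more elementary than your route through the mean and Brascamp--Lieb. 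For $\cE=\ShG$, on the other hand, your argument is \emph{lighter} than the paper's: you apply Brascamp--Lieb to $\nu_t^{\ShG_\epsilon}$ alone (Hessian $\geq(c_\infty^\epsilon-c_t^\epsilon)^{-1}$ by Lemma~\ref{lem:lvshg-he-v-pos-definite}) and then use independence of $Y_t^\epsilon$ and $\Phi_t^{\ShG_\epsilon}$ to factor the exponential moment, whereas the paper first establishes Proposition~\ref{prop:density-tggf-tshg}, a somewhat involved computation showing that the \emph{convolution} $\nu^{\ShG_\epsilon,t}$ has log-density Hessian $\geq c_\infty^{-1}$, and then applies Brascamp--Lieb to the sum directly. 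Your factorised approach yields the same bound $e^{\frac{\beta}{2}c_\infty^\epsilon(x,x)}$ without that extra proposition. (The paper does need Proposition~\ref{prop:density-tggf-tshg} later for Lemma~\ref{lem:sobolev-norms-sinh-finite}, so the saving is local to this proposition.)
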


In the heart of the proof of Theorem \ref{thm:coupling-pphi-to-gff-eps} is the following estimate on the small scales of minimising drifts $\genmin^\epsilon$,
which is then leveraged to estimates on the Sobolev norm of the difference field $\Phi_t^{\Delta_\epsilon}$.
Again, the proof of this result is different for $\cE= \Lv$ and $\cE= \ShG$,
and is therefore presented in Section \ref{ssec:lv-small-scales-drift} and Section \ref{ssec:shg-small-scales-drift} respectively.

\begin{proposition}
\label{prop:lvshg-small-scales-drift}
Let $\beta \in (0,4\pi)$ and let $\cE \in \{ \Lv, \ShG\}$.
Let $t>0$ and 
let $\genmin^\epsilon$ be a minimiser of \eqref{eq:conditional-boue-dupuis}.
For any $\delta \in (0,1-\beta/4\pi)$ there are positive random variable $\cW_{\delta,t}^{\cE_\epsilon}$ satisfying $\sup_{\epsilon>0} \sup_{t\geq 0}  \E[ \cW_{\delta,t}^{\cE_\epsilon}] < \infty$,
such that
\begin{equation}
\label{eq:lvshg-small-scales-drift}
\E\Big[\int_0^t \|\genmin_s^\epsilon \|_{L^2}^2 d s \bigm| \cF^t \Big]^{1/2}
\leq t^{\delta/2} \cW_{\delta, t}^{\cE_\epsilon}.
\end{equation}
\end{proposition}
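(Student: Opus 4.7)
The starting point is Proposition \ref{prop:bd-polchinski-correspondence}, which identifies the unique minimiser as $\genmin_s^\epsilon = -q_s^\epsilon \nabla v_s^{\cE_\epsilon}(\Phi_s^{\cE_\epsilon})$, giving
\[
\|\genmin_s^\epsilon\|_{L^2}^2 = \bigl(\nabla v_s^{\cE_\epsilon}(\Phi_s^{\cE_\epsilon})\bigr)_{\dot c_s^\epsilon}^2 = \int_{\Omega_\epsilon}\!\!\int_{\Omega_\epsilon} \dot c_s^\epsilon(x,y)\,\partial_{\phi_x}v_s^{\cE_\epsilon}(\Phi_s^{\cE_\epsilon})\,\partial_{\phi_y}v_s^{\cE_\epsilon}(\Phi_s^{\cE_\epsilon})\,dxdy.
\]
The first step is to recast the renormalised gradient via a Cameron-Martin shift $\zeta \mapsto \zeta + \sqrt{\beta}\,c_s^\epsilon(x,\cdot)$ inside \eqref{eq:renormalised-gradient-ratio-of-expectations}. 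For Liouville this yields
\[
\partial_{\phi_x}v_s^{\Lv_\epsilon}(\phi) = \sqrt{\beta}\,\epsilon^{\beta/4\pi}\,e^{\sqrt{\beta}\phi_x + \frac{\beta}{2}c_s^\epsilon(x,x)}R_s^+(\phi,x),
\]
where $R_s^+(\phi,x) \in (0,1]$ is a Radon-Nikodym ratio of shifted partition functions; the sinh-Gordon analogue is a signed two-term variant built from the $\pm\sqrt{\beta}$ exponents appearing in the decomposition of $\sinh$.

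Next I would apply a conditional Brascamp-Lieb bound. By Lemma \ref{lem:lvshg-he-v-pos-definite}, $v_s^{\cE_\epsilon}$ is convex, so the conditional law of $\Phi_s^{\cE_\epsilon}$ given $\cF^t$ has a log-concave density proportional to $e^{-v_s^{\cE_\epsilon}(\Phi_s)}$ with respect to the Gaussian $\mathcal{N}(\Phi_t^{\cE_\epsilon},\,c_t^\epsilon - c_s^\epsilon)$. Brascamp-Lieb applied to the linear functional $\sqrt{\beta}(\phi(x)+\phi(y))$ then yields
\[
\E\bigl[e^{\sqrt{\beta}(\Phi_s^{\cE_\epsilon}(x)+\Phi_s^{\cE_\epsilon}(y))} \bigm| \cF^t\bigr] \leq e^{\sqrt{\beta}(\bar\Phi_s(x)+\bar\Phi_s(y))}\exp\!\Bigl(\tfrac{\beta}{2}\bigl[(c_t^\epsilon-c_s^\epsilon)(x,x) + 2(c_t^\epsilon-c_s^\epsilon)(x,y) + (c_t^\epsilon-c_s^\epsilon)(y,y)\bigr]\Bigr),
\]
where $\bar\Phi_s := \E[\Phi_s^{\cE_\epsilon} \mid \cF^t]$. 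For Liouville, the nonpositive Polchinski drift ($\nabla v_r^{\Lv_\epsilon} \geq 0$) gives $\bar\Phi_s(x) \leq \Phi_t^{\cE_\epsilon}(x)$ pointwise; for sinh-Gordon, the centering $\E[\Phi_r^{\ShG_\epsilon}]=0$ from Proposition \ref{prop:lvshg-coupling-eps}, together with convexity, supplies a comparable substitute.

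Combining these ingredients with the trivial estimate $R_s^\pm \leq 1$ and the identity $\int_0^t \dot c_s^\epsilon(x,y)e^{\beta(c_t^\epsilon - c_s^\epsilon)(x,y)}\,ds = \beta^{-1}(e^{\beta c_t^\epsilon(x,y)}-1)$, the asymptotic cancellation $\epsilon^{\beta/2\pi} e^{\frac{\beta}{2}(c_t^\epsilon(x,x)+c_t^\epsilon(y,y))} \sim L_t^{\beta/2\pi}$ produces the two-point majorant
\[
\E\Bigl[\int_0^t \|\genmin_s^\epsilon\|_{L^2}^2\, ds \bigm| \cF^t\Bigr] \lesssim L_t^{\beta/2\pi} \int_{\Omega_\epsilon}\!\!\int_{\Omega_\epsilon} e^{\sqrt{\beta}(\Phi_t^{\cE_\epsilon}(x) + \Phi_t^{\cE_\epsilon}(y))} \bigl(e^{\beta c_t^\epsilon(x,y)}-1\bigr)\,dxdy =: \mathcal{Z}_t.
\]
Setting $\cW_{\delta,t}^{\cE_\epsilon} := (\mathcal{Z}_t/t^\delta)^{1/2}$ realises the required inequality by construction, and its uniform integrability reduces by Cauchy-Schwarz to controlling $\E[\mathcal{Z}_t]/t^\delta$. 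An unconditional Brascamp-Lieb estimate on $e^{\sqrt{\beta}(\Phi_t^{\cE_\epsilon}(x)+\Phi_t^{\cE_\epsilon}(y))}$ cancels the residual $\epsilon$-powers and reduces the $(x,y)$-integral to the classical $L^2$-phase chaos calculation: integrating the singularity $(L_t/|x-y|)^{\beta/2\pi}$ over the disc of radius $L_t$ yields $\E[\mathcal{Z}_t] \lesssim L_t^{2-\beta/2\pi} \sim t^{1-\beta/4\pi}$, which is finite exactly for $\beta<4\pi$ and makes $\E[\mathcal{Z}_t]/t^\delta$ bounded uniformly in $\epsilon,t$ precisely when $\delta < 1-\beta/4\pi$.

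The main obstacle will be the sinh-Gordon case. Since $\sinh$ is not sign-definite, the clean bound $R_s^\pm \leq 1$ can fail and the product $\nabla v_s^{\ShG_\epsilon}(x)\nabla v_s^{\ShG_\epsilon}(y)$ expands into four cross-terms; via $\sinh(a)\sinh(b) = \frac{1}{2}(\cosh(a+b) - \cosh(a-b))$ this is a difference of two a priori singular contributions. The crucial cancellation ensuring the same $t^\delta$-bound as in the Liouville case will have to be extracted by exploiting the monotonicity of $\sinh$ through an FKG-type correlation inequality on the log-concave measure $\nu_s^{\ShG_\epsilon}$, along the lines flagged in the introduction.
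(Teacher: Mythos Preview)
Your approach is genuinely different from the paper's, and while the Liouville half is plausible (modulo some loose asymptotics for $\E[\mathcal Z_t]$), the sinh-Gordon half has a real gap that the paper closes by an entirely different mechanism.

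The paper never works scale-by-scale with the explicit gradient $\nabla v_s^{\cE_\epsilon}$. Instead it first proves a \emph{competitor inequality} (Proposition~\ref{prop:lvshgmin-small-scales-pre-estimate}): comparing the minimiser $u^\epsilon$ with $(1-h)u^\epsilon$ in \eqref{eq:conditional-boue-dupuis} and using only the elementary monotonicity $-a\,e^{a+b}\le -a\,e^b$ (resp.\ $-a\sinh(a+b)\le -a\sinh(b)$) yields
\[
\E\Big[\int_0^t\|u_s^\epsilon\|_{L^2}^2\,ds\Bigm|\cF^t\Big]
\le \sqrt\beta\,\E\Big[-\int_{\Omega_\epsilon} I_t^\epsilon(u^\epsilon)\,\wick{V'\big(\sqrt\beta(Y_t^\epsilon+\Phi_t^{\cE_\epsilon})\big)}_\epsilon\,dx\Bigm|\cF^t\Big].
\]
This already lives at the single scale $t$, eliminating the cross-term problem you flag. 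The proof then bounds the pairing by Sobolev duality $\|I_t^\epsilon(u^\epsilon)\|_{H^{1-\delta}}\|\wick{e^{\pm\sqrt\beta(\cdot)}}\|_{H^{-1+\delta}}$, invokes the deterministic estimate $\|I_t^\epsilon(u)\|_{H^{1-\delta}}^2\lesssim t^{\delta}\int_0^t\|u_s\|_{L^2}^2\,ds$ (Lemma~\ref{lem:sobolev-norm-integrated-drift-1}), and closes via Cauchy--Schwarz in the conditional expectation. The random variable $\cW_{\delta,t}^{\cE_\epsilon}$ is thus $\E[\|\wick{e^{\pm\sqrt\beta(\cdot)}}\|_{H^{-1+\delta}}^2\mid\cF^t]^{1/2}$.

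For sinh-Gordon the paper does \emph{not} use FKG or any correlation inequality on $\nu_s^{\ShG_\epsilon}$. The key lemma (Proposition~\ref{prop:density-tggf-tshg}) is that the law of the \emph{combined} field $Y_t^\epsilon+\Phi_t^{\ShG_\epsilon}$ has a symmetric log-concave density with Hessian bounded below by $-\Delta^\epsilon+m^2$; Brascamp--Lieb then bounds the $H^{-1+\delta}$ norm of its Wick exponential by that of the full GFF (Lemma~\ref{lem:sobolev-norms-sinh-finite}). This sidesteps entirely the sign and cross-term issues in your scale-by-scale expansion: there is no need to control the conditional mean $\bar\Phi_s$, and the ``comparable substitute'' you allude to via $\E[\Phi_r^{\ShG_\epsilon}]=0$ is not available, since that is an unconditional statement whereas you need control of the $\cF^t$-conditional mean.

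In short: your pointwise-in-$s$ Cameron--Martin route may be salvageable for $\cE=\Lv$, but the paper's competitor-plus-duality argument is shorter, treats both models uniformly, and for $\cE=\ShG$ replaces the missing FKG step by a single Brascamp--Lieb comparison of $Y_t^\epsilon+\Phi_t^{\ShG_\epsilon}$ with the GFF.
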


\section{Existence of minimisers: proof of Proposition \ref{prop:bd-polchinski-correspondence}}

The argument is based on an application of Ito's formula to a function of the solution to \eqref{eq:polchinski-backward-sde-infty}.
In the course of the proof, we use the following result,
which allows to bound the $L^1$ norm of $\nabla v_t^{\cE_\epsilon}(\phi)$, defined by
\begin{equation}
\| \nabla v_t^{\cE_\epsilon}(\phi) \|_{L^1(\Omega_\epsilon)}
=
\epsilon^2 \sum_{x\in \Omega_\epsilon} |\partial_{\phi_x} v_t^{\cE_\epsilon}(\phi) |.
\end{equation}
We emphasise that this estimate is uniform in $\epsilon>0$.

\begin{proposition}
\label{prop:gradient-l1-upper-bound}
Let $\beta \in (0,8\pi)$ and let $\epsilon >0$. Then, for $\phi \in X_\epsilon$, 
\begin{equation}
\| \nabla v_t^{\Lv_\epsilon}(\phi) \|_{L^1(\Omega_\epsilon)}
\lesssim_m
\sqrt{\beta} \int_{\Omega_\epsilon} L_t^{\beta/4\pi}  e^{\sqrt{\beta}\phi_x} dx.
\end{equation}
Similarly, we have
\begin{equation}
\| \nabla v_t^{\ShG_\epsilon} (\phi) \|_{L^1(\Omega_\epsilon)}
\lesssim_m 
\sqrt{\beta} \int_{\Omega_\epsilon} L_t^{\beta/4\pi}  \cosh\big(\sqrt{\beta} \phi_x \big) dx.
\end{equation}
\end{proposition}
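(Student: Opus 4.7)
The plan is to bound $|\partial_{\phi_x} v_t^{\cE_\epsilon}(\phi)|$ pointwise, then sum over $x \in \Omega_\epsilon$ with weight $\epsilon^2$. Two ingredients are used throughout: (i) the Pauli-Villars covariance $c_t^\epsilon$ has non-negative entries, so its inverse $-\Delta^\epsilon + m^2 + 1/t$ has non-positive off-diagonals and the Gaussian $\EE_{c_t^\epsilon}$ satisfies FKG by Holley's criterion; and (ii) \eqref{eq:ct-asymptotics} gives $\epsilon^{\beta/4\pi} e^{\beta c_t^\epsilon(x,x)/2} \lesssim_m L_t^{\beta/4\pi}$.

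For the Liouville case, I would apply FKG to the ratio \eqref{eq:renormalised-gradient-ratio-of-expectations}: the tilt $e^{-v_0^{\Lv_\epsilon}(\phi+\zeta)}$ is coordinatewise decreasing in $\zeta$ (since $v_0^{\Lv_\epsilon}$ is coordinatewise increasing), while $\partial_{\phi_x} v_0^{\Lv_\epsilon}(\phi+\zeta) = \sqrt{\beta}\epsilon^{\beta/4\pi} e^{\sqrt{\beta}(\phi_x+\zeta_x)}$ is coordinatewise increasing. FKG decouples numerator and denominator, giving
\[
\partial_{\phi_x} v_t^{\Lv_\epsilon}(\phi) \leq \EE_{c_t^\epsilon}[\partial_{\phi_x} v_0^{\Lv_\epsilon}(\phi+\zeta)] = \sqrt{\beta}\epsilon^{\beta/4\pi} e^{\sqrt{\beta}\phi_x} e^{\beta c_t^\epsilon(x,x)/2},
\]
and the covariance estimate together with the positivity of the integrand closes this case.

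For the sinh-Gordon case the obstruction is that $v_0^{\ShG_\epsilon}(\phi+\zeta)$ is not coordinatewise monotone in $\zeta$ (since $\cosh$ is even), so the Liouville FKG argument fails. My plan is to use $\cosh = \tfrac12(e^{\cdot}+e^{-\cdot})$ to write $\partial_{\phi_x} v_0^{\ShG_\epsilon}(\phi+\zeta) = \tfrac{\sqrt{\beta}\epsilon^{\beta/4\pi}}{2}(e^{\sqrt{\beta}(\phi_x+\zeta_x)} - e^{-\sqrt{\beta}(\phi_x+\zeta_x)})$, apply the triangle inequality, and treat each of the two resulting expectations $E_\pm(\phi)$ separately. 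By a Cameron-Martin shift $\zeta \to \zeta \pm h$ with $h = \sqrt{\beta}\, c_t^\epsilon(\cdot, x) \geq 0$, each rewrites as
\[
E_\pm(\phi) = e^{\pm \sqrt{\beta}\, \phi_x + \beta c_t^\epsilon(x,x)/2}\, e^{-(v_t^{\ShG_\epsilon}(\phi \pm h) - v_t^{\ShG_\epsilon}(\phi))},
\]
reducing the bound to controlling the two differences $v_t^{\ShG_\epsilon}(\phi \pm h) - v_t^{\ShG_\epsilon}(\phi)$.

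The main obstacle is then to bound the sum $E_+(\phi) + E_-(\phi)$ by a constant multiple of $L_t^{\beta/4\pi} \epsilon^{-\beta/4\pi} \cosh(\sqrt{\beta}\phi_x)$. Convexity of $v_t^{\ShG_\epsilon}$ from Lemma \ref{lem:lvshg-he-v-pos-definite} gives only $(v_t^{\ShG_\epsilon}(\phi+h) - v_t^{\ShG_\epsilon}(\phi)) + (v_t^{\ShG_\epsilon}(\phi-h) - v_t^{\ShG_\epsilon}(\phi)) \geq 0$, which alone is insufficient to match the $\cosh$ shape on the right-hand side. The monotonicity of $\sinh$ enters precisely here: $\partial_{\phi_y} v_t^{\ShG_\epsilon}(\phi)$ is the average, under the log-concave sinh-Gordon tilt, of the monotone increasing function $\sqrt{\beta}\epsilon^{\beta/4\pi}\sinh(\sqrt{\beta}(\phi_y+\zeta_y))$, which together with $h \geq 0$ ensures that the linear-in-$h$ contribution $\langle \nabla v_t^{\ShG_\epsilon}(\phi), h\rangle$ to the expansion of $v_t^{\ShG_\epsilon}(\phi \pm h) - v_t^{\ShG_\epsilon}(\phi)$ carries the sign needed to preserve the $\cosh$ structure rather than degrade it. A complementary Brascamp-Lieb estimate on the log-concave tilted measure handles the remaining second-order error, and summation over $x$ gives the sinh-Gordon bound.
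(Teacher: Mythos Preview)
Your Liouville argument via FKG is correct and cleaner than the paper's for that case: it yields the pointwise bound $\partial_{\phi_x} v_t^{\Lv_\epsilon}(\phi) \leq \EE_{c_t^\epsilon}[\partial_{\phi_x} v_0^{\Lv_\epsilon}(\phi+\zeta)]$ directly, whereas the paper works with the full $L^1$ norm from the start.

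Your sinh-Gordon argument, however, has a real gap. After the Cameron--Martin rewriting you need
\[
e^{\sqrt{\beta}\phi_x - A} + e^{-\sqrt{\beta}\phi_x - B} \lesssim \cosh(\sqrt{\beta}\phi_x), \qquad A = v_t^{\ShG}(\phi+h)-v_t^{\ShG}(\phi),\; B = v_t^{\ShG}(\phi-h)-v_t^{\ShG}(\phi),
\]
uniformly in $\phi$. Convexity gives only $A+B\geq 0$; individually $A$ (or $B$) can be arbitrarily negative. For instance, if $\phi_x=0$ but $\phi_y$ is very negative for $y\neq x$, then shifting by the nonnegative $h=\sqrt{\beta}c_t(\cdot,x)$ moves those coordinates toward zero and can drive $A$ as negative as one likes, blowing up $e^{-A}$. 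Your appeal to the sign of the linear term $L=\langle\nabla v_t^{\ShG}(\phi),h\rangle$ does not help: convexity yields $A\geq L$, $B\geq -L$, hence $E_+ + E_- \leq e^{\beta c_t(x,x)/2}\,2\cosh(\sqrt{\beta}\phi_x - L)$, and $L$ depends on the whole field $\phi$, not just $\phi_x$, with no a~priori bound. The ``complementary Brascamp--Lieb estimate'' does not rescue this either: Brascamp--Lieb bounds covariances under the tilted measure, not $\He v_t^{\ShG}$ from above, and indeed $\He v_t^{\ShG}=\E_{\mathrm{tilt}}[\He v_0^{\ShG}]-\mathrm{Cov}_{\mathrm{tilt}}[\nabla v_0^{\ShG}]$ has an unbounded first term.

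The paper sidesteps the pointwise issue entirely. It first sums over $x$ using $|\sinh|\leq\cosh$ to get $\|\nabla v_0^{\cE_\epsilon}(\psi)\|_{L^1}\leq \sqrt{\beta}\, v_0^{\cE_\epsilon}(\psi)$, which turns the ratio \eqref{eq:renormalised-gradient-ratio-of-expectations} into $\sqrt{\beta}\,\EE_{c_t^\epsilon}[v_0\,e^{-v_0}]/\EE_{c_t^\epsilon}[e^{-v_0}]$. For a nonnegative random variable $X=v_0^{\cE_\epsilon}(\phi+\zeta)$ and any $C>0$, splitting on $\{X\leq C\}$ and $\{X>C\}$ gives
\[
\frac{\EE[Xe^{-X}]}{\EE[e^{-X}]}\leq C + \frac{e^{-C}\,\EE[X]}{\EE[e^{-X}]},
\]
and the choice $C=-\log\EE[e^{-X}]=v_t^{\cE_\epsilon}(\phi)$ collapses the second term to $\EE[X]$. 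Jensen then bounds $v_t^{\cE_\epsilon}(\phi)\leq\EE[X]$ as well, and computing $\EE_{c_t^\epsilon}[v_0^{\cE_\epsilon}(\phi+\zeta)]$ gives the stated $L_t^{\beta/4\pi}$ factor. This works identically for both models, with no need for FKG, sign arguments, or pointwise control.
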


\begin{proof}
Apart from one estimate the argument is identical $\cE = \Lv$ and $\cE= \ShG$,
for which we treat both cases simultaneously.
In line with the general convention, we write $v_t^{\cE_\epsilon}$ and $\nabla v_t^{\cE_\epsilon}$ in statements that hold for $\cE \in \{\Lv, \ShG \}$
and explicitely write $\Lv$ and $\ShG$ otherwise.
From \eqref{eq:renormalised-gradient-ratio-of-expectations} we obtain, for $\phi \in X_\epsilon$,
\begin{equation}
\label{eq:gradient-vt}
\| \nabla v_t^{\cE_\epsilon} (\phi)\|_{L^1(\Omega_\epsilon)} 
\leq
\frac{\EE_{c_t^\epsilon}[\| \nabla v_0^{\cE_\epsilon}(\phi + \zeta) \|_{L^1(\Omega_\epsilon)} e^{-v_0^{\cE_\epsilon}(\phi + \zeta)}]}{\EE_{c_t^\epsilon}[e^{-v_0^\cE(\phi+\zeta)}]}.
\end{equation}
Now, for $\cE = \Lv$, we have
\begin{equation}
\label{eq:lv-nablavx}
|\partial_{\phi_x} v_0^{\Lv_\epsilon} (\phi)|
=
\epsilon^{\beta/4\pi} \sqrt{\beta} \exp(\sqrt{\beta} \phi_x )
= \sqrt{\beta} \wick{\exp(\sqrt{\beta}\phi_x ) }_\epsilon,
\end{equation}
while for $\cE = \ShG$,
we use the estimate $|\sinh(\sqrt{\beta} \phi_x)| \leq \cosh(\sqrt{\beta} \phi_x)$ to obtain 
\begin{equation}
\label{eq:shg-nablavx}
|\partial_{\phi_x} v_0^{\ShG_\epsilon} (\phi) | 
=
|\epsilon^{\beta/4\pi} \sqrt{\beta} \sinh(\sqrt{\beta} \phi_x )| \leq \epsilon^{\beta/4\pi} \sqrt{\beta} \cosh(\sqrt{\beta} \phi_x )
= \sqrt{\beta} \wick{\cosh(\sqrt{\beta}\phi_x ) }_\epsilon.
\end{equation}
Summing \eqref{eq:lv-nablavx} and \eqref{eq:shg-nablavx} over $x\in \Omega_\epsilon$, we see that
\begin{equation}
\|\nabla v_0^{\cE_\epsilon} (\phi) \|_{L^1} \leq \sqrt{\beta} v_0^{\cE_\epsilon}(\phi),
\end{equation}
and thus, we obtain from \eqref{eq:gradient-vt}
\begin{equation}
\| \nabla v_t^{\cE_\epsilon}(\phi) \|_{L^1} 
\leq
\sqrt{\beta} \frac{\EE_{c_t^\epsilon}[ v_0^{\cE_\epsilon}(\phi+\zeta)  e^{-v_0^{\cE_\epsilon}(\phi + \zeta)}]}{\EE_{c_t^\epsilon}[e^{-v_0^{\cE_\epsilon}(\phi+\zeta)}]}.
\end{equation}
For $C>0$, we split the expectation on the right hand side of the previous display into $\{v_0^{\cE_\epsilon}(\phi + \zeta) \leq C\}$ and $\{v_0^{\cE_\epsilon}(\phi+\zeta)>C\}$ and obtain 
\begin{equation}
\| \nabla v_t^{\cE_\epsilon} (\phi) \|_{L^1} 
\leq
\sqrt{\beta} \Big[ C +  \frac{\EE_{c_t^\epsilon}[ v_0^{\cE_\epsilon}(\phi+\zeta) ] e^{-C}} {\EE_{c_t^\epsilon}[e^{-v_0^{\cE_\epsilon}(\phi+\zeta)}]} \Big].
\end{equation}
To further estimate the last display we choose $C= C(t,\phi)$ according to
\begin{equation}
e^{-C} = \EE_{c_t^\epsilon}[e^{-v_0^{\cE_\epsilon}(\phi+\zeta)}] 
\quad \iff \quad
C = -\log \EE_{c_t^\epsilon}[e^{-v_0^{\cE_\epsilon}(\zeta + \phi)}] = v_t^{\cE_\epsilon}(\phi).
\end{equation}
Then, using \eqref{eq:ct-asymptotics}
we have by Jensen's inequality
\begin{equation}
\| \nabla v_t^{\Lv_\epsilon}(\phi) \|_{L^1} 
\leq 
\sqrt{\beta} \Big[  v_t^{\Lv_\epsilon}(\phi) + \EE_{c_t^\epsilon}[v_0^{\cE_\epsilon}(\phi + \zeta)]  \Big]
\lesssim_{m}
2 \sqrt{\beta} \int_{\Omega_\epsilon} L_t^{\beta/4\pi}\exp(\sqrt{\beta}\phi_x) dx,
\end{equation}
and similarly
\begin{equation}
\label{eq:l1-vt}
\| \nabla v_t^{\ShG_\epsilon}(\phi) \|_{L^1} 
\lesssim_m 
2 \sqrt{\beta} \int_{\Omega_\epsilon} L_t^{\beta/4\pi} \cosh(\sqrt{\beta}\phi_x)  dx.
\end{equation}
\end{proof}

Next, we give the proof of Proposition \ref{prop:bd-polchinski-correspondence} treating again $\cE \in  \{\Lv, \ShG\}$ simultaneously.

\begin{proof}[Proof of Proposition \ref{prop:bd-polchinski-correspondence}]
To simplify the notation, we drop $\epsilon>0$ throughout this proof.
Since $(\Phi_t^{\cE})_{t\in [0,\infty]}$ is $\cF^t$-measurable and continuous,
it follows that $\lvshgmin^\epsilon |_{[0,t]} \in \HH_a^\epsilon[0,t]$ for any $t\in [0,\infty]$.

Applying It\^o's formula to $v_t^\cE(\Phi_t^\cE)$ we obtain
\begin{align}
d v_t^{\cE}(\Phi_t^{\cE}) &= - \nabla v_t^{\cE}(\Phi_t^{\cE}) \dot c_t \nabla v_t^{\cE}(\Phi_t^{\cE}) dt - \partial_t v_t^{\cE}(\Phi_t^{\cE}) dt
\nnb 
& +\frac{1}{2} \tr\big(\He v_t^{\cE} \dot c_t \big) dt
+\nabla v_t^{\cE}(\Phi_t^{\cE}) \dot c_t^{1/2} dW_t \nnb
&= -\big(\nabla v_t^{\cE} (\Phi_t^{\cE})\big)_{\dot c_t}^2 - \partial_t v_t^{\cE}(\Phi_t^{\cE}) dt
+
\frac{1}{2} \Delta_{\dot c_t} v_t^{\cE}(\Phi_t^{\cE}) dt
+ \nabla v_t^{\cE}(\Phi_t^{\cE}) \dot c_t^{1/2} dW_t
\nnb
&= -\frac{1}{2} \big(\nabla v_t^{\cE} (\Phi_t^{\cE})\big)_{\dot c_t}^2 dt + \nabla v_t^{\cE}(\Phi_t^{\cE}) \dot c_t^{1/2} dW_t,
\end{align}
where we used the Polchinski equation \eqref{eq:polchinski-pde} in the last step.
Note that $\Phi^{\cE}$ is the solution to a backward SDE starting at $t=\infty$.
To justify the application of It\^o's formula in this case,
we fix $T>0$ and consider,
for $\tau \leq T$, the process $\tilde \Phi^\cE$ solving the forward SDE
\begin{equation}
\label{eq:lvshg-forward-SDE-to-T-minimiser-proof}
d \tilde \Phi_\tau = - \dot c_{T-\tau} \nabla v_{T-\tau}^{\cE} (\tilde \Phi_\tau) d\tau + \dot c_{T-\tau}^{1/2} d W_t^\epsilon \qquad 0\leq \tau\leq T,
\qquad  \tilde \Phi_0 \sim \Phi_T^{\cE}.
\end{equation}
Note that, since $\Phi_T^{\cE}$ is independent of $\sigma(W_\tau \colon \tau \leq T)$,
the SDE \eqref{eq:lvshg-forward-SDE-to-T-minimiser-proof} is well-defined.
We then apply the standard It\^o formula to $v_{T-\tau}^{\cE} (\tilde \Phi_\tau^{\cE})$ for $\tau \in [0,T-t]$ and obtain
\begin{equation}
\label{eq:v-phi-after-ito-and-cond-exp}
v_0^{\cE} (\Phi_0^{\cE} ) 
= 
v_t^{\cE} (\Phi_t^{\cE} )
- \frac{1}{2} \int_0^t \big( \nabla v_s^{\cE} (\Phi_s^{\cE}) \big)_{\dot c_s}^2 ds
+ 
\int_0^t \nabla v_s^{\cE} (\Phi_s^{\cE}) \cdot \dot c_s^{1/2} dW_s.
\end{equation}
Taking the conditional expectation, we have
\begin{equation}
\label{eq:lvshg-ito-conditional-expectation-phi-T}
\E \Big[ v_0^{\cE}(\Phi_0^{\cE}) - v_t^{\cE}(\Phi_t^{\cE}) \bigm | \cF^t \Big]
= - \frac{1}{2} \E \Big[ \int_0^t \big(\nabla v_s^{\cE} (\Phi_s^{\cE})\big)_{\dot c_s}^2  ds \bigm | \cF^t \Big],
\end{equation}
where we used that for $t\in [0,\infty]$
\begin{equation}
\label{eq:int-nablav-T-ct12-dw-finite-as}
\E \Big[ \int_0^t \nabla v_s^{\cE}(\Phi_s^{\cE}) \dot c_s^{1/2} d W_s \bigm | \cF^t \Big] = 0 \qquad \text{a.s.}
\end{equation}
To see that this is true, we need to show that the integrand is in $L^2(\avg{W|_{[0,t]}})$ conditional on $\cF^t$, i.e., we need that
\begin{equation}
\E \big[ \int_0^t
\| \dot c_s^{1/2} \nabla v_s^{\cE}(\Phi_s^{\cE}) \|_{L^2(\Omega_\epsilon)}^2 ds \bigm | \cF^t \big] < \infty \qquad \text{a.s.,}
\end{equation}
where we note that
\begin{equation}
\| \dot c_s^{1/2} \nabla v_s^\cE(\Phi_s^{\cE}) \|_{L^2(\Omega_\epsilon)}^2
= \nabla v_s^{\cE} (\Phi_s^{\cE}) \dot c_s^\epsilon \nabla v_s^{\cE}(\Phi_s^{\cE}) .
\end{equation}
To prove the last claim, we show the stronger estimate
\begin{equation}
\label{eq:expectation-l2-drift-term}
\E \Big[ \int_0^t
\| \dot c_s^{1/2} \nabla v_s^{\cE}(\Phi_s^{\cE}) \|_{L^2(\Omega_\epsilon)}^2 ds \Big] < \infty .
\end{equation}
To this end, we use the crude bound $\sup_{t\geq 0} \sup_{x,y\in \Omega_\epsilon} |\dot c_t^\epsilon(x,y)|\lesssim_\epsilon 1$ and estimate
\begin{align}
\| \dot c_s^{1/2} \nabla v_s^\cE(\Phi_s^{\cE}) \|_{L^2(\Omega_\epsilon)}^2
&\lesssim_\epsilon
\epsilon^4\sum_{x,y\in \Omega_\epsilon}
|\partial_{\phi_x} v_t^{\cE} (\Phi_t^{\cE})| 
|\partial_{\phi_y} v_t^{\cE} (\Phi_t^{\cE})|
\nnb
&=
\Big(\epsilon^2 \sum_{x\in \Omega_\epsilon} |\partial_{\phi_x} v_t^{\cE} (\Phi_t^{\cE})| 
\Big)^2
= \| \nabla v_t^{\cE}(\Phi_t^{\cE}) \|_{L^1}^2 .
\end{align}
Now, Proposition \ref{prop:gradient-l1-upper-bound}, gives an upper bound on the expectation value in \eqref{eq:expectation-l2-drift-term} in terms of exponential moments of $\Phi_s^{\cE}$.
For $\cE= \Lv$ we have
\begin{equation}
\label{eq:lv-upper-bound-expectation-l2-dct12nablavt}
\E \Big[\int_0^t
\| \dot c_s^{1/2} \nabla v_s^{\Lv}(\Phi_s^{\Lv}) \|_{L^2(\Omega_\epsilon)}^2 ds \Big]
\lesssim_{\epsilon, m}
\beta \int_0^t \E \Big[\Big( \epsilon^2 \sum_{x\in \Omega_\epsilon} L_s^{\beta/4\pi} \exp\big( \sqrt{\beta}\Phi_s^{\cE} (x) \big) \Big)^2\Big] ds
 .
\end{equation}
We then use $\Phi_s^{\Lv_\epsilon} \leq \Phi_s^{\GFF_\epsilon}$,
which holds by Proposition \ref{prop:lvshg-coupling-eps} for $\cE= \Lv$,
and the existence of exponential moments of Gaussian random variables to show that the last display in \eqref{eq:lv-upper-bound-expectation-l2-dct12nablavt} is finite.
In the case $\cE = \ShG$ the same conclusion holds by Proposition \ref{lem:lvshg-he-v-pos-definite} and the Brascamp-Lieb inequality for exponential moments \cite{MR0450480}.
This concludes the proof of \eqref{eq:int-nablav-T-ct12-dw-finite-as}.

Returning to the actual proof, we first note that
\begin{equation}
e^{-v_t^\cE(\Phi_t^\cE)}  = \E\Big [e^{-v_0^\cE(Y_t + \Phi_t^\cE)} \bigm | \cF^t \Big],
\end{equation}
which holds by the indpendence of $Y_t$ and $\Phi_t^\cE$ and standard properties of conditional expectation.
Therefore, we obtain from \eqref{eq:v-phi-after-ito-and-cond-exp}
\begin{align}
\E &\Big[ \frac{1}{2}\int_0^t \big(\nabla v_s^{\cE} (\Phi_s^{\cE})\big)_{\dot c_s}^2  ds \bigm | \cF^t \Big]
=  v_t^{\cE}(\Phi_t^{\cE}) - \E \Big[ v_0^{\cE}(\Phi_0^{\cE}) \bigm | \cF^t\Big]
\nnb
&= - \log \E\Big [e^{-v_0^{\cE}(Y_t + \Phi_t^{\cE})} \bigm | \cF^t \Big] - \E \Big[  v_0^{\cE} \Big(\int_0^\infty q_t dW_t - \int_0^\infty \dot c_t \nabla v_t^{\cE}(\Phi_t^{\cE})dt \Big) \bigm |\cF^t \Big].
\end{align}
Rearranging this and using \eqref{eq:Y-definition} show that
\begin{align}
-  \log \E\Big [e^{-v_0^\cE(Y_t + \Phi_t^\cE)} \bigm | \cF^t \Big] 
&= \E \Big[   v_0^\cE\Big(\int_0^\infty q_t dW_t - \int_0^\infty \dot c_t \nabla v_t^\cE(\Phi_t^\cE)dt \Big)
+
\frac{1}{2}\int_0^t \big(\nabla v_s^\cE (\Phi_s^\cE)\big)_{\dot c_s}^2  ds \bigm | \cF^t \Big]
\nnb
& = \E \Big[ v_0^\cE \Big( Y_t + \Phi_t^\cE - \int_0^t \dot c_s \nabla v_s^\cE(\Phi_s^\cE)ds \Big)
+ \frac{1}{2}\int_0^t \big(\nabla v_s^\cE (\Phi_s^\cE)\big)_{\dot c_s}^2  ds \bigm | \cF^t \Big],
\end{align}
which proves that $\lvshgmin_s = -q_s \nabla v_s^\cE(\Phi_s^\cE)$, $s\in [0,t]$ is a minimiser for \eqref{eq:conditional-boue-dupuis}.
\end{proof}

\section{Estimates for minimising drifts: Proofs of Proposition \ref{prop:lvshg-all-scales-drift} and Proposition \ref{prop:lvshg-small-scales-drift}}

The proofs of the estimates on minimisers are different for $\cE = \Lv$ and $\cE = \ShG$.
In order to make the presentation clear, we treat both cases seperately starting with the case $\cE = \Lv$.
Before, we establish a useful estimate for the minimisers of the Boue-Dupuis formula based on a comparision with suitable competitors.
The proof relies on the monotonicity of the derivatives of $v_0^{\cE_\epsilon}$ and is similar for $\cE = \Lv$ and $\cE = \ShG$, for which the following result is stated and proved for both cases.

\begin{proposition}
\label{prop:lvshgmin-small-scales-pre-estimate}
Let $\beta \in (0,8\pi)$ and let $t>0$ and $\epsilon>0$. Let $\genmin^\epsilon \in \HH_a[0,t]$ be a minimiser of \eqref{eq:conditional-boue-dupuis}.
For $\cE = \Lv$ we have
\begin{equation}
\label{eq:lvmin-small-scales-pre-estimate}
\E \Big[ \int_0^t \|u_s^\epsilon\|_{L^2}^2 ds \bigm | \cF^t \Big]
\leq \sqrt{\beta} \E\big[ - \int_{\Omega_\epsilon} I_t^\epsilon(u^\epsilon) \wick{\exp( \sqrt{\beta} Y_t^\epsilon + \Phi_t^{\Lv_\epsilon} )}_\epsilon dx \bigm | \cF^t \big]
\qquad \text{a.s.}
\end{equation}
Similarly, we have for $\cE = \ShG$
\begin{equation}
\label{eq:shgmin-small-scales-pre-estimate}
\E \big[ \int_0^t \|\genmin_s^\epsilon\|_{L^2}^2 ds \bigm | \cF^t \big]
\leq \sqrt{\beta} \E\Big[ - \int_{\Omega_\epsilon} I_t^\epsilon(\genmin^\epsilon) \wick{\sinh\big( \sqrt{\beta} (Y_t^\epsilon + \Phi_t^{\ShG_\epsilon})\big )}_\epsilon dx \bigm | \cF^t \Big] \qquad \text{a.s.}
\end{equation}
\end{proposition}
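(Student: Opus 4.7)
The strategy is to combine the first-order optimality of $\genmin^\epsilon$ for the convex variational problem \eqref{eq:conditional-boue-dupuis} with the coordinate-wise monotonicity of the nonlinearity $\partial_{\phi_x} v_0^{\cE_\epsilon}$.

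Write $a = Y_t^\epsilon + \Phi_t^{\cE_\epsilon}$, $I = I_t^\epsilon(\genmin^\epsilon)$, and $K = \int_0^t \|\genmin_s^\epsilon\|_{L^2(\Omega_\epsilon)}^2\,ds$. Since $v_0^{\Lv_\epsilon}$ and $v_0^{\ShG_\epsilon}$ are convex (being integrals of $e^{\sqrt{\beta}\phi_x}$ and $\cosh(\sqrt{\beta}\phi_x)$), the cost functional
\begin{equation*}
F(u) := \E\Big[v_0^{\cE_\epsilon}\bigl(a + I_t^\epsilon(u)\bigr) + \tfrac{1}{2}\int_0^t \|u_s\|_{L^2}^2\,ds \bigm| \cF^t\Big]
\end{equation*}
is convex in $u\in \HH_a[0,t]$. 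In particular, the scalar map $g(\mu) := F\bigl((1+\mu)\genmin^\epsilon\bigr)$ is convex on $(-1,\infty)$ with a minimum at $\mu=0$, so its left-derivative satisfies $g'(0^-) \leq 0$. Computing $g'(0^-)$ explicitly by applying monotone convergence to the non-decreasing secant slopes of the convex map $\mu \mapsto v_0^{\cE_\epsilon}(a + (1+\mu)I)$ yields
\begin{equation*}
g'(0^-) = \E\bigl[\nabla v_0^{\cE_\epsilon}(a + I) \cdot I \bigm| \cF^t\bigr] + \E[K \bigm| \cF^t],
\end{equation*}
and the inequality $g'(0^-) \leq 0$ rearranges to $\E[K \bigm| \cF^t] \leq -\E\bigl[\nabla v_0^{\cE_\epsilon}(a + I) \cdot I \bigm| \cF^t\bigr]$.

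The right-hand side is then rewritten using $\partial_{\phi_x} v_0^{\Lv_\epsilon}(\phi) = \sqrt{\beta}\wick{\exp(\sqrt{\beta}\phi_x)}_\epsilon$ and $\partial_{\phi_x} v_0^{\ShG_\epsilon}(\phi) = \sqrt{\beta}\wick{\sinh(\sqrt{\beta}\phi_x)}_\epsilon$, both of which are $\sqrt{\beta}$ times a monotone increasing function of $\phi_x$. The elementary pointwise inequality
\begin{equation*}
-\psi\, f(a+\psi) \leq -\psi\, f(a),
\end{equation*}
valid for any monotone increasing $f\colon \R \to \R$ and $a,\psi \in \R$ (verified by case analysis on the sign of $\psi$), applied pointwise with $\psi = \sqrt{\beta} I(x)$ and integrated over $x \in \Omega_\epsilon$, replaces the argument $a + I$ in the gradient term by $a$. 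This yields precisely \eqref{eq:lvmin-small-scales-pre-estimate} and \eqref{eq:shgmin-small-scales-pre-estimate}.

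The main technical obstacle is justifying the exchange of the limit $\mu \uparrow 0$ with the conditional expectation in computing $g'(0^-)$. By convexity of $\mu \mapsto v_0^{\cE_\epsilon}(a + (1+\mu)I)$, the difference quotient is monotone in $\mu$, so it suffices to bound its absolute value at a single auxiliary point $\mu_0 \in (-1,0)$, say $\mu_0 = -1/2$. For $\cE = \Lv$, this reduces to an exponential moment estimate exploiting $\Phi_t^{\Lv_\epsilon} \leq \Phi_t^{\GFF_\epsilon}$ from Proposition \ref{prop:lvshg-coupling-eps}, and the Gaussian character of $Y_t^\epsilon$. For $\cE = \ShG$, the analogous estimate relies on the convexity of $v_s^{\ShG_\epsilon}$ (Lemma \ref{lem:lvshg-he-v-pos-definite}) together with the Brascamp-Lieb inequality, analogously to the argument producing \eqref{eq:expectation-l2-drift-term} in the proof of Proposition \ref{prop:bd-polchinski-correspondence}.
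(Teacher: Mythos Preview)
Your argument is correct and follows essentially the same idea as the paper: perturb the minimiser multiplicatively, $u\mapsto (1+\mu)u$, extract a first-order inequality from minimality, and then use the pointwise monotonicity $-\psi\,V'(a+\psi)\le -\psi\,V'(a)$ to drop the integrated drift from the argument of $V'$. The only substantive difference is the order of the last two steps. The paper applies the mean value theorem and the monotonicity bound \emph{inside} the conditional expectation first, obtaining an upper bound that is already independent of $h$; the limit $h\to 0$ is then trivial and no interchange of limit and expectation is needed. Your route instead passes to the limit first (computing $g'(0^-)$) and applies monotonicity afterwards, which forces you to justify the interchange---the technical point you flag. That justification works, but your sketch is slightly off: bounding the secant at $\mu_0=-1/2$ requires controlling $\E[v_0^{\cE}(a+\tfrac12 I)\mid\cF^t]$, and the inputs you cite ($\Phi_t^{\Lv}\le\Phi_t^{\GFF}$, or Brascamp--Lieb for $\ShG$) only control $\E[v_0^{\cE}(a)\mid\cF^t]$. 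This is easily repaired, either by convexity $v_0^{\cE}(a+\tfrac12 I)\le\tfrac12 v_0^{\cE}(a)+\tfrac12 v_0^{\cE}(a+I)$, or more directly by taking $\mu_0=-1$, which reduces to $v_0^{\cE}(a)$ and $v_0^{\cE}(a+I)$; the latter has finite conditional expectation since it is part of the finite Bou\'e--Dupuis infimum.
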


\begin{proof}
Throughout this proof, $\epsilon>0$ is dropped from the notation in most occasions. We also write $L^2$ instead of $L^2(\Omega_\epsilon)$.
Let $\genmin$ be the minimiser, which exists by the assumption.
Since $\genmin \in \mathbb{H}_a[0,t]$,
we have that a.s.\
\begin{equation}
\int_0^t \|\genmin_s\|_{L^2}^2 ds < \infty.
\end{equation}
For $h>0$ we define
\begin{equation}
\genmin^{h,-} = \genmin(1-h).
\end{equation}
Since $\genmin$ is adapted, so is $\genmin^{h,-}$.
Moreover, we have
\begin{equation}
\label{eq:perturbed-minimiser-finite-l2}
\int_0^t \|\genmin_s^{h,-}\|_{L^2}^2 ds
= (1 - h)^2 \int_0^t   \|\genmin_s\|_{L^2}^2 ds  < \infty,
\end{equation}
which shows that $\genmin^{h,-} \in \mathbb{H}_a[0,t]$.

Since $\genmin$ is a minimiser of \eqref{eq:conditional-boue-dupuis}, we have that
\begin{equation}
\E \Big[v_0^\cE \big (Y_t + \Phi_t^\cE +  I_t(\genmin)\big) + \frac{1}{2} \int_0^t \|\genmin_s\|_{L^2}^2 ds \bigm| \cF^t\Big]
\leq  \E \Big[v_0^\cE \big (Y_t + \Phi_t^\cE +  I_t(\genmin^{h,-}) \big) + \frac{1}{2} \int_0^t \|\genmin_s^{h,-}\|_{L^2}^2 ds \bigm| \cF^t\Big] ,
\end{equation}
and thus, using the equality in \eqref{eq:perturbed-minimiser-finite-l2} we have
\begin{align}
\label{eq:minimiser-competitor-estimate}
\frac{1}{2} \E \Big[ \int_0^t \|\genmin_s\|_{L^2}^2 ds \bigm| \cF^t\Big] & - 
\frac{1}{2} (1-h)^2 \E \Big[ \int_0^t \|\genmin_s\|_{L^2}^2 ds \bigm| \cF^t\Big] 
 \nnb
&\leq \E \Big[v_0^\cE \big (Y_t + \Phi_t^\cE +  I_t(\genmin^{h,-})\big) \bigm| \cF^t\Big]
- \E \Big[v_0^\cE \big (Y_t + \Phi_t^\cE +  I_t(\genmin)\big)  \bigm| \cF^t\Big].
\end{align}
Furthermore, we note that by linearity
\begin{equation}
I_t(u^{h,-}) = \int_0^t q_\tau u_\tau^{h,-}  d\tau
= (1-h) \int_0^t q_\tau^\epsilon u_\tau  d\tau ,
\end{equation}
and thus, we have
\begin{align}
\label{eq:lv-small-scales-min-and-pert-min}
(h - \frac{1}{2} h^2) &\E \Big[ \int_0^t \|\genmin_s\|_{L^2}^2 ds \bigm| \cF^t\Big] \nnb
&\leq \E \Big[v_0^\cE \big (Y_t + \Phi_t^\cE +  (1-h) I_t(\genmin)\big) 
- 
v_0^{\cE_\epsilon} \big (Y_t^\epsilon + \Phi_t^{\cE_\epsilon} +  I_t^\epsilon(\genmin) \big) \bigm| \cF^t\Big].
\end{align}

From here on, we treat the the cases $\cE = \Lv$ and $\cE= \ShG$ separately.
To estimate the last right hand side for $\cE= \Lv$, we let $\Psi,\varphi \in X_\epsilon$ and consider the function $F^\Lv \colon [0,1]\to \R$,
\begin{equation}
F^\Lv (h) = v_0^\Lv \big( \sqrt{\beta} (\Psi + (1-h)\varphi )  \big).
\end{equation}
Note that we have
\begin{equation}
\label{eq:Flv-derivative}
\frac{d}{dh} F^\Lv (h) = - \sqrt{\beta} \int_{\Omega_\epsilon} \wick{ \exp\big( \sqrt{\beta} (\Psi_x + (1-h)\varphi_x)  \big) }_\epsilon \varphi_x dx,
\end{equation}
so that we have by the mean value theorem for some $\tilde h \in (0,1)$ depending on $\Psi$ and $\varphi$,
\begin{equation}
\label{eq:Flv-mean-value-theorem}
F^\Lv(h) - F^\Lv(0) = h \frac{d}{dh} F^\Lv (\tilde h).
\end{equation}

Now, we observe that, for $a,b \in \R$,
\begin{equation}
\label{eq:exp-monotonicity}
- a e^{b + a} \leq -a e^b,
\end{equation}
and thus, we get from \eqref{eq:lv-small-scales-min-and-pert-min}, \eqref{eq:Flv-derivative} and \eqref{eq:Flv-mean-value-theorem}
\begin{align}
(1 - \frac{1}{2} h) &\E \Big[ \int_0^t \|\genmin_s\|_{L^2}^2 ds \bigm| \cF^t\Big] \nnb
&\leq \sqrt{\beta} \E \Big[ -\int_{\Omega_\epsilon} I_t(\genmin) \wick{\exp\big(\sqrt{\beta}(Y_t + \Phi_t^\cE) \big)}_\epsilon dx \bigm| \cF^t\Big].
\end{align}
The last right hand side is independent of $h$, and thus, the estimate \eqref{eq:lvmin-small-scales-pre-estimate} follows when $h\to 0$.

We now discuss the case $\cE = \ShG$.
As before we have
\begin{align}
(h - \frac{1}{2} h^2) &\E \Big[ \int_0^t \|\genmin_s\|_{L^2}^2 ds \bigm| \cF^t\Big] \nnb
&\leq \E \Big[v_0^\ShG \big (Y_t + \Phi_t^\ShG +  (1 - h ) I_t(\genmin)\big) 
- 
v_0^\ShG \big (Y_t + \Phi_t^\ShG+  I_t(\genmin)\big)  \bigm| \cF^t\Big] .
\end{align}
Now, we consider, for $\Psi, \varphi \in X_\epsilon$, the function $F^\ShG \colon [0,1] \to \R$,
\begin{equation}
F^\ShG(h) = v_0^\ShG \big(\sqrt{\beta}(\Psi + (1-h) \varphi)\big),
\end{equation}
and note that
\begin{equation}
\frac{d}{dh} F^\ShG(h) =  - \sqrt{\beta} \int_{\Omega_\epsilon} \wick{\sinh\big( \sqrt{\beta} (\Psi_x + (1-h) \varphi_x) \big) }_\epsilon \varphi_x dx .
\end{equation}
We now observe that, for all $a,b \in \R$, 
\begin{equation}
\label{eq:sinh-monotonicity}
- a \sinh(b + a) \leq -a \sinh(b).
\end{equation}
Using similar arguments as for $\cE=\Lv$, we have, for any $h \in (0,1)$,
\begin{align}
(1 - \frac{1}{2} h) &\E \Big[ \int_0^t \|\genmin_s\|_{L^2}^2 ds \bigm| \cF^t\Big] \nnb
&\leq \E \Big[
-\int_{\Omega_\epsilon} \wick{\sinh \big(\sqrt{\beta}(Y_t + \Phi_t^\ShG) \big) }_\epsilon I_t(\genmin) dx \bigm| \cF^t\Big],
\end{align}
and thus, \eqref{eq:shgmin-small-scales-pre-estimate} follows when $h\to 0$.
\end{proof}

\begin{remark}
\label{rem:identity-scmall-scale-drift}
Using standard results of measure theory, we believe it is possible to improve the above estimate and deduce the following identity for minimisers $\genmin^\epsilon$ to \eqref{eq:conditional-boue-dupuis}:
let $V(\phi)= \wick{\exp(\sqrt{\beta}\phi)}_\epsilon$ for $\cE = \Lv$ and $V(\phi) = \wick{\cosh(\sqrt{\beta}\phi)}_\epsilon$ for $\cE = \ShG$. Then
\begin{equation}
\label{eq:conjectured-identity-small-scales-drift}
\E \big[ \int_0^t \|\genmin_s^\epsilon\|_{L^2}^2 ds \bigm | \cF^t \big]
= \E\big[ - \int_{\Omega_\epsilon} I_t^\epsilon(\genmin^\epsilon) V'\big( Y_t^\epsilon + \Phi_t^{\cE_\epsilon} + I_t^\epsilon(\genmin^\epsilon) \big) dx \bigm | \cF^t \big].
\end{equation}
We note that, using \eqref{eq:exp-monotonicity} and \eqref{eq:sinh-monotonicity},
this implies the estimates in Proposition \ref{prop:lvshgmin-small-scales-pre-estimate}.
For $\beta \in (0,4\pi)$,
the estimates in Proposition \ref{prop:lvshgmin-small-scales-pre-estimate} are sufficient to establish estimates on $\Phi^\Delta$ in Theorem \ref{thm:coupling-pphi-to-gff-eps}.
The main restriction to $\beta \in (0,4\pi)$ of the method comes from the regularity of the Wick exponentials stated in Lemma \ref{lem:sobolev-norms-liouville-finite} and Lemma \ref{lem:sobolev-norms-sinh-finite}.
As shown in \cite[Corollary 2.4]{MR4238209}, it is possible to obtain regularity estimates for the Wick exponentials for $\beta \in (0,8\pi)$,
if the Wick exponential is of the field $\Phi_0^\Lv$ rather than $\Phi_0^\GFF$.
This can be achieved using the conjectured identity \eqref{eq:conjectured-identity-small-scales-drift} together with generalisation of \cite[Corollary 2.4]{MR4528973} to the present case and the observation that,
under the unconditional measure,
\begin{equation}
Y_t^\epsilon + \Phi_t^{\cE_\epsilon} + I_t^\epsilon(\lvshgmin^\epsilon) \sim \nu^{\cE_\epsilon}
\end{equation}
where $\lvshgmin^\epsilon$ is as in \eqref{eq:lvshg-minimiser}.
We therefore believe that it is possible to remove the restriction to the $L^2$ phase and prove Sobolev norm estimates as in Theorem \ref{thm:coupling-pphi-to-gff-eps} for $\beta \in (0,8\pi)$.
\end{remark}

To deduce Proposition \ref{prop:lvshg-small-scales-drift} from Proposition \ref{prop:lvshgmin-small-scales-pre-estimate} we use a duality estimate on the right hand side together with regularity estimates of the Wick exponentials.
The relation between the Sobolev norms of integrated drifts and its $L^2$ norm is given by the following non-probabilistic estimates,
which are used in the sequel.
Recall that we denoted, for $s,t\in [0,\infty]$, $\epsilon>0$ and $u \in \HH_a$,
\begin{equation}
I_{s,t}^\epsilon(u) = \int_s^t q_\tau^\epsilon u_\tau d\tau, \qquad q_\tau^\epsilon = (\dot c_\tau^\epsilon)^{1/2}
\end{equation}
with the convention $I_t^\epsilon(u) = I_{0,t}^\epsilon(u)$.
Furthermore, we note that we have, for $u\in \HH_a$,
\begin{equation}
\|I_{s,t}^\epsilon(u) \|_{H^\alpha}^2= \sum_{k\in \Omega_\epsilon^*}    (1+|k|^2)^\alpha | \widehat{ I_{s,t}^\epsilon(u) } |^2 =\sum_{k\in \Omega_\epsilon^*}    (1+|k|^2)^\alpha \Big| \int_s^t \hat q_\tau^\epsilon(k) \hat u_\tau(k) d\tau\Big|^2,
\end{equation}
where $\hat q_\tau^\epsilon(k)$, $k\in \Omega_\epsilon^*$ denote the Fourier multiplier of $q_\tau^\epsilon$. It can be shown that
\begin{equation}
\hat q_\tau^\epsilon(k) = \frac{1}{\tau \big(-\hat \Delta^\epsilon (k) + m^2\big) + 1}, \qquad k \in \Omega_\epsilon^*,
\end{equation}
where $-\hat \Delta^\epsilon (k)$ denote the Fourier multiplier of $-\Delta^\epsilon$.
We refer to \cite[Section 4.1]{MR4665719} for more details.
Using that $-\hat \Delta^\epsilon(k) \geq c |k|^2$, $k\in \Omega_\epsilon^*$, for a constant $c>0$ which is independent of $\epsilon>0$, it can further be shown that
\begin{equation}
\label{eq:I-t-infty-uniform-in-h1}
\| I_{t, \infty}^\epsilon(u) \|_{H^1(\Omega_\epsilon)}^2 \lesssim  \int_t^\infty \|u_\tau \|_{L^2(\Omega_\epsilon)}^2 d\tau .
\end{equation}
The following statements generalise this to Sobolev norms of regularity $\alpha \in [0,2]$ at the cost and gain of fractional powers of the scale parameter.
For a proof of these results, where refer to \cite[Lemma 4.3 and Lemma 4.4]{MR4665719}.

\begin{lemma}
\label{lem:sobolev-norm-integrated-drift-1}
For any $\alpha \in [0,1)$ we have
\begin{equation}
\label{eq:small-scales-integrated-drift-h-norm}
\|I_{s,t}^\epsilon(u)\|_{H^\alpha(\Omega_\epsilon)}^2
\lesssim_\alpha (t-s)^{1-\alpha} \int_0^t \|u_\tau\|_{L^2(\Omega_\epsilon)}^2 d\tau .
\end{equation}
\end{lemma}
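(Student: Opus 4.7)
I plan to prove the lemma by passing to Fourier and reducing the estimate to a uniform pointwise bound on the symbol $\hat q_\tau^\epsilon(k) = (\tau\lambda_k + 1)^{-1}$, where $\lambda_k := -\hat\Delta^\epsilon(k) + m^2 \gtrsim 1+|k|^2$ uniformly in $\epsilon$. By Plancherel together with the Cauchy--Schwarz inequality applied in the time variable for each fixed $k$, one gets
\begin{equation}
\|I_{s,t}^\epsilon(u)\|_{H^\alpha(\Omega_\epsilon)}^2
= \sum_{k\in \Omega_\epsilon^*} (1+|k|^2)^\alpha \Big| \int_s^t \hat q_\tau^\epsilon(k)\, \hat u_\tau(k)\, d\tau \Big|^2
\leq K_{s,t}\cdot \int_s^t \|u_\tau\|_{L^2(\Omega_\epsilon)}^2\, d\tau,
\end{equation}
where $K_{s,t} := \sup_{k\in \Omega_\epsilon^*} (1+|k|^2)^\alpha \int_s^t \big(\hat q_\tau^\epsilon(k)\big)^2 d\tau$. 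The whole task is therefore to show $K_{s,t} \lesssim_\alpha (t-s)^{1-\alpha}$.

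\textbf{Main step.} A direct integration yields
\begin{equation}
\int_s^t \big(\hat q_\tau^\epsilon(k)\big)^2\, d\tau = \frac{t-s}{(s\lambda_k+1)(t\lambda_k+1)} \leq \frac{t-s}{t\lambda_k + 1},
\end{equation}
and also the crude bound $\int_s^t (\hat q_\tau^\epsilon(k))^2 d\tau \leq t-s$. I will split into the regimes $t\lambda_k \leq 1$ and $t\lambda_k \geq 1$. In the first regime $(1+|k|^2)^\alpha \lesssim \lambda_k^\alpha \leq t^{-\alpha}$, and combining this with the crude bound gives $(1+|k|^2)^\alpha \int_s^t (\hat q^\epsilon_\tau(k))^2 d\tau \lesssim (t-s)/t^\alpha$. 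In the second regime I use the sharper bound $\int_s^t (\hat q_\tau^\epsilon(k))^2 d\tau \leq (t-s)/(t\lambda_k)$ together with $\lambda_k^{\alpha-1} \leq t^{1-\alpha}$, which holds since $\alpha - 1 < 0$ and $\lambda_k \geq 1/t$; this again gives the product $\lesssim (t-s)/t^\alpha$. Since $t-s \leq t$, we have $(t-s)/t^\alpha = (t-s)^{1-\alpha}\big((t-s)/t\big)^\alpha \leq (t-s)^{1-\alpha}$, which establishes the uniform bound on $K_{s,t}$.

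\textbf{Conclusion and obstacle.} Combining the uniform bound with the Cauchy--Schwarz step and enlarging the remaining integration from $[s,t]$ to $[0,t]$ delivers the stated inequality. The only subtlety is the two-regime case split: the restriction $\alpha \in [0,1)$ enters precisely through the requirement that $\lambda_k^{\alpha-1}$ be bounded by $t^{1-\alpha}$ on $\{\lambda_k \geq 1/t\}$, which fails at $\alpha = 1$ (and is the reason why the $\alpha = 1$ endpoint is treated separately in \eqref{eq:I-t-infty-uniform-in-h1}). Uniformity in $\epsilon$ is automatic because the only input used is the dispersion bound $-\hat\Delta^\epsilon(k) \gtrsim |k|^2$, which holds uniformly in the lattice spacing.
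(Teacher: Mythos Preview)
Your argument is correct: the Fourier expansion combined with Cauchy--Schwarz in the time variable reduces the estimate to the uniform symbol bound $\sup_k (1+|k|^2)^\alpha \int_s^t (\tau\lambda_k+1)^{-2}\,d\tau \lesssim (t-s)^{1-\alpha}$, and your two-regime split on $t\lambda_k$ establishes this cleanly. The paper does not give its own proof but defers to \cite[Lemma~4.3]{MR4665719}, where the argument is carried out in essentially the same way (Fourier representation, Cauchy--Schwarz in $\tau$, and a pointwise bound on $\int_s^t \hat q_\tau^2\,d\tau$ exploiting $\alpha<1$); your write-up is a faithful reconstruction of that approach.
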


\begin{lemma}
\label{lem:sobolev-norm-integrated-drift-2}
For any $\alpha \in (1,2]$ we have
\begin{equation}
\label{eq:h-alpha-norm-integrated-drift-small-scales-liouville}
\|I_{s,t}^\epsilon(u) \|_{H^\alpha(\Omega_\epsilon)}^2
\lesssim_\alpha \frac{t-s}{s^\alpha} \int_s^t \|u_\tau\|_{L^2(\Omega_\epsilon)}^2 d\tau .
\end{equation}
\end{lemma}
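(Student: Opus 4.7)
The plan is to work in Fourier variables on $\Omega_\epsilon^*$ and reduce the $H^\alpha$ bound on $I_{s,t}^\epsilon(u)$ to a pointwise estimate on the Fourier multiplier $\hat q_\tau^\epsilon(k)$. By Plancherel,
\begin{equation}
\| I_{s,t}^\epsilon(u) \|_{H^\alpha(\Omega_\epsilon)}^2
= \sum_{k\in \Omega_\epsilon^*}(1+|k|^2)^\alpha \Big| \int_s^t \hat q_\tau^\epsilon(k)\, \hat u_\tau(k) \, d\tau \Big|^2,
\end{equation}
and the first step is a Cauchy--Schwarz in the $d\tau$ integral with constant weight $1$, which peels off a factor of $(t-s)$ and yields
\begin{equation}
\label{eq:plan-post-cs}
\| I_{s,t}^\epsilon(u) \|_{H^\alpha(\Omega_\epsilon)}^2 \leq (t-s) \int_s^t \sum_{k\in \Omega_\epsilon^*} (1+|k|^2)^\alpha \hat q_\tau^\epsilon(k)^2 |\hat u_\tau(k)|^2 \, d\tau.
\end{equation}

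The heart of the argument is then the uniform multiplier bound
\begin{equation}
\label{eq:plan-pointwise}
\sup_{k \in \Omega_\epsilon^*} (1+|k|^2)^\alpha \hat q_\tau^\epsilon(k)^2 \lesssim_\alpha \tau^{-\alpha}, \qquad \tau>0,\ \alpha \in (1,2],
\end{equation}
uniform in $\epsilon$. I would derive \eqref{eq:plan-pointwise} from the explicit formula $\hat q_\tau^\epsilon(k) = \big(\tau(-\hat \Delta^\epsilon(k)+m^2)+1\big)^{-1}$ combined with the two-sided bound $1+|k|^2 \lesssim -\hat \Delta^\epsilon(k)+m^2 \lesssim 1+|k|^2$, which holds uniformly in $\epsilon$ for fixed $m>0$ since $-\hat \Delta^\epsilon(k)$ is comparable to $|k|^2$ on $\Omega_\epsilon^*$ (the lower bound already entered in \eqref{eq:I-t-infty-uniform-in-h1}). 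Setting $x := \tau(-\hat \Delta^\epsilon(k)+m^2) \geq 0$, the left-hand side of \eqref{eq:plan-pointwise} is bounded by a constant times $\tau^{-\alpha}\cdot x^\alpha/(x+1)^2$, and the scalar function $x \mapsto x^\alpha/(x+1)^2$ is bounded on $[0,\infty)$ precisely because $\alpha \leq 2$: it vanishes at the origin and decays like $x^{\alpha-2}$ at infinity.

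Plugging \eqref{eq:plan-pointwise} into \eqref{eq:plan-post-cs}, using Parseval to recognise $\sum_k |\hat u_\tau(k)|^2 = \|u_\tau\|_{L^2(\Omega_\epsilon)}^2$, and bounding $\tau^{-\alpha} \leq s^{-\alpha}$ on $[s,t]$, one obtains the claimed inequality upon integrating in $\tau$. The only non-routine step is the pointwise multiplier bound \eqref{eq:plan-pointwise}; the borderline case $\alpha=2$ is where one sees why the argument cannot be pushed beyond $\alpha \leq 2$ without further input, and the lower restriction $\alpha>1$ is a matter of sharpness rather than obstruction, since for $\alpha \leq 1$ the better prefactor $(t-s)^{1-\alpha}$ of Lemma \ref{lem:sobolev-norm-integrated-drift-1} already suffices and avoids the $s^{-\alpha}$ blow-up as $s\to 0$.
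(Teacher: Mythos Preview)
Your argument is correct and is the standard Fourier-side approach: Cauchy--Schwarz in $d\tau$ followed by the pointwise multiplier bound $(1+|k|^2)^\alpha \hat q_\tau^\epsilon(k)^2 \lesssim_\alpha \tau^{-\alpha}$, which is exactly what the reference \cite[Lemma~4.4]{MR4665719} does. The paper does not give its own proof here but simply cites that lemma, so there is nothing to compare beyond noting that your derivation matches the referenced one.
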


We continue with the proof the key estimates in Proposition \ref{prop:lvshg-all-scales-drift} and Proposition \ref{prop:lvshg-small-scales-drift}.
While the conclusion is the same, the argument is different for $\cE = \Lv$ and $\cE = \ShG$.
From here on, we discuss the cases $\cE= \Lv$ and $\cE= \ShG$ separately starting with $\cE= \Lv$.

\subsection{Liouville}
\label{ssec:lv-small-scales-drift}

The following observation is helpful on several occasions as it allows to reduce the discussion to a Gaussian setting.

\begin{lemma}
\label{eq:lv-int-genmin-negative}
Let $\cE= \Lv$ and $\epsilon>0$ and let $\genmin^\epsilon$ be a minimiser of \eqref{eq:conditional-boue-dupuis}.
Then, conditional on $\cF^t$, we have that $\genmin_s^\epsilon \leq 0$ for Lebesgue-a.e.\ $s\in [0,t]$.
In particular, conditional on $\cF^t$,
\begin{equation}
\label{eq:lv-int-drivt-genmin-negative}
I_t^\epsilon(\genmin^\epsilon) \leq 0.
\end{equation}
\end{lemma}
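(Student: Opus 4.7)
The plan is to reduce the statement to a componentwise nonnegativity property for the explicit minimiser produced by Proposition \ref{prop:bd-polchinski-correspondence}, and then verify that property using the structure of the Liouville potential together with the Pauli--Villars decomposition. First, I would argue uniqueness: the functional inside the infimum in \eqref{eq:conditional-boue-dupuis} is strictly convex in $u \in \HH_a[0,t]$, since the quadratic penalty $\tfrac{1}{2}\E[\int_0^t \|u_s\|_{L^2}^2 ds \mid \cF^t]$ is strictly convex and $v_0^{\Lv_\epsilon}$ is convex, so any two minimisers must agree $\omega$-a.s.\ in $L^2([0,t]\times \Omega_\epsilon)$. Combined with Proposition \ref{prop:bd-polchinski-correspondence}, the minimiser must therefore coincide, for Lebesgue-a.e.\ $s \in [0,t]$, with
\begin{equation*}
\genmin_s^\epsilon = - q_s^\epsilon \nabla v_s^{\Lv_\epsilon}(\Phi_s^{\Lv_\epsilon}).
\end{equation*}

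Next I would establish two componentwise positivity facts. By \eqref{eq:renormalised-gradient-ratio-of-expectations}, $\nabla v_s^{\Lv_\epsilon}(\phi)$ is a ratio of two Gaussian expectations weighted by the same positive density $e^{-v_0^{\Lv_\epsilon}(\phi+\zeta)}$, and since
\begin{equation*}
\partial_{\phi_x} v_0^{\Lv_\epsilon}(\phi) = \sqrt{\beta}\,\wick{\exp(\sqrt{\beta}\phi_x)}_\epsilon \geq 0,
\end{equation*}
this ratio is componentwise nonnegative for every $\phi \in X_\epsilon$. For the multiscale weight, the Pauli--Villars choice \eqref{eq:ct-pauli-villars} gives by direct differentiation $\dot c_s^\epsilon = s^{-2}(c_s^\epsilon)^2$, so the unique positive square root is $q_s^\epsilon = s^{-1} c_s^\epsilon$. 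Since $-\Delta^\epsilon + m^2 + 1/s$ is a discrete M-matrix (positive diagonal, non-positive off-diagonal, diagonally dominant), its inverse $c_s^\epsilon$ has nonnegative matrix entries, and hence so does $q_s^\epsilon$.

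Putting these observations together gives $q_s^\epsilon \nabla v_s^{\Lv_\epsilon}(\Phi_s^{\Lv_\epsilon}) \geq 0$ componentwise, so $\genmin_s^\epsilon \leq 0$ for a.e.\ $s \in [0,t]$ a.s., and in particular conditional on $\cF^t$; then $I_t^\epsilon(\genmin^\epsilon) = \int_0^t q_s^\epsilon \genmin_s^\epsilon \, ds$ is the image under the positivity-preserving kernel $q_s^\epsilon$, integrated against a nonpositive function, so \eqref{eq:lv-int-drivt-genmin-negative} follows. The piece I expect to be most delicate is the positivity-preservation of $q_s^\epsilon$: it really uses that in the Pauli--Villars decomposition the positive square root of $\dot c_s^\epsilon$ is itself a resolvent of the discrete Laplacian, rather than merely a symmetric square root of $\dot c_s^\epsilon$. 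An alternative route that bypasses both the uniqueness step and the explicit formula would be to test \eqref{eq:conditional-boue-dupuis} against the competitor $\genmin^\epsilon \wedge 0$, using componentwise monotonicity of $v_0^{\Lv_\epsilon}$ together with the same positivity of $q_s^\epsilon$, and then invoke strict convexity to force the original minimiser to coincide with its nonpositive part.
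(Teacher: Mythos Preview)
Your primary argument is correct: uniqueness of the minimiser via strict convexity of the functional, identification with the explicit $\bar u_s^\epsilon = -q_s^\epsilon \nabla v_s^{\Lv_\epsilon}(\Phi_s^{\Lv_\epsilon})$ from Proposition \ref{prop:bd-polchinski-correspondence}, componentwise nonnegativity of $\nabla v_s^{\Lv_\epsilon}$ from \eqref{eq:renormalised-gradient-ratio-of-expectations}, and the observation that for the Pauli--Villars decomposition $q_s^\epsilon = s^{-1}c_s^\epsilon$ inherits entrywise nonnegativity from the resolvent. All of these steps are sound.

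The paper, however, takes precisely what you call the ``alternative route'': it compares any minimiser $u^\epsilon$ with the competitor $v^\epsilon = u^\epsilon \mathbf{1}_{u^\epsilon \leq 0}$, uses entrywise positivity of $q_s^\epsilon$ to get $I_t^\epsilon(v^\epsilon) \leq I_t^\epsilon(u^\epsilon)$, and monotonicity of $v_0^{\Lv_\epsilon}$ to see that both the potential term and the quadratic term weakly decrease under this replacement. Minimality then forces equality of the sums, hence equality of each term separately, and in particular the quadratic terms agree, giving $\E[\int_0^t \mathbf{1}_{u_s^\epsilon > 0}\|u_s^\epsilon\|_{L^2}^2 \,ds \mid \cF^t] = 0$. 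Note that the paper does \emph{not} invoke strict convexity or uniqueness at all; the two-sided inequality already does the work. Your approach is more structural (it exploits the explicit form of the minimiser and its uniqueness), while the paper's is slightly more elementary and applies to any minimiser without identifying it first. Both rely on the same key fact you correctly flagged as delicate: that $q_s^\epsilon$ is positivity-preserving, which is specific to the Pauli--Villars choice.
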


\begin{proof}
We consider the process $\mgenmin = \genmin^\epsilon \mathbf{1}_{\genmin^\epsilon\leq 0}$ and note that $\mgenmin$ is adapted and satisfies
\begin{equation}
\int_0^t \| \mgenmin_s \|_{L^2(\Omega_\epsilon)}^2 ds
\leq \int_0^t \| \genmin_s^\epsilon \|_{L^2(\Omega_\epsilon)}^2 ds
< \infty \qquad \text{a.s.}
\end{equation}
Thus, $\mgenmin \in \HH_a[0,t]$, and moreover, since $q_t^\epsilon$ has positive entries, we have
\begin{equation}
I_t^\epsilon (\mgenmin ) \leq I_t^\epsilon(\genmin^\epsilon).
\end{equation}
Since $v_0^{\Lv_\epsilon}$ is increasing, we have
\begin{align}
\E \Big[v_0^{\Lv_\epsilon} \big (Y_t^\epsilon + \Phi_t^{\Lv_\epsilon} +  I_t^\epsilon( \mgenmin )\big)
&+ \frac{1}{2} \int_0^t \| \mgenmin_s \|_{L^2}^2 ds \bigm| \cF^t\Big]
\nnb
&\leq
\E \Big[v_0^{\Lv_\epsilon} \big (Y_t^\epsilon + \Phi_t^{\Lv_\epsilon} +  I_t^\epsilon(\genmin^\epsilon)\big) + \frac{1}{2} \int_0^t \| \genmin_s^\epsilon\|_{L^2}^2 ds \bigm| \cF^t\Big],
\end{align}
and thus, since $\genmin^\epsilon$ is a minimiser of \eqref{eq:conditional-boue-dupuis},
the last display holds with equality.
It follows that
\begin{equation}
\E \Big[ \int_0^t \| \mgenmin_s\|_{L^2}^2 ds \bigm| \cF^t\Big]
= \E \Big[ \int_0^t \| \genmin^{\epsilon}_s\|_{L^2}^2 ds \bigm| \cF^t\Big] ,
\end{equation}
and thus,
\begin{equation}
\E \Big[ \int_0^t \mathbf{1}_{\genmin_s^\epsilon > 0} \| \genmin_s^{\epsilon} \|_{L^2}^2 ds \bigm| \cF^t\Big] = 0.
\end{equation}
We deduce that, conditional on $\cF^t$, we have
\begin{equation}
\int_0^t \mathbf{1}_{\genmin_s^\epsilon > 0} \| \genmin_s^{\epsilon} \|_{L^2}^2 ds = 0,
\end{equation}
which shows that $\genmin_s^\epsilon \leq 0$ for Lebesgue-a.e.\ $s\in [0,t]$.
The estimate \eqref{eq:lv-int-drivt-genmin-negative} then follows from the fact that $q_t^\epsilon$ has positive entries.
\end{proof}

Using the previous result, we can now give the proof of Proposition \ref{prop:lvshg-all-scales-drift}.

\begin{proof}[Proof of Proposition \ref{prop:lvshg-all-scales-drift} for $\cE = \Lv$]
By comparision of $\genmin^\epsilon$ with the trivial drift $u=0$ and using $v_0^{\cE_\epsilon} \geq 0$, we have
\begin{equation}
\E\big[ \int_0^t \| \genmin_s^\epsilon \|_{L^2}^2 ds \bigm | \cF^t \big] \leq 
\E\big[ v_0^{\Lv_\epsilon}(Y_t^\epsilon + \Phi_t^{\Lv_\epsilon}) \bigm| \cF^t \big]  .
\end{equation}
Since $v_0^{\Lv_\epsilon}$ is increasing in the field variables and moreover $Y_t^\epsilon + \Phi_t^{\Lv_\epsilon} \leq Y_\infty^\epsilon$ by Proposition \ref{prop:lvshg-coupling-eps} for $\cE= \Lv$,
it follows that
\begin{equation}
\E\big[ v_0^{\Lv_\epsilon} (Y_t^\epsilon + \Phi_t^{\cE_\epsilon}\big) \big] \leq \E[v_0^{\Lv_\epsilon} (Y_\infty^\epsilon ) \big]  < \infty,
\end{equation}
and the last right hand side is bounded uniformly in $\epsilon>0$.
\end{proof}

Before entering in the proof of Proposition \ref{prop:lvshg-all-scales-drift} for $\cE = \Lv$, we put forward the following result on the regularity of the Wick ordered exponential of the Gaussian free field.
The proof of this result is given in Appendix \ref{app:wickexp-regularity}.

\begin{lemma}
\label{lem:sobolev-norms-liouville-finite}
For $\delta \in (0, 1- \beta /4 \pi)$, we have
\begin{equation}
\label{eq:sobolev-norms-liouville-finite}
\sup_{\epsilon>0} \E\big[  \| \wick{\exp\big(\sqrt{\beta}Y_\infty\big)}_\epsilon \|_{H^{-1+\delta}( \Omega_\epsilon)}^2 \big] < \infty
\end{equation}
\end{lemma}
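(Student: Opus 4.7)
The plan is to reduce the Sobolev norm estimate to the standard two-point function of the exponential multiplicative chaos. Writing $V^\epsilon_x = \wick{\exp(\sqrt{\beta}Y_\infty(x))}_\epsilon = \epsilon^{\beta/4\pi}\exp(\sqrt{\beta}Y_\infty(x))$ and using Parseval, one has
\begin{equation}
\E\big[\|V^\epsilon\|_{H^{-1+\delta}(\Omega_\epsilon)}^2\big] = \sum_{k\in\Omega_\epsilon^*}(1+|k|^2)^{-1+\delta}\,\E[|\widehat{V^\epsilon}(k)|^2],
\end{equation}
so the task is to get a uniform-in-$\epsilon$ bound on $\E[\widehat{V^\epsilon}(k)\overline{\widehat{V^\epsilon}(k)}]$.

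The second step is to compute the two-point function. Since $Y_\infty\sim \nu_m^{\GFF_\epsilon}$ is centred Gaussian with covariance $c_\infty^\epsilon$, one has
\begin{equation}
\E[V^\epsilon_x V^\epsilon_y] = \epsilon^{2\beta/4\pi}\exp\!\Big(\tfrac{\beta}{2}\bigl(c_\infty^\epsilon(x,x)+c_\infty^\epsilon(y,y)\bigr)+\beta c_\infty^\epsilon(x,y)\Big).
\end{equation}
Feeding in the variance asymptotic \eqref{eq:scaling-gff}, the factors $\epsilon^{2\beta/4\pi}$ and $\exp(\tfrac{\beta}{2}(c_\infty^\epsilon(x,x)+c_\infty^\epsilon(y,y)))$ cancel up to an $O_m(1)$ multiplicative constant, and using the standard short-distance estimate $c_\infty^\epsilon(x,y)\leq \tfrac{1}{2\pi}\log\tfrac{1}{|x-y|\vee\epsilon}+O_m(1)$ for the lattice massive Green's function yields, uniformly in $\epsilon>0$,
\begin{equation}
\E[V^\epsilon_x V^\epsilon_y]\ \lesssim_m\ \frac{1}{(|x-y|\vee\epsilon)^{\beta/2\pi}}.
\end{equation}

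In the third step I would rewrite the expected norm in physical space via Parseval,
\begin{equation}
\E\big[\|V^\epsilon\|_{H^{-1+\delta}}^2\big] = \int_{\Omega_\epsilon}\!\!\int_{\Omega_\epsilon} \E[V^\epsilon_x V^\epsilon_y]\,K^\epsilon_{1-\delta}(x-y)\,dx\,dy,
\end{equation}
where $K^\epsilon_{1-\delta}$ is the kernel with Fourier multiplier $(1+|k|^2)^{-1+\delta}$. Since $1-\delta\in(0,1)$ and $d=2$, the corresponding Bessel kernel satisfies $K^\epsilon_{1-\delta}(z)\lesssim |z|^{-2\delta}$ at short distances (with exponential tails), and this bound transfers uniformly to the discrete setting. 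Combining with the two-point estimate gives
\begin{equation}
\sup_{\epsilon>0}\E\big[\|V^\epsilon\|_{H^{-1+\delta}(\Omega_\epsilon)}^2\big] \lesssim_m \int_\Omega\!\!\int_\Omega \frac{dx\,dy}{|x-y|^{\beta/2\pi+2\delta}},
\end{equation}
where the right-hand side is the limit of the discrete sums as $\epsilon\to 0$. This integral is finite on the bounded domain $\Omega=\T^2$ precisely when $\beta/2\pi+2\delta<2$, i.e.\ $\delta<1-\beta/4\pi$, which is the hypothesis.

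The main obstacle is to make all estimates uniform in $\epsilon>0$. The delicate point is the diagonal region $|x-y|\leq\epsilon$, where the two-point function saturates at $\epsilon^{-\beta/2\pi}$ and the kernel at $\epsilon^{-2\delta}$: one has to check that the $\epsilon^2$ volume factor per lattice point, combined with a summation over $\Omega_\epsilon$, reproduces the same integrability threshold $\delta<1-\beta/4\pi$, and does not introduce logarithmic divergences at the endpoint. The alternative Fourier-side route — bounding $\E[|\widehat{V^\epsilon}(k)|^2]\lesssim_m (1+|k|)^{\beta/2\pi-2}$ via the Fourier asymptotics of the Riesz kernel $|z|^{-\beta/2\pi}$ on $\R^2$ (valid since $\beta/2\pi<2$) — leads to the same summability criterion and serves as a useful cross-check.
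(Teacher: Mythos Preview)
Your argument is correct and leads to the same integrability threshold $\delta<1-\beta/4\pi$, but it proceeds differently from the paper. The paper works with a dyadic Littlewood--Paley decomposition: it introduces smooth Fourier projectors $\Delta_j^\epsilon$ with kernels $K_j^\epsilon$, writes
\[
\E\big[\|V^\epsilon\|_{H^{-1+\delta}}^2\big]\ \lesssim\ \sum_{j\geq -1} 2^{-2j(1-\delta)}\int_{\Omega_\epsilon\times\Omega_\epsilon} K_j^\epsilon(x)K_j^\epsilon(y)\,|x-y|^{-\beta/2\pi}\,dx\,dy,
\]
and then controls each scale by Young's inequality, using only the bounds $\|K_j^\epsilon\|_{L^1}\lesssim 1$, $\|K_j^\epsilon\|_{L^\infty}\lesssim 2^{2j}$ and interpolation, together with $\|\,|\cdot|^{-\beta/2\pi}\,\|_{L^{1+\rho}}<\infty$ for suitable $\rho$. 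Your route instead goes through the global Bessel-type kernel $K^\epsilon_{1-\delta}$ and its pointwise asymptotic $\lesssim |z|^{-2\delta}$, reducing the question to a single Riesz-kernel integral. Your approach is shorter and more transparent about the exponent arithmetic; the paper's Littlewood--Paley route avoids having to justify the discrete Bessel kernel asymptotic (the step you flag as ``transfers uniformly to the discrete setting''), since it only needs the elementary $L^1/L^\infty$ bounds on smooth dyadic kernels. Both approaches feed the same two-point estimate $\E[V^\epsilon_xV^\epsilon_y]\lesssim_m (|x-y|\vee\epsilon)^{-\beta/2\pi}$, and both extend to the sinh-Gordon case by replacing the Gaussian exponential-moment computation with Brascamp--Lieb.
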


We now give the proof of the small scale estimate on minimisers $u^\epsilon$ in the case $\cE = \Lv$.

\begin{proof}[Proof of Proposition \ref{prop:lvshg-small-scales-drift} for $\cE = \Lv$]
We first note that
\begin{equation}
\label{eq:lv-estimate-for-lvdrift}
\E \Big[ \int_0^t \|\genmin_s^\epsilon\|_{L^2(\Omega_\epsilon)}^2 ds \bigm | \cF^t \Big]
\leq \sqrt{\beta} \E\big[ - \int_{\Omega_\epsilon} I_t^\epsilon(\genmin^\epsilon) \wick{\exp( \sqrt{\beta} Y_\infty^\epsilon )}_\epsilon dx \bigm | \cF^t \big].
\qquad \text{a.s.}
\end{equation}
Indeed, with the notation introduced in \eqref{eq:lvshg-coupling-eps} and \eqref{eq:Y-definition} we have
\begin{equation}
Y_t^\epsilon + \Phi_t^{\Lv_\epsilon} = Y_\infty^\epsilon + \Phi_t^{\Delta_\epsilon},
\end{equation}
where $Y_\infty^\epsilon$ is the Gaussian free field.
Since $I_t^\epsilon(\genmin^\epsilon)\leq 0$ conditional on $\cF^t$ by Lemma \ref{eq:lv-int-genmin-negative} and $\Phi_t^{\Delta_\epsilon} \leq 0$ by Proposition \ref{prop:lvshg-coupling-eps} for $\cE= \Lv$,
the estimate \eqref{eq:lv-estimate-for-lvdrift} follows from \eqref{eq:lvmin-small-scales-pre-estimate} and the monotonicity of the exponential function.

By duality, we have from \eqref{eq:lv-estimate-for-lvdrift} for any $\delta \in (0,1- \beta/4\pi)$
\begin{align}
\label{eq:lv-duality-estimate-for-small-scales}
0 \leq \E\Big[ - \int_{\Omega_\epsilon} I_t^\epsilon(\genmin^\epsilon) &\wick{\exp (\sqrt{\beta}  Y_\infty^\epsilon ) }_\epsilon dx \bigm | \cF^t \Big]
\nnb
&\leq \E\big[ \|I_t^\epsilon(\genmin^\epsilon)\|_{H^{1-\delta}(\Omega_\epsilon)} \|\wick{\exp \big(\sqrt{\beta}Y_\infty ) \big) }_\epsilon \|_{H^{-1 +\delta}(\Omega_\epsilon) }  \mid \cF^t \big] .
\end{align}
Applying the Cauchy-Schwarz inequality with respect to the conditional expectation to the last right hand side and using \eqref{eq:small-scales-integrated-drift-h-norm} gives \eqref{eq:lvshg-small-scales-drift} with 
\begin{equation}
\cW_{\delta,t}^{\Lv_\epsilon}
= \E\Big[ \|\wick{\exp\big(\sqrt{\beta}Y_\infty^\epsilon\big)}_\epsilon \|_{H^{-1+\delta}(\Omega_\epsilon)}^2 \bigm| \cF^t \Big]^{1/2}.
\end{equation}
By Jensen's inequality, we then have
\begin{equation}
\E \big[ \cW_{\delta,t}^{\Lv_\epsilon}  \big] \leq \E\Big[ \|\wick{\exp\big(\sqrt{\beta}Y_\infty^\epsilon\big)}_\epsilon \|_{H^{-1+\delta}(\Omega_\epsilon)}^2 \Big]^{1/2}.
\end{equation}
which is independent of $t>0$ and uniform in $\epsilon>0$ by Lemma \ref{lem:sobolev-norms-liouville-finite}.
\end{proof}

\subsection{sinh-Gordon}
\label{ssec:shg-small-scales-drift}

It is tempting to use the decomposition
\begin{equation}
Y_t^\epsilon + \Phi_t^{\ShG_\epsilon} = Y_\infty^\epsilon + I_{t,\infty}^\epsilon(\shgmin^\epsilon),
\end{equation}
where $Y_\infty^\epsilon$ is the full Gaussian free field and $\shgmin^\epsilon$ is as in \eqref{eq:lvshg-minimiser},
and proceed as for $\cE= \Lv$.
However, for $\cE= \ShG$, the field $I_{t,\infty}^\epsilon(\shgmin^\epsilon)$ has no definite sign.
Instead, we use that law of $\Phi_t^{\ShG_\epsilon}$ is centred for all $t\geq 0$.
We use this observation together with an argument based on the Brascamp-Lieb inequailty to further estimate the right hand side in \eqref{eq:shgmin-small-scales-pre-estimate}.
The main observation here is carry out the analysis directly with the field $Y_t^\epsilon + \Phi_t^{\ShG_\epsilon}$ .
For this to work, we need the following result, which allows to apply the Brascamp-Lieb inequality to exponential moments of $Y_t^\epsilon + \Phi_t^{\ShG_\epsilon}$

\begin{proposition}
\label{prop:density-tggf-tshg}
For $\epsilon>0$ and $t\geq 0$ let $\nu^{\ShG_\epsilon,t}$ be the law of $Y_t^\epsilon + \Phi_t^{\ShG_\epsilon}$.
Then
\begin{equation}
\label{eq:density-tggf-tshg}
d \nu^{\ShG_\epsilon,t}(\phi) \propto e^{-H_t^\epsilon(\phi)} d \phi
\end{equation}
with $H_t^\epsilon(\phi) = H_t^\epsilon(-\phi)$ and $\He H_t^\epsilon \geq (-\Delta^\epsilon + m^2)$.
\end{proposition}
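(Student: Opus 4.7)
The plan is to write the density of $Y_t^\epsilon + \Phi_t^{\ShG_\epsilon}$ as a convolution of the two known independent pieces, extract an explicit Gaussian factor with inverse covariance $-\Delta^\epsilon + m^2$ via completion of the square, and transfer the evenness and convexity of the sinh-Gordon potential to the residual non-Gaussian factor.

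First, combine Propositions \ref{prop:lvshg-coupling-eps} and \ref{prop:law-polchinski-sde}: $Y_t^\epsilon\sim N(0,c_t^\epsilon)$ is independent of $\Phi_t^{\ShG_\epsilon}$, and $\Phi_t^{\ShG_\epsilon}\sim \nu_t^{\ShG_\epsilon}$ has density \eqref{eq:lvshg-density-renormalised-measure}. Convolving the two densities in the variable $\psi$ gives
\[
p(\phi) \propto \int e^{-v_t^{\ShG_\epsilon}(\psi)} \exp\bigl(-\tfrac12 (\phi-\psi)(c_t^\epsilon)^{-1}(\phi-\psi) -\tfrac12 \psi (c_\infty^\epsilon-c_t^\epsilon)^{-1}\psi\bigr)\, d\psi.
\]
Using the Pauli-Villars choice \eqref{eq:ct-pauli-villars}, the operators $c_t^\epsilon$, $c_\infty^\epsilon-c_t^\epsilon$ and $c_\infty^\epsilon$ are all functions of $-\Delta^\epsilon+m^2$, hence commute. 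A direct completion of the square in $\psi$ then decomposes the quadratic form as $\tfrac12 \phi(-\Delta^\epsilon+m^2)\phi + \tfrac12 (\psi-K_t\phi)\Sigma_t^{-1}(\psi-K_t\phi)$, where $K_t = (c_\infty^\epsilon-c_t^\epsilon)(c_\infty^\epsilon)^{-1}$ and $\Sigma_t = c_t^\epsilon(c_\infty^\epsilon-c_t^\epsilon)(c_\infty^\epsilon)^{-1}$, the extracted quadratic term arising from the identity $(c_t^\epsilon)^{-1} - K_t\Sigma_t^{-1}K_t = (c_\infty^\epsilon)^{-1} = -\Delta^\epsilon+m^2$. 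Translating $\psi\to\psi+K_t\phi$ and integrating out $\psi$ puts the density in the claimed form \eqref{eq:density-tggf-tshg} with
\[
H_t^\epsilon(\phi) = \tfrac12 \phi(-\Delta^\epsilon+m^2)\phi + g_t^\epsilon(K_t\phi), \qquad g_t^\epsilon(\chi) := -\log \EE_{\Sigma_t}\bigl[e^{-v_t^{\ShG_\epsilon}(\zeta+\chi)}\bigr].
\]

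For the symmetry $H_t^\epsilon(-\phi)=H_t^\epsilon(\phi)$, observe that $v_0^{\ShG_\epsilon}$ is even in $\phi$ because $\cosh$ is, and the renormalisation \eqref{eq:lvshg-renormalised-potential} preserves evenness by the substitution $\zeta\to-\zeta$ under the symmetric Gaussian measure; thus $v_t^{\ShG_\epsilon}$ is even, and running the same argument once more shows $g_t^\epsilon$ is even. Together with the invariance of the quadratic form, this gives the claim. For the Hessian bound, Lemma \ref{lem:lvshg-he-v-pos-definite} provides $\He v_t^{\ShG_\epsilon}\geq 0$, and the identical Polchinski-type computation used there, applied now to $g_t^\epsilon$, yields $\He g_t^\epsilon\geq 0$. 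Since $K_t$ is a real Fourier multiplier and hence self-adjoint,
\[
\He H_t^\epsilon(\phi) = (-\Delta^\epsilon+m^2) + K_t\,\He g_t^\epsilon(K_t\phi)\,K_t \geq -\Delta^\epsilon+m^2.
\]

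The only non-routine step is the algebraic decomposition in the second paragraph; it relies crucially on the commutativity of $c_t^\epsilon$ and $c_\infty^\epsilon-c_t^\epsilon$ afforded by the Pauli-Villars decomposition, without which the cross terms would not collapse to the clean bound $-\Delta^\epsilon+m^2$. Once that decomposition is in hand, evenness and convexity transfer to $H_t^\epsilon$ by a direct inheritance from the corresponding properties of $v_t^{\ShG_\epsilon}$ established in Lemma \ref{lem:lvshg-he-v-pos-definite}.
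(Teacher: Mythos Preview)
Your proof is correct and reaches the same conclusion by the same convolution setup, but you organise the computation differently from the paper. The paper pulls out only the factor $\tfrac12\varphi c_t^{-1}\varphi$, writes the remainder $U_t$ as a log-integral, differentiates twice to get $\He U_t=-c_t^{-1}\cov_{\mu^\varphi}(\zeta)c_t^{-1}$, bounds the covariance by Brascamp--Lieb, and only then carries out the operator algebra showing $c_t^{-1}-c_t^{-1}\bigl(c_t^{-1}+(c_\infty-c_t)^{-1}\bigr)^{-1}c_t^{-1}=c_\infty^{-1}$. You instead front-load this same algebraic identity by completing the square so that the full quadratic $\tfrac12\phi(-\Delta^\epsilon+m^2)\phi$ drops out immediately, and the leftover $g_t^\epsilon$ is then convex simply by a second invocation of the convexity-preservation principle behind Lemma~\ref{lem:lvshg-he-v-pos-definite}. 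Your route is slightly more economical and makes clear that no new analytic input beyond Lemma~\ref{lem:lvshg-he-v-pos-definite} is needed; the paper's route keeps the Brascamp--Lieb covariance inequality explicit, which is thematically consistent with the rest of the article. The two arguments are equivalent at the level of ideas---the Brascamp--Lieb bound and the preservation of log-concavity under Gaussian convolution are two faces of the same Pr\'ekopa--Leindler fact---and both rely on the commutativity of $c_t^\epsilon$ and $c_\infty^\epsilon$ afforded by the Pauli--Villars decomposition.
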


\begin{proof}
To ease the notation we drop $\epsilon>0$ from the notation.
Let $\nu_1$ be the law of $Y_t$ and let $\nu_2$ be the law of $\Phi_t^{\ShG}$.
Note that we have
\begin{equation}
d \nu_1(\varphi) \propto e^{- \frac{1}{2}\varphi c_t^{-1} \varphi} d\varphi,
\qquad \text{and} \qquad
d \nu_2(\varphi) \propto e^{-v_t^\ShG(\varphi) - \frac{1}{2} \varphi (c_\infty - c_t)^{-1} \varphi} d\varphi.
\end{equation}
Since $Y_t$ and $\Phi_t^\ShG$ are independent, we have $\nu^{\ShG,t} = \nu_1*\nu_2$, and thus, 
\begin{align}
d \nu (\varphi)
&\propto \Big( 
\int e^{- \frac{1}{2}(\zeta-\varphi) c_t^{-1} (\zeta-\varphi)} 
e^{-v_t^\ShG(\zeta) - \frac{1}{2} \zeta (c_\infty - c_t)^{-1} \zeta} d \zeta
\Big) d\varphi
\nnb
&= e^{-\frac{1}{2} \varphi c_t^{-1} \varphi}
\Big(
\int e^{ -\frac{1}{2}\zeta \big( c_t^{-1} + (c_\infty - c_t)^{-1}  \big) \zeta + \varphi c_t^{-1} \zeta - v_t^\ShG(\zeta) } d\zeta 
\Big)
= e^{-\big( \frac{1}{2} \varphi c_t^{-1} \varphi + U_t(\varphi) \big)} d\varphi,
\end{align}
where we set
\begin{equation}
\label{eq:u-t-definition}
U_t(\varphi) = - \log \Big( \int e^{ -\frac{1}{2}\zeta \big( c_t^{-1} + (c_\infty - c_t)^{-1}  \big) \zeta + \varphi c_t^{-1} \zeta - v_t(\zeta) } d\zeta
\Big) .
\end{equation}
Since $v_t^\ShG(\zeta) = v_t^\ShG(-\zeta)$, we have that $U_t(\varphi) = U_t(-\varphi)$, and thus, the law $\nu^{\ShG,t}$ is centred.

To prove the lower bound on $\He H_t$, we need to find a lower bound on $\He U_t$.
Differentiation of \eqref{eq:u-t-definition} gives
\begin{equation}
\nabla U_t(\varphi)
= - c_t^{-1}
\frac{\int \zeta e^{ -\frac{1}{2}\zeta \big( c_t^{-1} + (c_\infty - c_t)^{-1}  \big) \zeta + \varphi c_t^{-1} \zeta - v_t^\ShG(\zeta) } d\zeta}{\int e^{ -\frac{1}{2}\zeta \big( c_t^{-1} + (c_\infty - c_t)^{-1}  \big) \zeta + \varphi c_t^{-1} \zeta - v_t^\ShG(\zeta) } d\zeta} ,
\end{equation}
and further
\begin{equation}
\He U_t (\varphi) = - c_t^{-1} \cov_{\mu^\varphi}(\zeta) c_t^{-1} ,
\end{equation}
where the measure $d\mu^\varphi$ is defined by
\begin{equation}
d \mu^\varphi(\zeta) \propto e^{-\frac{1}{2}\zeta \big( c_t^{-1} + (c_\infty - c_t)^{-1}  \big) \zeta + \varphi c_t^{-1} \zeta - v_t^\ShG(\zeta)} d\zeta.
\end{equation}
Thus, we have
\begin{equation}
\He H_t (\varphi) = c_t^{-1} + \He U_t = 
c_t^{-1} - c_t^{-1} \cov_{\mu^\varphi}(\zeta) c_t^{-1}.
\end{equation}
To find a lower bound on $\He H_t$, we need to find an upper bound on $\cov_{\mu^\varphi}(\zeta)$.
To this end, we first define
\begin{equation}
W_t^\varphi(\zeta) = \frac{1}{2}\zeta \big( c_t^{-1} + (c_\infty - c_t)^{-1}  \big) \zeta - \varphi c_t^{-1} \zeta + v_t^\ShG(\zeta).
\end{equation}
Since $\He v_t^{\ShG} \geq 0$ by Lemma \ref{lem:lvshg-he-v-pos-definite}, we have
\begin{equation}
\He W_t^\varphi (\zeta) = c_t^{-1} + (c_\infty - c_t)^{-1} + \He v_t^\ShG(\varphi) \geq c_t^{-1} + (c_\infty - c_t)^{-1}.
\end{equation}
Thus, we have, by the Brascamp-Lieb inequality \cite[Theorem 4.1]{MR0450480},
\begin{equation}
\cov_{\mu^\varphi} (\zeta) \leq \big(c_t^{-1} + (c_\infty - c_t)^{-1} \big)^{-1} .
\end{equation}
Therefore, we have
\begin{equation}
\label{eq:he-h-first-lower-bound}
\He H_t (\varphi) \geq 
c_t^{-1} - c_t^{-1} \big(c_t^{-1} + (c_\infty - c_t)^{-1} \big)^{-1}c_t^{-1}.
\end{equation}
We now show that the last right hand side is equal to $(-\Delta +m^2)$.
To this end, we first observe
\begin{equation}
c_\infty - c_t = (- \Delta + m^2)^{-1} - (-\Delta + m^2 + 1/t)^{-1} = \frac{1}{t} (- \Delta + m^2)^{-1} (- \Delta + m^2+ 1/t)^{-1},
\end{equation}
and thus,
\begin{equation}
(c_\infty - c_t)^{-1} = t c_\infty^{-1} c_t^{-1}.
\end{equation}
Then, we have from \eqref{eq:he-h-first-lower-bound}
\begin{align}
\He H_t (\varphi) &\geq c_t^{-1} - c_t^{-1}\big( c_t^{-1} + t c_\infty^{-1} c_t^{-1}\big)^{-1}c_t^{-1}
= 
c_t^{-1} - \big( \id + t c_\infty^{-1}\big)^{-1}c_t^{-1}
\nnb
&= c_t^{-1} \big( \id - (\id + tc_\infty^{-1})^{-1} \big) 
= c_t^{-1} (\id +tc_\infty^{-1})^{-1} \big( \id+tc_\infty^{-1} - \id \big)
\nnb
&= 
t c_t^{-1} (\id + tc_\infty^{-1})^{-1} c_\infty^{-1} = c_\infty^{-1} =
(-\Delta + m^2),
\end{align}
where we used that
\begin{equation}
(\id + tc_\infty^{-1}) = t( -\Delta + m^2 + 1/t) = t c_t^{-1}.
\end{equation}
\end{proof}

We can now give the proof of Proposition \ref{prop:lvshg-all-scales-drift} for $\cE = \ShG$.

\begin{proof}[Proof of Proposition \ref{prop:lvshg-all-scales-drift} for $\cE = \ShG$]
As in the proof of Proposition \ref{prop:lvshg-all-scales-drift} for $\cE = \Lv$,
we compare with the trivial drift $u=0$ and use $v_0^{\ShG_\epsilon} \geq 0$ to obtain
\begin{equation}
\E\big[ \int_0^t \| \genmin_s^\epsilon \|_{L^2}^2 ds \bigm | \cF^t \big] \leq 
\E\big[ v_0^{\ShG_\epsilon}(Y_t^\epsilon + \Phi_t^{\ShG_\epsilon}) \bigm| \cF^t \big]. 
\end{equation}
Now, we use Proposition \ref{prop:density-tggf-tshg} and the Brascamp-Lieb inequality for exponential moments to obtain
\begin{equation}
\E\big[ v_0^{\ShG_\epsilon} \big(Y_t^\epsilon + \Phi_t^{\ShG_\epsilon} \big) \big]
\leq \E[v_0^{\ShG_\epsilon} (Y_\infty^\epsilon ) \big]  < \infty,
\end{equation}
where the last right hand side is bounded uniformly in $\epsilon>0$.
\end{proof}

Another consequence of Proposition \ref{prop:density-tggf-tshg} is the following statement on the regularity of the Wick ordered exponentials under the law $\nu^{\ShG_\epsilon,t}$
Its proof is a variation of the proof of Lemma \ref{lem:sobolev-norms-liouville-finite} and also presented in Appendix \ref{app:wickexp-regularity}.

\begin{lemma}
\label{lem:sobolev-norms-sinh-finite}
For $\delta \in (0, 1- \beta /4 \pi)$, we have
\begin{equation}
\label{eq:sobolev-norms-sinh-finite}
\E\big[  \| \wick{\exp\big(\pm\sqrt{\beta}(Y_t^\epsilon + \Phi_t^{\ShG_\epsilon})\big)}_\epsilon \|_{H^{-1+\delta}}^2 \big] \leq
\E\big[  \| \wick{\exp\big(\pm\sqrt{\beta}Y_\infty^\epsilon \big)}_\epsilon \|_{H^{-1+\delta}}^2 \big] .
\end{equation}
In particular,
\begin{equation}
\sup_{\epsilon \geq 0} \sup_{t\geq 0} \E\big[  \| \wick{\exp\big(\pm \sqrt{\beta} (Y_t^\epsilon + \Phi_t^{\ShG_\epsilon}) \big)}_\epsilon \|_{H^{-1+\delta}}^2 \big] < \infty .
\end{equation}
\end{lemma}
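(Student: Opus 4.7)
The plan is to reduce the statement to the Gaussian case handled in Lemma~\ref{lem:sobolev-norms-liouville-finite} via a pointwise Brascamp--Lieb comparison, exploiting that by Proposition~\ref{prop:density-tggf-tshg} the law $\nu^{\ShG_\epsilon,t}$ of $Y_t^\epsilon+\Phi_t^{\ShG_\epsilon}$ is even and log-concave with Hessian bounded below by $c_\infty^{-1}=(-\Delta^\epsilon+m^2)$, which is precisely the inverse covariance of $Y_\infty^\epsilon$. Since $Y_\infty^\epsilon$ is a centred Gaussian with this covariance, the Brascamp--Lieb inequality for exponential moments of linear functionals of even log-concave measures (cf.\ \cite{MR0450480}) yields, for every linear functional $L$ on $X_\epsilon$, the bound
\begin{equation}
\E_{\nu^{\ShG_\epsilon,t}}[e^{L(\phi)}] \leq \E_{\nu^{\GFF_\epsilon}}[e^{L(\phi)}].
\end{equation}

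To use this, I would first represent the Sobolev norm as a double sum with a positive kernel. Expanding in Fourier and using $(1+|k|^2)^{-(1-\delta)}=\frac{1}{\Gamma(1-\delta)}\int_0^\infty u^{-\delta}e^{-u(1+|k|^2)}du$ together with the positivity of the discrete heat kernel $e^{u\Delta^\epsilon}(x,y)$, one sees that the kernel $K_\epsilon(x,y)$ of $(1-\Delta^\epsilon)^{-(1-\delta)}$ on $\Omega_\epsilon$ is non-negative. Combined with the identity $\wick{e^{\pm\sqrt{\beta}\phi(x)}}_\epsilon\wick{e^{\pm\sqrt{\beta}\phi(y)}}_\epsilon=\epsilon^{\beta/2\pi}e^{\pm\sqrt{\beta}(\phi(x)+\phi(y))}$, this gives
\begin{equation}
\|\wick{e^{\pm\sqrt{\beta}\phi}}_\epsilon\|_{H^{-1+\delta}}^2 = \epsilon^{4+\beta/2\pi}\sum_{x,y\in\Omega_\epsilon} K_\epsilon(x,y)\, e^{\pm\sqrt{\beta}(\phi(x)+\phi(y))}.
\end{equation}
Because every summand is non-negative, Tonelli allows interchanging expectation and sum, and applying the Brascamp--Lieb bound above with the linear functional $L(\phi)=\pm\sqrt{\beta}(\phi(x)+\phi(y))$ to each pair $(x,y)$ yields the desired inequality \eqref{eq:sobolev-norms-sinh-finite} upon reassembling the sum. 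The uniform-in-$(t,\epsilon)$ finiteness then follows from Lemma~\ref{lem:sobolev-norms-liouville-finite}, noting that symmetry of the GFF makes the right-hand side independent of the sign.

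The main technical point is simply to invoke the correct form of the Brascamp--Lieb inequality for exponential moments (valid for even log-concave measures), which is the same ingredient already used in the proof of Proposition~\ref{prop:lvshg-all-scales-drift} for $\cE=\ShG$; the positivity of the Bessel-type kernel $K_\epsilon$ is standard but worth recording, as it is what enables the pointwise comparison to survive the double summation.
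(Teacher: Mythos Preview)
Your approach is correct in substance and in fact gives a cleaner route to the main conclusion than the paper's own argument. The paper repeats the Littlewood--Paley computation from the proof of Lemma~\ref{lem:sobolev-norms-liouville-finite} and inserts the Brascamp--Lieb inequality at the step where the two-point exponential moment $\E[e^{\sqrt{\beta}(\phi_x+\phi_y)}]$ is evaluated; since the Littlewood--Paley kernels $K_j^\epsilon$ are not of definite sign, this yields the uniform finiteness (after taking absolute values and applying Young's inequality) rather than the sharp inequality \eqref{eq:sobolev-norms-sinh-finite} with constant~$1$. Your idea of writing the Sobolev norm through a single \emph{non-negative} Bessel-type kernel is precisely what allows the termwise Brascamp--Lieb comparison to survive the double sum and produce a genuine inequality.

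One small conflation should be fixed: the paper's discrete Sobolev norm is defined via the multiplier $(1+|k|^2)^{-(1-\delta)}$, while the positivity you invoke (through the Markov semigroup $e^{u\Delta^\epsilon}$) pertains to the kernel of $(1-\Delta^\epsilon)^{-(1-\delta)}$, whose multiplier is $(1-\hat\Delta^\epsilon(k))^{-(1-\delta)}$. These give equivalent norms with constants independent of $\epsilon$, so the uniform bound in the ``In particular'' part is unaffected; but your displayed identity with an equals sign, and hence \eqref{eq:sobolev-norms-sinh-finite} with constant~$1$, only hold verbatim for the latter norm. For the norm as the paper defines it, positivity of $\sum_{k\in\Omega_\epsilon^*}(1+|k|^2)^{-(1-\delta)}e^{ik\cdot z}$ on the discrete torus does not follow from the heat-semigroup argument, since the truncated Gaussian sum $\sum_{k\in\Omega_\epsilon^*}e^{-u|k|^2}e^{ik\cdot z}$ is not the lattice heat kernel and need not be non-negative. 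This is cosmetic for the application in the paper, where only the uniform finiteness is used.
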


With the regularity estimates at hand, we can now give the proof of Proposition \ref{prop:lvshg-small-scales-drift} for $\cE = \ShG$.

\begin{proof}[Proof of Proposition \ref{prop:lvshg-small-scales-drift} for $\cE = \ShG$]
By duality, we have from \eqref{eq:shgmin-small-scales-pre-estimate} for any $\delta \in (0,1- \beta/4\pi)$
\begin{align}
\label{eq:shg-estimate-duality}
0 \leq \E\Big[ &- \int_{\Omega_\epsilon} I_t^\epsilon(\genmin^\epsilon) \wick{\sinh (\sqrt{\beta}  Y_t^\epsilon + \Phi_t^{\ShG_\epsilon}) }_\epsilon dx \bigm | \cF^t \Big]
\nnb
&\leq 
\E\Big[ \| I_t^\epsilon(\genmin^\epsilon)\|_{H^{1-\delta}} \Big(\|\wick{\exp \big(\sqrt{\beta}(Y_t^\epsilon + \Phi_t^{\ShG_\epsilon} ) \big) }_\epsilon \|_{H^{-1 +\delta} }
\nnb
&\qquad \qquad \qquad  \qquad \qquad + \|\wick{\exp \big(-\sqrt{\beta}(Y_t^\epsilon + \Phi_t^{\ShG_\epsilon} ) \big) }_\epsilon \|_{H^{-1 +\delta} }  \Big) \bigm| \cF^t \Big] .
\end{align}
Applying the Cauchy-Schwarz inequality with respect to the conditional expectation to the last right hand side and using \eqref{eq:small-scales-integrated-drift-h-norm} gives \eqref{eq:lvshg-small-scales-drift} for $\cE= \ShG$ with 
\begin{align}
\cW_{\delta,t}^{\ShG_\epsilon}
&= \E\Big[ \|\wick{\exp\big(\sqrt{\beta}(Y_t^\epsilon + \Phi_t^{\ShG_\epsilon})\big)}_\epsilon \|_{H^{-1+\delta}}^2 \bigm| \cF^t \Big]^{1/2}
\nnb
&\qquad  +
\E\Big[ \|\wick{\exp\big(-\sqrt{\beta}(Y_t^\epsilon + \Phi_t^{\ShG_\epsilon}) \big)}_\epsilon \|_{H^{-1+\delta}}^2 \bigm| \cF^t \Big]^{1/2}
.
\end{align}
By Jensen's inequality, we then have
\begin{align}
\E \big[ \cW_{\delta,t}^{\ShG_\epsilon} \big]
&\leq \E\Big[ \|\wick{\exp\big(\sqrt{\beta}( Y_t^\epsilon + \Phi_t^{\ShG_\epsilon})\big)}_\epsilon \|_{H^{-1+\delta}}^2 \Big]^{1/2}
\nnb
&\qquad + \E\Big[ \|\wick{\exp\big(-\sqrt{\beta}( Y_t^\epsilon + \Phi_t^{\ShG_\epsilon})\big)}_\epsilon \|_{H^{-1+\delta}}^2 \Big]^{1/2} ,
\end{align}
which is bounded uniformly in $t\geq 0$ and $\epsilon>0$ by Lemma \ref{lem:sobolev-norms-sinh-finite}.
\end{proof}

\section{Proof of the main results}
\label{sec:proof-main-results}

In this section, we give the proof of the main results, Theorem \ref{thm:coupling-pphi-to-gff-eps} and Corollary \ref{cor:pphi-coupling-continuum}.
To this end, we apply the estimates on generic minimisers to the specific minimiser $\lvshgmin^\epsilon$ in \eqref{eq:lvshg-minimiser} and recall that
\begin{equation}
\Phi_t^{\Delta_\epsilon} = I_{t, \infty}^\epsilon(\lvshgmin^\epsilon), \qquad \Phi_0^{\Delta_\epsilon} - \Phi_t^{\Delta_\epsilon} = I_t^\epsilon(\lvshgmin^\epsilon).
\end{equation}
To prove the estimates in Theorem \ref{thm:coupling-pphi-to-gff-eps}, we decompose the field $\Phi_t^{\Delta_\epsilon}$ in different scales, which motivates the notation
\begin{equation}
\Phi_{s,t}^{\Delta_\epsilon} = -\int_s^t \dot c_\tau^\epsilon \nabla v_\tau^{\cE_\epsilon}(\Phi_\tau^{\cE_\epsilon}) d\tau.
\end{equation}
The proof of Theorem \ref{thm:coupling-pphi-to-gff-eps} is now identical for $\cE = \Lv$ and $\cE= \ShG$.

\begin{proof}[Proof of Theorem \ref{thm:coupling-pphi-to-gff-eps}]
With the coupling established in Proposition \ref{prop:lvshg-coupling-eps},
it remains to prove the bounds on the difference field $\Phi^{\Delta_\epsilon}$.
We deduce these from the bounds for the short scales of minimisers in Proposition \ref{prop:lvshg-small-scales-drift}, applied to  the particular choice $\genmin^\epsilon = \lvshgmin^\epsilon$.
The bounds \eqref{eq:phi-delta-h1} and \eqref{eq:phi-delta-h2} follow from Lemma \ref{lem:sobolev-norm-integrated-drift-1} and Lemma \ref{lem:sobolev-norm-integrated-drift-2} together with Lemma \ref{prop:lvshg-all-scales-drift}.

To show the estimate \eqref{eq:phi-delta-1-2},
we first note that, for $\alpha\in [1,2-\beta/4\pi)$ and $0 < s\leq t$, we have by Lemma \ref{lem:sobolev-norm-integrated-drift-2}
\begin{equation}
\| \Phi_{s,t}^{\Delta_\epsilon} \|_{H^\alpha}
\lesssim \Big( \frac{t-s}{s^\alpha} \int_s^t \|\lvshgmin_\tau^\epsilon \|_{L^2}^2 d\tau \Big)^{1/2}.
\end{equation}
Thus, by Proposition \ref{prop:lvshg-small-scales-drift}, we have
\begin{align}
\label{eq:phi-s-t-expectation-h-alpha}
\E \big[ \| \Phi_{s,t}^{\Delta_\epsilon}\|_{H^\alpha} \big]
\lesssim
\Big(\frac{t-s}{s^\alpha} \Big)^{1/2} \E\Big[  \E\Big[ \Big( \int_s^t \| \lvshgmin_\tau^\epsilon\|_{L^2}^2 d\tau \Big)^{1/2} \bigm| \cF^t \Big] \Big]
\lesssim
\Big(\frac{t-s}{s^\alpha} \Big)^{1/2} t^{\delta/2}.
\end{align}
Let $(t_n)_{\N_0}$ be a sequence with $t_n \to 0$ as $n\to \infty$.
Then, the triangle inequality together with \eqref{eq:phi-s-t-expectation-h-alpha} and \eqref{eq:phi-delta-h2} gives, for any $\delta \in (0,1-\beta/4\pi)$,
\begin{align}
\label{eq:phi-0-delta-exp-sobolev-norm-upper-by-triangle}
\E\big[ \| \Phi_0^{\Delta_\epsilon} \|_{H^{\alpha}} \big]
&\leq 
\sum_{n\in \N} \E\big[ \| \Phi_{t_{n+1},t_n}^{\Delta_\epsilon} \|_{H^\alpha} \big]
+ \E\big[ \| \Phi_{t_0}^{\Delta_\epsilon}\|_{H^\alpha} \big]
\nnb
&\leq \sum_{n\in \N} \Big( \frac{t_n - t_{n+1}}{t_{n+1}^\alpha} \Big)^{1/2} t_n^{\delta/2} + C . 
\end{align}
With the choices $t_n=2^{-n}$ and $\delta\in (\alpha-1, 1-\beta/4\pi)$,
we have
\begin{equation}
\Big( \frac{t_n - t_{n+1}}{t_{n+1}^\alpha} \Big)^{1/2} t_n^{\delta/2}
= 
\big(2^{-(n+1) +n\alpha} 2^{-\delta n} \big)^{1/2} = 2^{-1/2}2^{-\frac{1}{2} n(1-\alpha +\delta)} ,
\end{equation}
and thus, the sum on the right hand side of \eqref{eq:phi-0-delta-exp-sobolev-norm-upper-by-triangle} is finite.

The convergence \eqref{eq:phi-delta-to-0} is obtained from a similar reasoning reasoning.
We first note that
\begin{equation}
\Phi_0^{\Delta_\epsilon} - \Phi_t^{\Delta_\epsilon} = \int_0^t \dot c_s^\epsilon \nabla v_t^{\cE_\epsilon}(\Phi_s^{\cE_\epsilon}) ds .
\end{equation}
For $t>0$ let $(\tau_n)_{n\in \N_0}$ be defined by $\tau_n = t2^{-n}$ and choose $\delta \in (\alpha-1, 1- \beta/4\pi)$.
Then the same estimates that lead to \eqref{eq:phi-0-delta-exp-sobolev-norm-upper-by-triangle} now give
\begin{align}
\E\big[ \| \Phi_0^{\Delta_\epsilon} - \Phi_t^{\Delta_\epsilon} \|_{H^\alpha} \big]
&\lesssim \sum_{n\in \N} \Big( \frac{\tau_n - \tau_{n+1}}{\tau_{n+1}^\alpha} \Big)^{1/2} \tau_n^{\delta/2}
= t^{\frac{1}{2}(1-\alpha + \delta) } \sum_{n\in \N} 2^{-1/2}2^{-\frac{1}{2} n(1-\alpha +\delta)} \lesssim t^{\frac{1}{2}(1-\alpha + \delta)} ,
\end{align}
which shows the claimed convergence as $t\to 0$.
\end{proof}


The next results is the convergence of the process $\Phi^{\Delta_\epsilon}$ as $\epsilon \to 0$ to a process $\Phi^{\Delta_0}$ along a suitable subsequence.
We deduce this convergence from the tightness of the sequence $(\Phi^{\Delta_\epsilon})_\epsilon$, which follows form the Arz\`ela-Ascoli theorem.
This is completely analogous to \cite[Proposition 5.5]{MR4665719}, and we include it here for completeness.
To have all lattice fields taking values in the same space, we use the isometric embedding $I_\epsilon \colon L^2(\Omega_\epsilon) \to L^2(\Omega)$,
which is obtained from extending the Fourier series of a given function $f\in X_\epsilon$ to $k\in \Omega^* = 2\pi \Z^2$,
i.e., for $\Phi^\epsilon \in X_\epsilon$ with Fourier series
\begin{equation}
\Phi^\epsilon (x) = \sum_{k\in \Omega_\epsilon^*} \hat r^\epsilon(k) e^{ikx} ,
\end{equation}
we have that $I_\epsilon \Phi^\epsilon$ has Fourier coefficients $\hat r^\epsilon(k)$ for $k\in \Omega_\epsilon^*$ and vanishing Fourier coefficients for $k\in \Omega^*\setminus \Omega_\epsilon^*$.

\begin{proposition}
\label{prop:weak-convergence-phi-delta-eps}
Let $\alpha<1$. Then $(I_\epsilon \Phi^{\Delta_\epsilon})_\epsilon$ is a tight sequence of processes in $C_0([0,\infty), H^{\alpha}(\Omega))$.
In particular, there is a process $\Phi^{\Delta_0} \in C_0([0,\infty), H^{\alpha}(\Omega))$ and a subsequence $(\epsilon_k)_k$, $\epsilon_k \to 0$ as $k\to\infty$
such that the laws of $\Phi^{\Delta_{\epsilon_k}}$ on $C_0([0,\infty), H^{\alpha}(\Omega))$ converge weakly to the law of $\Phi^{\Delta_0}$.
\end{proposition}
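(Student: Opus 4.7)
The plan is to establish tightness of the sequence $(I_\epsilon \Phi^{\Delta_\epsilon})_\epsilon$ in $C_0([0,\infty), H^\alpha(\Omega))$ by verifying the three standard criteria for a functional Arzel\`a-Ascoli theorem: tightness of the marginals at each fixed $t$, equicontinuity on compact time-intervals, and uniform smallness at infinity. Once tightness is established, Prokhorov's theorem delivers a weakly convergent subsequence, whose limit we call $\Phi^{\Delta_0}$.

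For the marginal tightness at each $t\geq 0$, I would invoke the uniform bound \eqref{eq:phi-delta-h1}, which gives $\sup_{\epsilon>0} E[\|I_\epsilon \Phi_t^{\Delta_\epsilon}\|_{H^1(\Omega)}^2]<\infty$. Combined with the Rellich compact embedding $H^1(\Omega)\hookrightarrow H^\alpha(\Omega)$ for $\alpha<1$ and Chebyshev's inequality, this produces a compact set in $H^\alpha(\Omega)$ carrying arbitrarily large probability uniformly in $\epsilon$. For equicontinuity on $[0,T]$, I would use, for $s\leq t$, the representation $\Phi_s^{\Delta_\epsilon}-\Phi_t^{\Delta_\epsilon} = I_{s,t}^\epsilon(\lvshgmin^\epsilon)$ together with Lemma \ref{lem:sobolev-norm-integrated-drift-1} and Proposition \ref{prop:lvshg-all-scales-drift}, producing an increment bound
\begin{equation*}
\sup_{\epsilon>0} E\big[\|I_\epsilon(\Phi_s^{\Delta_\epsilon}-\Phi_t^{\Delta_\epsilon})\|_{H^\alpha(\Omega)}^2\big] \lesssim_\alpha (t-s)^{1-\alpha}.
\end{equation*}
This Hölder-type control of increments would then be upgraded to a uniform modulus-of-continuity estimate via a dyadic chaining argument (or Kolmogorov's continuity criterion after raising the moment, using hypercontractivity bounds for the Wick exponentials that already enter the proof of Proposition \ref{prop:lvshg-small-scales-drift}), yielding the required estimate $\sup_{\epsilon>0} P(\sup_{|s-t|<\delta,\,s,t\in[0,T]}\|I_\epsilon(\Phi_s^{\Delta_\epsilon}-\Phi_t^{\Delta_\epsilon})\|_{H^\alpha}>\eta)\to 0$ as $\delta\to 0$.

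For uniform vanishing at infinity, I would exploit $\Phi_t^{\Delta_\epsilon}=I_{t,\infty}^\epsilon(\lvshgmin^\epsilon)$ together with the $H^1$-bound \eqref{eq:I-t-infty-uniform-in-h1}, which gives $\|\Phi_t^{\Delta_\epsilon}\|_{H^1}^2\lesssim\int_t^\infty\|\lvshgmin_\tau^\epsilon\|_{L^2}^2\,d\tau$. Since $\sup_{\epsilon>0} E[\int_0^\infty\|\lvshgmin_\tau^\epsilon\|_{L^2}^2\,d\tau]<\infty$ by Proposition \ref{prop:lvshg-all-scales-drift}, dominated convergence yields $\sup_{\epsilon>0}E[\|\Phi_t^{\Delta_\epsilon}\|_{H^1}]\to 0$ as $t\to\infty$; interpolating with the equicontinuity estimate of the previous step controls the supremum over $t\geq T$ uniformly in $\epsilon$. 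The three ingredients together imply tightness of $(I_\epsilon \Phi^{\Delta_\epsilon})_\epsilon$ in $C_0([0,\infty),H^\alpha(\Omega))$, and Prokhorov's theorem supplies the announced weakly convergent subsequence.

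The main technical obstacle is the equicontinuity step: the natural second-moment increment bound has exponent $1-\alpha<1$, so Kolmogorov's criterion does not apply directly at $p=2$. Passing to higher moments requires either a chaining argument combined with $L^p$-control of $\int\|\lvshgmin_\tau^\epsilon\|_{L^2}^2\,d\tau$, or a suitable hypercontractive upgrade of the regularity bounds of Lemma \ref{lem:sobolev-norms-liouville-finite} and Lemma \ref{lem:sobolev-norms-sinh-finite}; either route must be carried out with estimates that are uniform in the lattice spacing $\epsilon>0$.
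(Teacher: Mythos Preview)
Your strategy is in the right spirit but misses the decisive simplification, and as a consequence you manufacture an obstacle that does not exist. Lemma \ref{lem:sobolev-norm-integrated-drift-1} is a \emph{deterministic, pathwise} inequality: for every realisation of the drift,
\[
\|\Phi_s^{\Delta_\epsilon}-\Phi_t^{\Delta_\epsilon}\|_{H^\alpha(\Omega_\epsilon)}^2
= \|I_{s,t}^\epsilon(\lvshgmin^\epsilon)\|_{H^\alpha}^2
\lesssim_\alpha (t-s)^{1-\alpha}\int_0^\infty \|\lvshgmin_\tau^\epsilon\|_{L^2}^2\,d\tau,
\]
and likewise $\sup_{t\geq 0}\|\Phi_t^{\Delta_\epsilon}\|_{H^\alpha}^2 \lesssim \int_0^\infty\|\lvshgmin_\tau^\epsilon\|_{L^2}^2\,d\tau$. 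Hence the entire path $I_\epsilon\Phi^{\Delta_\epsilon}$ lies in the Arzel\`a--Ascoli--type set
\[
\cX_R=\Big\{\Phi\in C_0([0,\infty),H^\alpha):\ \sup_t\|\Phi_t\|_{H^\alpha}^2\leq R,\ \sup_{s<t}\frac{\|\Phi_t-\Phi_s\|_{H^\alpha}^2}{(t-s)^{1-\alpha}}\leq R\Big\}
\]
as soon as the single random variable $\int_0^\infty\|\lvshgmin_\tau^\epsilon\|_{L^2}^2\,d\tau$ is below $R/C$. A single application of Markov's inequality together with Proposition \ref{prop:lvshg-all-scales-drift} then gives $\sup_\epsilon\P(I_\epsilon\Phi^{\Delta_\epsilon}\notin\overline{\cX_R})\lesssim 1/R$, and tightness follows. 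This is exactly the paper's argument.

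Your plan to pass from a second-moment increment bound to a modulus of continuity via Kolmogorov or chaining is therefore unnecessary; moreover, the higher-moment or hypercontractive inputs you would need for that route (uniform $L^p$ control of $\int_0^\infty\|\lvshgmin_\tau^\epsilon\|_{L^2}^2\,d\tau$ for $p>1$) are not established anywhere in the paper, so as written your argument has a genuine gap at precisely the step you flagged. The fix is not to close that gap but to bypass it by using the pathwise nature of the estimate.
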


\begin{proof}
For $R>0$ and $\alpha<1$, let
\begin{align}
\cX_R
=
\Big \{ \Phi \in C_0([0,\infty),H^\alpha(\Omega)) \colon \sup_{t \in [0,\infty)} \|\Phi\|_{H^\alpha}^2 \leq R \text{ and } \sup_{s< t} \frac{\|\Phi_t - \Phi_s\|_{H^\alpha}^2}{(t-s)^{1-\alpha}} \leq R \Big\}.	
\end{align}
The set $\cX_R$ is bounded and equicontinuous with respect to the norm $\|\cdot \|_{H^\alpha}$, and thus, by the Arz\`ela-Ascoli theorem, the closure $\cX_R$ is compact.
Moreover, we have
\begin{align}
\sup_{t\geq 0}\|I_\epsilon \Phi_t^{\Delta_\epsilon} \|_{H^\alpha}^2 & \lesssim \int_0^\infty \|\lvshgmin_\tau^\epsilon\|_{L^2}^2 d\tau ,
\\
\|I_\epsilon \Phi_t^{\Delta_\epsilon} - I_\epsilon \Phi_s^{\Delta_\epsilon} \|_{H^\alpha}^2
&\lesssim (t-s)^{1-\alpha} \int_0^\infty \| \lvshgmin_\tau^\epsilon\|_{L^2}^2 d\tau ,
\end{align}
and thus, we have as in the proof of \cite[Theorem 1.1]{MR4665719} for some constant $C>0$
\begin{align}
\P(I_\epsilon\Phi^{\Delta_\epsilon} \in \overline{\cX_R}^c )
&\leq 
\P ( I_\epsilon \Phi^{\Delta_\epsilon} \in \cX_R^c )
\leq 
\P \Big( \sup_{t \in [0,\infty)} \|\Phi_t^{\Delta_\epsilon} \|_{H^\alpha}^2 +
 \sup_{s< t} \frac{\|\Phi_t^{\Delta_\epsilon} - \Phi_s^{\Delta_\epsilon} \|_{H^\alpha}^2}{(t-s)^{1-\alpha}} > R \Big)
\nnb
&\leq \P \Big( \int_0^\infty \|\lvshgmin_\tau^\epsilon\|_{L^2}^2 d\tau > R/C \Big) \leq
\frac{C}{R} \E \Big[ \int_0^\infty \|\lvshgmin_\tau^\epsilon \|_{L^2}^2 d\tau \Big].
\end{align}
By Proposition \ref{prop:lvshg-all-scales-drift}, for a given $\kappa>0$, we choose $R$ large enough such that
\begin{equation}
\sup_{\epsilon>0} \P ( I_\epsilon \Phi^{\Delta_\epsilon} \in \overline{\cX_R}^c ) \leq \frac{2}{R} \sup_{\epsilon>0} \E \Big[ \int_0^\infty \|\lvshgmin_\tau\|_{L^2}^2 d\tau \Big] <\kappa,
\end{equation}
which establishes tightness for the sequence $(I_\epsilon\Phi^{\Delta_\epsilon})_\epsilon \subseteq C_0([0,\infty), H^{\alpha}(\Omega))$. The existence of a weak limit $\Phi^{\Delta_0}$ then follows by Prohorov's theorem.
\end{proof}

Next, we discuss the convergence of the marginals of the process $(\Phi^{\cE_\epsilon})_{\epsilon}$ for $t\geq 0$,
thereby showing that every weak limit obtained from Proposition \ref{prop:weak-convergence-phi-delta-eps} has the same law at least for a fixed $t\geq 0$.
Before, we state state and prove the following preliminary convergence result for the Gaussian multiplicative chaos.
To this end, we set
\begin{equation}
M^{\epsilon}(\phi) = \int_{\Omega_\epsilon} e^{\sqrt{\beta}\phi_x - \frac{\beta}{2}c_\infty(x,x) } dx
\end{equation}
as well as
\begin{equation}
\bar M(\phi) 
= \frac{1}{2} \big( M^{\epsilon,+}(\phi) + M^{\epsilon,-} (\phi)\big)
= \frac{1}{2} \int_{\Omega_\epsilon} \big(e^{\sqrt{\beta}\phi_x - \frac{\beta}{2} c_\infty(x,x)} + e^{-\sqrt{\beta}\phi_x - \frac{\beta}{2} c_\infty (x,x)} \big) dx .
\end{equation}
Below, we consider these objects when $\phi= Y_\infty^\epsilon$, in which case we write $M^\epsilon(Y_\infty^\epsilon) \equiv M^\epsilon$ and 
$\bar M^\epsilon(Y_\infty^\epsilon) \equiv \bar M^\epsilon$,
where we recall that all random fields $(Y_\infty^\epsilon)_\epsilon$ are realised on the same probability space. 
The following result gives convergence in $L^1$ as $\epsilon \to 0$ to limiting random variables $M$ and $\bar M$.
We state it here for $\beta \in (0,4\pi)$ and make the remark that the same holds true also for $\beta\in (0,8\pi)$.
For the later use, we also define
\begin{equation}
e^{-v_\infty^{\Lv_0}(0)} = \E\big[ e^{- \lambda M }\big],
\qquad
e^{-v_\infty^{\ShG_0}(0)} = \E\big[ e^{- \lambda \bar M }\big],
\end{equation}
where $\lambda>0$ is as in \eqref{eq:lv-density} and \eqref{eq:shg-density}.

\begin{lemma}
\label{lem:wick-exponential-convergence}
Let $\beta \in (0,4\pi)$. There exist non-negative random variables $M$ and $\bar M$,
such that as $\epsilon \to 0$, we have
\begin{align}
\label{eq:wick-exponential-convergence}
M^{\epsilon} \to M \qquad \text{and} \qquad \bar M^{\epsilon} \to \bar M \qquad \text{in $L^1$.}
\end{align}
\end{lemma}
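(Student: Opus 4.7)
The plan is to prove that $(M^\epsilon)_\epsilon$ is Cauchy in $L^2(\P)$ through explicit Gaussian moment computations; the $L^1$ convergence to a limit $M\in L^2(\P)$ then follows immediately. The analogous argument applied to $(M^{\epsilon,-})_\epsilon$ yields the convergence of $\bar M^\epsilon$ to $\bar M$. The $L^2$-phase assumption $\beta<4\pi$ is used precisely to ensure the integrability of the relevant double integrals of $e^{\beta\cdot(\text{covariance})}$.

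The Gaussian exponential moment formula yields
\begin{equation}
\E[(M^\epsilon)^2] = \int_{\Omega_\epsilon}\int_{\Omega_\epsilon} e^{\beta c_\infty^\epsilon(x,y)}\, dx\, dy,
\qquad
\E[M^\epsilon M^{\epsilon'}] = \int_{\Omega_\epsilon}\int_{\Omega_{\epsilon'}} e^{\beta K^{\epsilon,\epsilon'}(x,y)}\, dx\, dy,
\end{equation}
where $K^{\epsilon,\epsilon'}(x,y) := \cov(Y_\infty^\epsilon(x), Y_\infty^{\epsilon'}(y))$ is well-defined because all $(Y_\infty^\epsilon)_\epsilon$ are jointly realised through the common cylindrical Brownian motion $W$. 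Both $c_\infty^\epsilon(x,y)$ and $K^{\epsilon,\epsilon'}(x,y)$ admit the uniform logarithmic upper bound $\tfrac{1}{2\pi}\log\tfrac{1}{|x-y|} + O_m(1)$ for $x\neq y$, so the integrands are dominated by $C_m\, |x-y|^{-\beta/2\pi}$, which is integrable over $\Omega\times\Omega$ exactly because $\beta/2\pi < 2$. This gives the uniform bound $\sup_\epsilon \E[(M^\epsilon)^2] < \infty$.

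Using that $c_\infty^\epsilon(x,y)$ and $K^{\epsilon,\epsilon'}(x,y)$ converge pointwise for $x\neq y$ to the continuum Green's function $G(x,y) = (-\Delta+m^2)^{-1}(x,y)$ as $\epsilon, \epsilon'\to 0$, the dominated convergence theorem with the above envelope yields
\begin{equation}
\E[(M^\epsilon - M^{\epsilon'})^2] = \E[(M^\epsilon)^2] + \E[(M^{\epsilon'})^2] - 2\E[M^\epsilon M^{\epsilon'}] \to 0
\end{equation}
as $\epsilon, \epsilon' \to 0$. Hence $(M^\epsilon)$ is Cauchy in $L^2(\P)$ and converges in $L^2$, a fortiori in $L^1$, to a limit $M$. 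The identical computation applies to $M^{\epsilon,-}$, since $-Y_\infty^\epsilon$ has the same covariance as $Y_\infty^\epsilon$; this yields $M^{\epsilon,-}\to M^-$ in $L^1$, and therefore $\bar M^\epsilon \to \bar M := \tfrac{1}{2}(M + M^-)$ in $L^1$.

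The principal technical point is the control of the joint covariance $K^{\epsilon,\epsilon'}$ uniformly in both mesh sizes together with its pointwise convergence to $G$. Via the explicit form of the Pauli--Villars decomposition \eqref{eq:ct-pauli-villars} and the joint construction from $W$, both $c_\infty^\epsilon$ and $K^{\epsilon,\epsilon'}$ admit expressions as scale integrals of the square-root densities $(\dot c_s^\epsilon)^{1/2}$ and can be compared through Fourier analysis of the lattice Laplacians; the fact that $-\hat\Delta^\epsilon(k) \to |k|^2$ uniformly on bounded sets provides both the uniform envelope and the pointwise limit, using similar arguments as in the proof of Lemma \ref{lem:sobolev-norms-liouville-finite}.
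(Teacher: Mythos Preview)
Your argument is correct and follows the classical direct route in the $L^2$ phase: compute second moments and cross moments explicitly, dominate by $|x-y|^{-\beta/2\pi}$, and conclude that $(M^\epsilon)_\epsilon$ is Cauchy in $L^2$. The paper proceeds differently: it invokes a general GMC convergence theorem (\cite[Theorem~25]{MR3475456}) and verifies its hypotheses, using the $L^2$ bound only to check uniform integrability, with the remaining conditions deferred to \cite[Lemma~2.7]{HofstetterZeitouni2025Liouville}. Your approach is more self-contained and avoids external machinery, but it hinges on the uniform off-diagonal bound and pointwise convergence of the \emph{cross}-covariance $K^{\epsilon,\epsilon'}$ between approximations on different lattices; you correctly flag this as the main technical point, and under the Fourier-projection coupling of the $W^\epsilon$ to the common cylindrical Brownian motion it can indeed be carried out, though it is not entirely trivial (a crude Cauchy--Schwarz on $K^{\epsilon,\epsilon'}$ does not suffice, one really needs the scale-integral representation you indicate). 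The paper's approach sidesteps this cross-lattice computation by appealing to the abstract theorem, and has the incidental advantage that the cited result also covers the full subcritical range $\beta\in(0,8\pi)$, as the paper remarks.
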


\begin{proof}
We deduce the convergence from \cite[Theorem 25]{MR3475456}.
To this end, we need to verify three conditions.
To verify the first condition we show that $(M^\epsilon)_\epsilon$ is bounded in $L^2$, which implies uniform integrability.
To this end, we note that
\begin{equation}
 \E\big[(M^\epsilon \big)^2\big] \lesssim \int_{\Omega_\epsilon\times \Omega_\epsilon} |x-y|^{- \beta/2\pi} dx dy ,
\end{equation}
and the last right hand side is bounded uniformly in $\epsilon>0$ for $\beta \in (0,4\pi)$.
The second and the third condition are verified in the proof of \cite[Lemma 2.7]{HofstetterZeitouni2025Liouville}.
\end{proof}

\begin{proposition}
\label{prop:weak-convergence-nu-t}
As $\epsilon \to 0$, we have for $t>0$ and as measures on $H^\alpha(\Omega)$ for every $\alpha <1$,
that
$(I_{\epsilon})_*\nu_t^{\cE_\epsilon}$ 
converges weakly to $\nu_t^\cE$ given by
\begin{align}
\label{eq:nu-t-continuum}
\E_{\nu_{t}^\cE}[F]
=
e^{v_\infty^{\cE_0}(0)}  \E \big[ F(Y_\infty - Y_t) e^{-v_t^\cE(Y_\infty-Y_t)} \big]
\end{align}
for $F\colon H^\alpha (\Omega) \to \R$ bounded and measurable.

Moreover, for $t=0$, the weak convergence $(I_{\epsilon})_*\nu_0^{\cE_\epsilon} \to \nu_0^\cE$ holds as measures on $H^\alpha(\Omega)$ for any $\alpha <0$ and with $\nu_0^\cE$ defined by \eqref{eq:nu-t-continuum} with $t=0$ and $F\colon H^{\alpha}(\Omega) \to \R$ for $\alpha <0$.
\end{proposition}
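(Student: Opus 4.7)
The plan is to pass to the limit $\epsilon\to 0$ in the finite-$\epsilon$ formula \eqref{eq:lvshg-renormalised-measure} for bounded continuous test functions on the appropriate Sobolev space, combining Lemma~\ref{lem:wick-exponential-convergence} for the prefactor with weak convergence of the tilted Gaussian. As a parallel check, Proposition~\ref{prop:law-polchinski-sde} identifies $(I_\epsilon)_*\nu_t^{\cE_\epsilon}$ with the law of $I_\epsilon\Phi_t^{\cE_\epsilon} = I_\epsilon\Phi_t^{\Delta_\epsilon} + I_\epsilon\Phi_t^{\GFF_\epsilon}$, and joint weak convergence of the two summands follows from Proposition~\ref{prop:weak-convergence-phi-delta-eps} and standard convergence of the lattice GFF driven by the same cylindrical Brownian motion $W$.

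For $t>0$ I would work on $H^\alpha(\Omega)$ with $\alpha<1$. The covariance $c_\infty^\epsilon - c_t^\epsilon$ has Fourier multiplier $\frac{1}{t(|k|^2+m^2)(|k|^2+m^2+1/t)}$, summable against $(1+|k|^2)^\alpha$ uniformly in $\epsilon$, so $I_\epsilon\Phi_t^{\GFF_\epsilon}$ is tight in this space and converges weakly to $\Phi_t^{\GFF_0}$. The prefactor $e^{v_\infty^{\cE_\epsilon}(0)}$ equals $1/\E[e^{-\lambda M^\epsilon}]$ for Liouville (and the analogous expression with $\bar M^\epsilon$ for sinh-Gordon) by the semigroup identity, and converges to its continuum counterpart by Lemma~\ref{lem:wick-exponential-convergence}. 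For the tilted Gaussian integral, note that $v_0^{\cE_\epsilon}\geq 0$ implies $v_t^{\cE_\epsilon}\geq 0$ and hence $e^{-v_t^{\cE_\epsilon}}\leq 1$, so it is enough to establish convergence in probability of $v_t^{\cE_\epsilon}(\zeta)\to v_t^\cE(\zeta)$ for samples $\zeta\sim \Phi_t^{\GFF_\epsilon}$, which follows by applying Lemma~\ref{lem:wick-exponential-convergence} to the chaos appearing in the Gaussian integral \eqref{eq:lvshg-renormalised-potential}. The case $t=0$ is analogous on $H^\alpha(\Omega)$ with $\alpha<0$: the lattice GFF $I_\epsilon Y_\infty^\epsilon$ converges to the continuum GFF in this space by classical arguments, and $e^{-v_0^{\cE_\epsilon}(Y_\infty^\epsilon)}$ is exactly $e^{-\lambda M^\epsilon}$ (resp.\ $e^{-\lambda \bar M^\epsilon}$), which converges in $L^1$ by Lemma~\ref{lem:wick-exponential-convergence}. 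A straightforward dominated convergence argument on bounded measurable $F$ then yields weak convergence to $\nu_0^\cE$.

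The main obstacle is the convergence $v_t^{\cE_\epsilon}(\zeta)\to v_t^\cE(\zeta)$ for $t>0$, which is delicate because $v_t^{\cE_\epsilon}$ is itself defined through a Gaussian integral at scale $t$ and the $\epsilon\to 0$ limit must be interchanged with this integral uniformly along large-scale samples. For $\cE=\Lv$ this is relatively clean: the integrand is positive, monotonicity of the exponential gives easy uniform bounds, and Lemma~\ref{lem:wick-exponential-convergence} applies conditionally on $\zeta$. For $\cE=\ShG$ the signed character of the hyperbolic sine obstructs a direct chaos comparison, and I would instead deploy the Brascamp--Lieb estimate of Proposition~\ref{prop:density-tggf-tshg} to secure exponential moments of the sinh-Gordon potential uniformly in $\epsilon$, enabling a dominated-convergence exchange before the chaos limit is taken.
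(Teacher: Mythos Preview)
Your overall strategy---pass to the limit in the finite-$\epsilon$ formula \eqref{eq:lvshg-renormalised-measure} using Lemma~\ref{lem:wick-exponential-convergence} for the chaos and standard convergence of the lattice large-scale Gaussian---is the same as the paper's. But the paper sidesteps precisely the obstacle you flag as the main one, and in doing so eliminates the need for any separate treatment of the sinh-Gordon case.

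The key observation you are missing is that one should unwind the definition of $v_t^{\cE_\epsilon}$ \emph{before} taking the limit rather than try to control $v_t^{\cE_\epsilon}(\zeta)\to v_t^{\cE}(\zeta)$ directly. Since $Y_\infty^\epsilon - Y_t^\epsilon$ is $\cF^t$-measurable and $Y_t^\epsilon$ is independent of $\cF^t$, the semigroup property gives
\[
e^{-v_t^{\cE_\epsilon}(Y_\infty^\epsilon - Y_t^\epsilon)}
= \E\bigl[e^{-v_0^{\cE_\epsilon}(Y_\infty^\epsilon)} \bigm| \cF^t\bigr],
\]
and hence by the tower property
\[
\E\bigl[F\bigl(I_\epsilon(Y_\infty^\epsilon - Y_t^\epsilon)\bigr) e^{-v_t^{\cE_\epsilon}(Y_\infty^\epsilon - Y_t^\epsilon)}\bigr]
= \E\bigl[F\bigl(I_\epsilon(Y_\infty^\epsilon - Y_t^\epsilon)\bigr) e^{-v_0^{\cE_\epsilon}(Y_\infty^\epsilon)}\bigr].
\]
Now the integrand involves only $v_0^{\cE_\epsilon}(Y_\infty^\epsilon)$, which is exactly $\lambda M^\epsilon$ (resp.\ $\lambda\bar M^\epsilon$), so Lemma~\ref{lem:wick-exponential-convergence} applies directly, and joint convergence with $I_\epsilon(Y_\infty^\epsilon - Y_t^\epsilon)\to Y_\infty - Y_t$ in $H^\alpha$, $\alpha<1$, finishes the argument by dominated convergence (the exponential is in $[0,1]$). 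The sinh-Gordon case is handled identically to Liouville---no Brascamp--Lieb, no conditional chaos estimates, no interchange of limits inside the scale-$t$ Gaussian integral. Your proposed route through convergence of $v_t^{\cE_\epsilon}$ itself is plausible but substantially harder than necessary, and the uniformity you would need over large-scale samples $\zeta$ is not obviously supplied by the tools you cite.
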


\begin{proof}
We first recall from \eqref{eq:lvshg-renormalised-measure} that the renormalised measure $\nu_t^{\cE_\epsilon}$ is defined by
\begin{align}
\E_{\nu_t^{\cE_\epsilon}}[F]
=
e^{v_\infty^{\cE_\epsilon}(0)}  \EE_{c_\infty^\epsilon - c_t^\epsilon} [F(\zeta) e^{-v_t^{\cE_\epsilon}(\zeta)}  ] 
=
{ e^{v_\infty^{\cE_\epsilon}(0)} } \E[F(Y^{\epsilon}_{\infty}-Y^{\epsilon}_{t})   e^{-v^{\cE_\epsilon}_{t}(Y^{\epsilon}_{\infty}-Y^{\epsilon}_{t})}],
\end{align}
where $F\colon X_\epsilon \to \R$ is bounded and continuous,
and $v_t^{\cE_\epsilon}$ is the renormalised potential.
By the definition of the pushforward measure and the renormalised potential we obtain that,
for $F\colon H^\alpha(\Omega)\to\R$ bounded and continuous,
\begin{align}
\E_{(I_{\epsilon})_*\nu^{\cE_\epsilon}_{t}}[F]
&= \E_{\nu_t^{\cE_\epsilon}} [F \circ I_\epsilon] 
=  e^{v_\infty^{\cE_\epsilon}(0)} \E [F \big( I_\epsilon (Y_\infty^\epsilon - Y_t^\epsilon) \big)  e^{-v_t^{\cE_\epsilon}(Y_\infty^\epsilon - Y_t^\epsilon)} ] =  
\nnb
&= { e^{v_\infty^{\cE_\epsilon}(0)} } \E\Big[ F\big(I_{\epsilon}(Y_\infty^\epsilon -Y_t^\epsilon)\big) 
\E \big[e^{- v_0^{\cE_\epsilon}(Y_\infty^\epsilon - Y_t^\epsilon + Y_t^\epsilon) } \bigm| \cF^t \big] \Big]
\nnb
&= e^{v_\infty^{\cE_\epsilon}(0)}  \E\Big[F\big(I_\epsilon(Y_\infty^\epsilon - Y_t^\epsilon)\big) e^{-v_0^{\cE_\epsilon}(Y_\infty^\epsilon)} \Big].
\end{align}
For $t>0$, we have by \cite[Lemma 5.4]{MR4665719} that $I_{\epsilon}(Y^{\epsilon}_{\infty}-Y^{\epsilon}_{t})$ converges to $Y_{\infty}-Y_{t}$ in $L^2$ with respect to the norm of $H^\alpha(\Omega)$ for any $\alpha <1$.
Moreover, we have by Lemma \ref{lem:wick-exponential-convergence} that $v_0^{\cE_\epsilon} (Y_\infty^\epsilon) \to v_0^{\cE_0} (Y_\infty)$ in $L^1$.
Since $v_0^{\cE_\epsilon} \geq 0$, it follows that
\begin{equation}
\label{eq:vitali-convergence-l1}
\E \Big [F\big(I_\epsilon(Y_\infty^\epsilon - Y_t^\epsilon) \big) e^{-v_0^{\cE_\epsilon} (Y_\infty^{\epsilon})} \Big]
\to
\E\Big[F(Y_\infty^\epsilon - Y_t^\epsilon) e^{-v_0^{\cE_0} (Y_\infty^\epsilon)} \Big],
\end{equation}
from which the claimed convergence follows.

For the case $t=0$ we follow the same arguments as for $t>0$, but now we take $F\colon H^\alpha (\Omega) \to \R$ for $\alpha <0$ and use \cite[(5.4)]{MR4665719}. 
\end{proof}

\begin{proof}[Proof of Corollary \ref{cor:pphi-coupling-continuum}]
Since $\Phi^{\Delta_{\epsilon_k}} \to \Phi^{\Delta_{0}}$ in distribution as $k\to \infty$,
we also have that there exists a process $\Phi_t^{\cE_0} \equiv \Phi^{\Delta_0} + \Phi^{\GFF_0}$,
such that $\Phi^{\cE_{\epsilon_k}} \to \Phi^{\cE_{0}}$ in distribution as $k\to \infty$. 
Moreover, as $\epsilon\to 0$,
we have, for any $t\geq 0$, that $\nu_t^{\cE_\epsilon} \to \nu_t^{\cE}$ where $\nu_t^{\cE}$ denotes the distribution in \eqref{eq:nu-t-continuum}.
In particular, $(\Phi_0^{\cE_\epsilon} )_\epsilon$ converges in distribution to the continuum Liouville measure (for $\cE = \Lv$) and sinh-Gordon measure (for $\cE= \ShG$).

Finally, the estimates on the norms of $ \Phi^{\Delta_0}$ and the independence of $\Phi_t^{\cE_0}$ and $\Phi_0^{\GFF_0} - \Phi_t^{\GFF_0}$ follow from the convergence in distribution  and the uniform bounds on the level of regularisations in Theorem \ref{thm:coupling-pphi-to-gff-eps}.
\end{proof}

\appendix

\section{Existence of a solutions to the Polchinski SDE for $\epsilon>0$}
\label{app:proof-sde-existence}

In this section we show that the SDE \eqref{eq:lvshg-coupling-eps} has a unique solution  and that the marginals for a fixed $t\geq 0$ are distributed as the renormalised measure in \eqref{eq:lvshg-renormalised-measure}.
Since $\epsilon >0$ is fixed, we drop it from the notation througout the entire section.
Note that, with the convention to be interpreted as a backward SDE, the differential form of \eqref{eq:lvshg-coupling-eps} reads
\begin{equation}
\label{eq:lvshg-sde-differential-form}
d\Phi_t = - \dot c_t \nabla v_t^\cE(\Phi_t) + \dot c_t^{1/2} dW_t, \qquad \Phi_\infty = 0. 
\end{equation}

\begin{proof}[Proof of Proposition \ref{prop:lvshg-coupling-eps}]

For $\cE = \Lv$ the existence and uniqueness of a solution to  \eqref{eq:lvshg-sde-differential-form} was established in \cite[Theorem 3.1]{HofstetterZeitouni2025Liouville} for a different covariance regularisation even for $\epsilon=0$.
We can follow the same argument, which is based on Picard iterations, to obtain this result also for the choice $(c_t)_{t\in [0,\infty]}$ as in \eqref{eq:ct-pauli-villars}.
Note that, for this choice, $c_t$ has positive entries and thus, since $\dot c_t = c_t^2 t^{-2}$,
the same holds for $\dot c_t$.
In particular, the $\dot c_t$ is positivity preserving as an operator $X_\epsilon \to X_\epsilon$, from which the determisistic sign of $\Phi^{\Delta_\epsilon}$ follows.

Thus, it remains to discuss the case $\cE= \ShG$.
To prove existence of a solution up to $t=0$,
it is convenient to consider the square of the $L^2$ norm of the process $q_t \Phi_t^{\cE}$ where 
\begin{equation}
\label{eq:sob-norm-t}
q_t = (\dot c_t)^{-1/2} = t (c_t)^{-1}
= t (-\Delta+ m^2) + 1 .
\end{equation}
For clarity we further note that
\begin{equation}
\|q_t \Phi_t^\ShG  \|_{L^2}^2 = \Phi_t^\ShG \dot c_t^{-1} \Phi_t^\ShG.
\end{equation}

We first prove that the SDE \eqref{eq:lvshg-sde-differential-form} has a solution up to $t=0$.
To this end, we define, for $n\in \N_0$, random variables $T_n \in [0,\infty]$ by
\begin{equation}
T_n = \sup \{ t\geq 0 \colon g_t \|q_t \Phi_t^\ShG \|_{L^2}^2 \geq n \}
\end{equation}
for a positive and differentiable function $g\colon [0,\infty) \mapsto \R$ with $g_t \to 0$ as $t\to \infty$ to be determined below.
Furthermore, we define $T_\infty = \lim_{n\to \infty} T_n$, which is well-defined, since $(T_n)_n$ is decreasing as $n\to \infty$.
Note that $(T_n)_n$ and $T_\infty$ are stopping times with respect to the backward filtration $\cF^t$.

In what follows, we show that $T_\infty = 0$ a.s., which implies that a global to \eqref{eq:lvshg-sde-differential-form} solution exists a.s.
To this end, we apply the local It\^o formula to $f(t\vee T_n,\Phi_{t\vee T_n}^\ShG)$,
where the function $f\colon [0,\infty)\times X_\epsilon \to \R$ is defined by 
\begin{equation}
f(t,\Phi) 
= g_t
\Phi (\dot c_t)^{-1} \Phi = g_t \| q_t \Phi \|_{L^2}^2.
\end{equation}
Noting that 
\begin{align}
\label{eq:f-derivatives}
\nabla f = 2 g_t \dot c_t^{-1} \Phi ,
\qquad
\frac{\partial f}{\partial t}
=  - g_t \Phi \ddot c_t (\dot c_t)^{-2} \Phi + g_t'\Phi  (\dot c_t)^{-1} \Phi , 
\qquad
\He f = 2 g_t \dot c_t^{-1},
\end{align}
it follows that, using also $g_\infty = 0$ and $\Phi_\infty =0$,
\begin{align} 
\label{eq:ito-ftphi-integrated}
f(t\vee T_n, \Phi_{t\vee T_n}^\ShG)
&= - 2  \int_{t\vee T_n}^\infty  g_s \dot c_s^{-1} \Phi_s^\ShG   \dot c_s \nabla v_s^\ShG(\Phi_s^\ShG)  ds 
+ \int_{t\vee T_n}^\infty g_s \Phi_s^\ShG \ddot c_s \dot c_s^{-2} \Phi_s^\ShG ds
\nnb
& \qquad - \int_{t\vee T_n}^\infty g_s' \| q_s \Phi_s^\ShG \|_{L^2}^2 \, ds
+ \int_{t\vee T_n}^\infty \tr(\id) g_s \, ds
\nnb
&\qquad + 2 \int_{t\vee T_n}^\infty g_s (\dot c_s^{-1} \Phi_s^\ShG) \cdot \dot c_s^{1/2} d W_s,
\end{align}
where $\tr(\id)$ denotes the trace of the identity operator on $X_\epsilon$ and the signs of the second and third integral on the right hand side of the previous display are reversed compared to \eqref{eq:f-derivatives},
since \eqref{eq:lvshg-sde-differential-form} is understood as a backward SDE.
We emphasise that $\tr(\id) = O(\epsilon^{-2})$, and thus, the bounds below are not uniform in $\epsilon>0$.
By the mean value theorem, for any $\Phi \in X_\epsilon$ there is $\bar \Phi \in X_\epsilon$,
such that
\begin{equation}
\nabla v_s^\ShG (\Phi) = \nabla v_s^\ShG (\Phi) - \nabla v_s^\ShG(0) = \He v_s^\ShG (\bar \Phi) \Phi.
\end{equation}
Using this together with Lemma \ref{lem:lvshg-he-v-pos-definite} for $\cE = \ShG$ the finite variation integrals in \eqref{eq:ito-ftphi-integrated} can be estimated as follows:
\begin{align}
\label{eq:lvshg-sde-existence-finite-variation-integrals-upper}
- & 2  \int_{t\vee T_n}^\infty g_s  \big[ \dot c_s^{-1} \Phi_s^\ShG \dot c_s \nabla v_s^\ShG(\tilde \Phi_s^\ShG) -  \frac{1}{2}\Phi_s^\ShG \ddot c_s \dot c_s^{-2} \Phi_s^\ShG \big ]ds
- \int_{t\vee T_n}^\infty g_s' \|q_s \Phi_s^\ShG\|^2 ds 
\nnb 
&= - 2 \int_{t\vee T_n}^\infty g_s (\dot c_s^{-1} \Phi_s^\ShG) \big[ \dot c_s \He v_s^\ShG (\bar\Phi_s) \dot c_s -\frac{1}{2} \ddot c_s \big]  (\dot c_s^{-1} \Phi_s^\ShG) ds 
- \int_{t\vee T_n}^\infty g_s' \| q_s \Phi_s^\ShG \|^2 ds
\nnb
&\leq \int_{t\vee T_n}^\infty g_s (\dot c_s^{-1} \Phi_s^\ShG)  \ddot c_s (\dot c_s^{-1}\Phi_s^\ShG) \, ds
- \int_{t\vee T_n}^\infty g_s' \| q_s \Phi_s^\ShG\|^2 ds
\nnb
&=  \int_{t\vee T_n}^\infty g_s  q_s \Phi_s^\ShG \ddot c_s c_s^{-1} q_s \Phi_s^\ShG \, ds
- \int_{t\vee T_n}^\infty g_s' \| q_s \Phi_s^\ShG\|^2 ds
\nnb
&\leq  - 2 \int_{t\vee T_n}^\infty ( \frac{m^2}{sm^2+1} + \frac{1}{2}\frac{g_s'}{g_s} ) g_s \| \Phi_s^\ShG \|_{q_s}^2 \, ds 
.
\end{align}
In the last line, we used that
\begin{equation}
\ddot c_s c_s^{-1} = - \frac{2}{s} (-\Delta + m^2) c_s \dot c_s \dot c_s^{-1}
= -2 (-\Delta + m^2) (s (-\Delta + m^2) +\id)^{-1} \leq - 2 \frac{m^2}{s m^2 + 1} \id,
\end{equation}
which holds by the monotonicity of the function $x \mapsto \frac{x}{sx+1}$.
With the choice
\begin{equation}
\label{eq:choice-of-g}
g_s = \frac{1}{(s m^2+1)^2 } \implies \frac{m^2}{sm^2+1} +  \frac{1}{2}\frac{g_t'}{g_t} =0,
\end{equation}
the integral on the right hand side of \eqref{eq:lvshg-sde-existence-finite-variation-integrals-upper} vanishes,
and we obtain from \eqref{eq:ito-ftphi-integrated}
\begin{align}
\label{eq:ftphi-upper-bound}
f(t\vee T_n,\Phi_{t\vee T_n}^\ShG) 
&\leq \int_{t\vee T_n}^\infty \tr(\id) g_s ds
+ 2 \int_{t \vee T_n}^\infty (\dot c_s^{-1} \Phi_s^\ShG) g_s \cdot \dot c_s^{1/2} d W_s
\nnb
& \leq  \tr(\id) \int_t^\infty g_s ds 
+ 2 \int_{t \vee T_n}^\infty (\dot c_s^{-1} \Phi_s^\ShG) g_s \cdot \dot c_s^{1/2} d W_s.
\end{align}
Now, taking expectation, the stochastic integral on the right hand side of \eqref{eq:ftphi-upper-bound} vanishes thanks to the presence of $T_n$ and we obtain
\begin{align}
\label{eq:expectation-ftphi-upper}
\E \big[ f(t\vee T_n,\Phi_{t\vee T_n}^\ShG)  \big] 
&\leq \tr(\id) \int_t^\infty g_s ds
\leq 
\tr(\id) \frac{1}{m^2(t m^2 +1)} .
\end{align}
It follows that
\begin{equation}
\E \big[ \mathbf{1}_{\{T_\infty >0\}} f(t\vee T_n, \Phi_{t\vee T_n}^\ShG) \big] \lesssim \frac{1}{m^2\epsilon^2}
\end{equation}
and thus, taking $t\to 0$, we have by the dominated convergence theorem
\begin{equation}
\E \big[ \mathbf{1}_{\{T_\infty >0\}} f(T_n, \Phi_{T_n}^\ShG) \big] = \P(T_\infty >0) n
 \leq \frac{1}{m^2\epsilon^2} ,
\end{equation}
which, when $n\to \infty$, allows us to conclude $\P(T_\infty >0) = 0$.

To prove pathwise uniqueness,
we let $\Phi_t^\ShG$ and $\tilde \Phi_t^\ShG$ be two solutions to \eqref{eq:lvshg-sde-differential-form} and define
\begin{equation}
S_n = \sup \{ t\geq 0 \colon g_t \|  q_t ( \Phi_t^\ShG - \tilde \Phi_t^\ShG) \|_{L^2}^2 \geq n^{-1}\}.
\end{equation}
Moreover, we define $S_\infty = \lim_{n\to \infty} S_n$, which is well-defined as $S_n$ is increasing.
Note that $(S_n)_n$ and $S_\infty$ are stopping times with respect to the backward filtration $\cF^t$.
We first observe that
\begin{equation}
\label{eq:inclusion-for-pathwise-uniqueness}
\bigcap_{n\in \N} \{S_n =\infty  \} \subseteq \{ \Phi_t^{\ShG} = \tilde \Phi_t^{\ShG} \text{ for all } t\geq 0  \}
,
\end{equation}
and thus, pathwise uniqueness follows, once we showed that 
the event on the left hand side of \eqref{eq:inclusion-for-pathwise-uniqueness} has probability one.
Using similar calculations as the ones leading to \eqref{eq:ftphi-upper-bound}, we obtain
\begin{align}
\label{eq:ftphi-tphi-upper-bound}
f(t\vee S_n,\Phi_{t\vee S_n}^\ShG - \tilde \Phi_{t\vee S_n}^\ShG ) 
\leq 2 \int_{t \vee S_n}^\infty \dot c_s^{-1}( \Phi_s^\ShG -\tilde  \Phi_s^\ShG)g_s \cdot \dot c_s^{1/2} d W_s . 
\end{align}
Taking expectation, the stochastic integral vanishes by a similar reasoning as above \eqref{eq:expectation-ftphi-upper}, and we obtain
\begin{equation}
\E \big[ f(t\vee S_n , \Phi_{t\vee S_n}^\ShG - \tilde \Phi_{t\vee S_n}^\ShG  \big]  = 0.
\end{equation}
It follows that, when $t\to 0$,
\begin{equation}
\P(S_n < \infty) n^{-1} = 0 \qquad \implies \qquad \P(S_n < \infty) = 0.
\end{equation}
Since $(S_n)_n$ is increasing as $n\to \infty$, we conclude that
\begin{equation}
\P \big( \bigcap_{n\in \N}\{ S_n = \infty\} \big) = 1
\end{equation}
as needed.
\end{proof}


\begin{proof}[Proof of Proposition \ref{prop:law-polchinski-sde}]
We use a similar argument as in the proof of Proposition \ref{prop:lvshg-coupling-eps}.
Let $\Phi_t^{\ShG,T}$ be solution to the backward SDE
\begin{equation}
\label{eq:lvshg-backward-SDE-T}
d \Phi_t = - \dot c_t \nabla v_t^\ShG(\Phi_t) dt + \dot c_t^{1/2} dW_t, \qquad t\in [0,T], \qquad \Phi_T \sim \nu_T^\ShG.
\end{equation}
Using the same argument as in the proof of Proposition \ref{prop:lvshg-coupling-eps} it can be shown that a unique solution to \eqref{eq:lvshg-backward-SDE-T}, henceforth denoted $\Phi^{\ShG,T}$, exists.
Then, by similar arguments as in \cite[Proposition 2.1]{MR4303014}, we have that $\Phi_t^{\ShG,T} \sim \nu_t^\ShG$ for all $t\leq T$.
The ergodicity assumption \cite[(2.5)]{MR4303014} can be proved by elementary means in this case,
but is not needed to conclude this, for which we omit its proof here.
In what follows, we show that, for any $t\geq 0$, we have $\Phi_t^{\ShG,T} \to \Phi_t^\ShG$ in probability as $T\to \infty$.
Let $S_n$ be defined by
\begin{equation}
S_n = \sup\{ t\leq T\colon g_t\|q_t (\Phi_t^\ShG - \Phi_t^{\ShG,T})  \|_{L^2}^2 \geq n \} .
\end{equation}
Using the same arguments as in the proof of existence in Proposition \ref{prop:lvshg-coupling-eps},
now with $\Phi_t^\ShG - \Phi_t^{\ShG,T}$ in place of $\Phi_t^{\ShG}$, we have
\begin{align} 
\label{eq:ito-ftphi-phiT-integrated}
f(t\vee S_n, \Phi_{t\vee S_n}^\ShG - \Phi_t^{\ShG,T})
&-
f(T\vee S_n, \Phi_{T\vee S_n}^\ShG - \Phi_{T\vee S_n}^{\ShG,T} )
\nnb
&= - 2  \int_{t\vee S_n}^T  g_s \dot c_s^{-1} \big( \Phi_s^\ShG - \Phi_s^{\ShG,T} ) \big(  \dot c_s \nabla v_s^\ShG(\Phi_s^\ShG) - \dot c_s \nabla v_s^{\ShG}(\Phi_s^{\ShG,T}) \big) ds
\nnb
&\qquad + \int_{t\vee S_n}^T g_s (\Phi_s^\ShG - \Phi_s^{\ShG,T} )\ddot c_s \dot c_s^{-2} (\Phi_s^\ShG - \Phi_s^{\ShG,T} ) ds
\nnb
& \qquad - \int_{t\vee S_n}^T g_s' \| q_s (\Phi_s^\ShG - \Phi_s^{\ShG,T}) \|_{L^2}^2 \, ds
\nnb
&\qquad + 2 \int_{t\vee S_n}^T g_s \big(\dot c_s^{-1} (\Phi_s^\ShG-\Phi_s^{\ShG,T})\big) \cdot \dot c_s^{1/2} d W_s
\nnb
&\leq 2 \int_{t\vee S_n}^T g_s \big(\dot c_s^{-1}( \Phi_s^\ShG  -\Phi_s^{\ShG,T})\big) \cdot \dot c_s^{1/2} d W_s .
\end{align}
Taking the expectation, the stochastic integral vanishes thanks to the stopping time, and we obtain
\begin{equation}
\E\big[ f(t\vee S_n, \Phi_{t\vee S_n}^\ShG - \Phi_{t\vee S_n}^{\ShG,T}) \big]
\leq
\E \big[ f(T\vee S_n, \Phi_{T\vee S_n}^\ShG - \Phi_{T\vee S_n}^{\ShG,T} )\big] .
\end{equation}
Similarly as before, we have that $S_\infty=0$ a.s., and thus, we have, as $n\to \infty$, from the monotone convergence theorem
\begin{equation}
\E\big[ f(t, \Phi_t^\ShG - \Phi_t^{\ShG,T}) \big]
\leq
\E \big[ f(T, \Phi_T^\ShG - \Phi_T^{\ShG,T} )\big] .
\end{equation}
Now, we use the triangle inequality, the Brascamp-Lieb inequality and the observation that
\begin{equation}
\E[\Phi_T^\GFF \dot c_T^{-1} \Phi_T^\GFF] = O(T\epsilon^{-2})
\end{equation}
as well as \eqref{eq:choice-of-g} to conclude that, as $T\to \infty$,
\begin{equation}
\E \big[ f(T, \Phi_T^\ShG - \Phi_T^{\ShG,T} )\big] \to 0 .
\end{equation}
In total, we have that, as $T\to \infty$ and for any $t\geq 0$
\begin{equation}
\E\big[ f(t, \Phi_t^\ShG - \Phi_t^{\ShG,T}) \big] \to 0.
\end{equation}
Since $g_t > 0$ for any $t\geq 0$, it follows that $\Phi_t^{\ShG,T} \to \Phi_t^\ShG$ in probability as $T\to \infty$.

\end{proof}

\section{Regularity of the multiplicative chaos}
\label{app:wickexp-regularity}

In this section we give the proofs of Lemma \ref{lem:sobolev-norms-liouville-finite} and Lemma \ref{lem:sobolev-norms-sinh-finite}.
For $\cE=\Lv$ this result is standard, and can be for instance extracted from \cite{MR4238209}.
Here we give a proof which can be adjusted to the case $\cE= \ShG$ with minor modifications thanks to the Brascamp-Lieb inequality and Proposition \ref{prop:density-tggf-tshg}.

Below, we let $\PhiGFF\sim \nu^{\GFF_\epsilon}$ and $\PhitShG \sim \nu^{\ShG_\epsilon,t}$ for $t\geq 0$,
where the latter probability distribution is as in Proposition \ref{prop:density-tggf-tshg}.
For $x\in \Omega_\epsilon$ we define
\begin{align}
\MGFFeps(x) &= e^{\sqrt{\beta}\PhiGFF_x - \frac{\beta}{2} c_\infty(x,x) },
\\
\MpmShGeps(x) &= e^{\sqrt{\beta}\PhitShG_x - \frac{\beta}{2} c_\infty(x,x) },
\end{align}
where we recall that, as $\epsilon \to 0$,
\begin{equation}
c_\infty^\epsilon(x,x) = \frac{1}{2\pi} \log \frac{1}{\epsilon} + O(1).
\end{equation}

\begin{proof}[Proof of Lemma \ref{lem:sobolev-norms-liouville-finite} and Lemma \ref{lem:sobolev-norms-sinh-finite}]

Let $\chi, \tilde \chi \in C^\infty_c(\R^2,[0,1])$,  such that 
\begin{equation}
\supp \tilde \chi \subseteq B_{4/3}, \qquad \supp \chi \subseteq B_{4/3}\setminus B_{3/8},
\end{equation}
where $B_r = {x\in \R^2\colon |x|\leq r}$ denots the Euclidean ball in $\R^2$ centred at the origin, and
\begin{equation}
\tilde \chi(x) + \sum_{j=0}^\infty \chi(x/2^j) = 1, \qquad x\in \R^2 .
\end{equation}
In what follows, we write
\begin{equation}
\label{eq:partition-unity}
\chi_{-1} = \tilde \chi,  \quad \chi_{j} = \chi(\cdot/2^j), \,\,\, j \geq 0,
\end{equation}
and note that $(\chi_j)_{j\geq -1}$ is a dyadic partition of unity with $\sup \chi_i \cap \sup \chi_j = \emptyset$ if $|i-j|>1$ .
For $\epsilon>0$ define $j_\epsilon = \max \{ j \geq -1 \colon  \supp \chi_j  \subset (-\pi/\epsilon, \pi/\epsilon]^2 \}$.
Note that for $j \geq j_\epsilon$, $\supp \chi_j$ may intersect $\partial \{ [-\pi/\epsilon, \pi/\epsilon]^2 \}$.
To avoid ambiguities with the periodisation of $\chi_j$ onto $\Omega_\epsilon^*$, we modify our dyadic partition of unity in \eqref{eq:partition-unity} as follows:
for $j \in \{ -1, \ldots, j_\epsilon\}$ let $\chi_j^\epsilon \in C^\infty_c(\R^2, [0,1])$ be such that for $k \in \Omega_\epsilon^*$ we have
\begin{equation}
\chi_j^\epsilon (k)
=
\begin{cases}
\chi_j(k), \,\qquad & j < j_\epsilon,
\\ 
1- \sum_{j < j_{\epsilon}} \chi_j^\epsilon(k), \,\qquad & j = j_\epsilon,
\\
0, \,\qquad & j >j_\epsilon.
\end{cases}	
\end{equation}
Then we define for $j\geq -1$ the $j$-th Fourier projector $\Delta_j$ by
\begin{equation}
\Delta_j^\epsilon f = \cF^{-1} (\chi_j^\epsilon \hat f),
\end{equation}
where $\hat f \colon \Omega_\epsilon^* \to \C, \, k\mapsto \hat f(k)$ is the Fourier transform of $f$.
By Parseval's identity and the shift invariance of the multiplicative chaos we then have
\begin{align}
\label{eq:lv-sobolev-wick-exp} 
\E \big[\| \wick{\exp(\sqrt{\beta}Y_\infty^\epsilon)}_\epsilon \|_{H^{-1+\delta}(\Omega_\epsilon)}^2 \big]
& \lesssim
\sum_{j=-1}^{\infty} 2^{-2 j (1-\delta) } \E \big[\|\Delta_j^\epsilon \wick{\exp(\sqrt{\beta}Y_\infty^\epsilon)}_\epsilon \|_{L^2(\Omega_\epsilon)}^2 \big]
\nnb
&=     
\sum_{j=-1}^{\infty}2^{-2j (1-\delta)} \Big(\E\big[\big|\big(\Delta_j^\epsilon\wick{\exp(\sqrt{\beta} Y_\infty^\epsilon)}_\epsilon \big)(0)\big|^{2}\big]\Big)
\nnb
&= 
\sum_{j=-1}^{\infty}2^{-2j (1-\delta)}\E \Big[\int_{\Omega_\epsilon\times\Omega_\epsilon} K_j^\epsilon(x)K_j^\epsilon (y)
e^{\sqrt{\beta}(Y_\infty^\epsilon(x) + Y_\infty^\epsilon (y) ) - \frac{\beta}{2\pi} c_\infty^\epsilon(x,x)}  dx dy \Big]
\nnb
&\lesssim
\sum_{j=-1}^{\infty}2^{-2 j (1-\delta)}
\Big(\int_{\Omega_\epsilon \times\Omega_\epsilon}
K_j^\epsilon (x)K_j^\epsilon (y) |x-y|^{-\beta/2\pi} dx dy \Big).
\end{align}
In the third line, we denoted the real valued kernel of $\Delta_j^\epsilon$ by $K_j^\epsilon$. 
Recall that we have $\|K_j^\epsilon\|_{L^{1}(\Omega_\epsilon)}\lesssim 1$ and $\|K_j^\epsilon\|_{L^{\infty}(\Omega_\epsilon)}\lesssim 2^{2j}$ uniformly in $\epsilon>0$.
Moreover, we have that the function $h\colon \Omega_\epsilon \to \R$, $h(x) = |x|^{- \beta/2\pi}$ is in $L^{1+\rho}(\Omega_\epsilon)$ for any $\rho>0$ such that $\rho\beta/4\pi < 1 - \beta/4\pi$ in the sense that
\begin{equation}
\sup_{\epsilon>0} \| h \|_{L^{1+\rho}(\Omega_\epsilon)} < \infty  .
\end{equation}
Thus, we have by Young's inequality for the integral on the right hand side of \eqref{eq:lv-sobolev-wick-exp} for any $\rho>0$ as above
\begin{equation}
\Big|\int_{\Omega_\epsilon \times\Omega_\epsilon}
K_j(x)K_j(y) |x-y|^{-\beta/4\pi} dx dy \Big|
\leq
\| K_j^\epsilon \|_{L^1} \| K_j^\epsilon \|_{L^{1+1/\rho}} \| h\|_{L^{1+\rho}} .
\end{equation}
By interpolation, we have that $\|K_j^\epsilon\|_{L^{1+1/\rho}}\lesssim 2^{2j(1-\rho/(1+\rho))}$.
We now choose $\rho>0$ such that
\begin{equation}
\delta < \rho/(\rho+1) \qquad \text{and}\qquad \rho \beta/4\pi < 1- \beta/4\pi.
\end{equation}
Note that such a choice is always possible. Indeed, the two conditions are equivalent to
\begin{align}
\frac{1}{1-\delta}-1 < \rho \qquad &\text{and} \qquad \rho < \frac{4\pi}{\beta} -1,
\end{align}
which is equivalent to $\rho/(\rho+1) \in (\delta, 1-\beta/4\pi)$. The last open interval is non-empty and the function $\rho \mapsto \rho/(\rho+1)$ attains every value in $(0,1)$ for $\rho >0$.

We conclude from \eqref{eq:lv-sobolev-wick-exp} that
\begin{equation}
\E \big[\| \wick{\exp(\sqrt{\beta}Y_\infty^\epsilon)}_\epsilon \|_{H^{-1+\delta}(\Omega_\epsilon)}^2 \big]
\lesssim \sum_{j=-1}^\infty 2^{2j(1-\delta)} 2^{2j(1-\rho/(\rho+1))} < \infty,
\end{equation}
thereby completing the proof.

For $\cE = \ShG$, we use the same argument together with the Brascamp-Lieb inequality for exponential moments in \eqref{eq:lv-sobolev-wick-exp}.
\end{proof}

\section*{Acknowledgements}
The author thanks Francesco De Vecchi and Seiichiro Kusuoka for helpful communications on the regularity of Wick ordered exponentials,
and furthermore Francesco De Vecchi for additional comments on a preliminary version of the manuscript.

This work was partially carried out during an academic visit at the University of Vienna.
The author appreciates the hospitality.

\bibliography{exp_l2.bbl}
\end{document}